\documentclass[11pt]{amsart}

\usepackage{geometry}
\usepackage{amssymb}
\usepackage{mathtools}
\usepackage{xcolor}
\usepackage[colorlinks=true,citecolor=red,linkcolor=blue,urlcolor=red]{hyperref}

\numberwithin{equation}{section}

\DeclareMathOperator{\vol}{vol}
\DeclareMathOperator{\conv}{conv}
\DeclareMathOperator{\ord}{ord}
\DeclareMathOperator{\aff}{aff}
\DeclareMathOperator{\intr}{int}
\DeclareMathOperator{\GL}{GL}

\newtheorem{thm}{Theorem}[section]
\newtheorem{lem}[thm]{Lemma}
\newtheorem{prop}[thm]{Proposition}
\newtheorem{cor}[thm]{Corollary}

\newtheorem{conj}[thm]{Conjecture}

\theoremstyle{definition}

\newtheorem{rem}[thm]{Remark}

\begin{document}

\title{The equality case of Ehrhart's volume conjecture}

\author{Jihao Liu}
\address{Department of Mathematics, Peking University, No. 5 Yiheyuan Road, Haidian District, Beijing 100871, China}
\address{Beijing International Center for Mathematical Research, Peking University, No. 5 Yiheyuan Road, Haidian District, Beijing 100871, China}
\email{liujihao@math.pku.edu.cn}

\subjclass[2020]{11H06, 52A40, 52B20, 32A25}
\keywords{Ehrhart's volume conjecture, lattice points, convex bodies, critical lattices, Haj\'os's theorem, Bergman kernels, Pr\'ekopa's inequality}
\date{\today}

\begin{abstract}
We prove that every full-dimensional compact convex body in $\mathbb R^n$ whose barycenter is its unique interior lattice point and whose volume is $(n+1)^n/n!$ is a unimodular image of the simplex $(n+1)\Delta_n-(1,\dots,1)$. This resolves the equality case of Ehrhart's volume conjecture, as a counterpart of the inequality part recently proved by OpenAI. The main result of this paper is obtained by generative AI, particularly GPT-5.6-sol, Fable 5, and the Danus system.
\end{abstract}

\maketitle

\setcounter{tocdepth}{1}
\tableofcontents

\section{Introduction}\label{sec:introduction}

In 1964, Ehrhart \cite{Ehr64} proposed the following generalization of Minkowski's fundamental theorem (see, e.g., \cite[Section~5]{GL87}); we state it in the formulation given by Nill and Paffenholz \cite[Conjecture~1.1]{NP14}. Let $\Delta_n=\conv\{0,e_1,\dots,e_n\}\subset\mathbb R^n$ be the standard simplex.

\begin{conj}[Ehrhart's volume conjecture, {\cite{Ehr64}}, as formulated in {\cite[Conjecture~1.1]{NP14}}]\label{conj:ehrhart}
Let $K\subset\mathbb R^n$ be an $n$-dimensional convex body whose barycenter is the origin. If the origin is the only interior lattice point of $K$, then
\[
\vol(K)\leq\frac{(n+1)^n}{n!},
\]
where equality holds if and only if $K$ is unimodularly equivalent to $(n+1)\Delta_n$.
\end{conj}

Ehrhart proved the inequality in dimension $2$ \cite{Ehr55a} and for simplices in every dimension \cite{Ehr79} (see \cite{NP14}). For an arbitrary centered body $K$ as in Conjecture~\ref{conj:ehrhart}, the Milman--Pajor symmetrization inequality \cite[Corollary~3(2)]{MP00} together with Minkowski's theorem gives $\vol(K)\leq 4^n$ \cite[p.~312]{HHH16}, and this bound was subsequently improved to $4^ne^{-c\sqrt n}$ by thin-shell estimates \cite[Proposition~6.2]{HST+22}, and to $4^ne^{-cn}$ by combining the lower bound of \cite[Theorem~4.1]{CHM+24} on the volume of $K\cap(-K)$ in terms of the isotropic constant with the boundedness of the isotropic constant supplied by the resolution of Bourgain's slicing problem \cite[Theorem~1.2]{KL25}, where $c>0$ denotes a positive absolute constant, not necessarily the same at each occurrence; all of these bounds remain exponentially far from the conjectured one. Berman and Berndtsson proved the inequality for centered rational polytopes admitting a primitive facet presentation with all facet constants at most $1$, and for bodies in the positive orthant with barycenter $(1,\dots,1)$, by complex-analytic methods \cite[Corollary~1.4 and Theorem~1.5]{BB17}; a derivation of the latter case from Gr\"unbaum's inequality \cite[Theorem~2 and its proof]{Gru60} is recorded in \cite[Remark~3.2]{BB17}. Nill and Paffenholz proved the inequality for convex bodies contained in the polar of a lattice polytope and classified the equality case in that setting \cite[Theorem~1.4]{NP14}, and formulated the equality refinement for general convex bodies as \cite[Conjecture~1.1]{NP14}.

On August 1, 2026, OpenAI released a public report presenting ten advances in mathematics and theoretical computer science, obtained by an internal version of its Astra model \cite{OAI26}. The eighth result in the collection is the inequality part of Ehrhart's volume conjecture:

\begin{thm}[{\cite[Chapter~8, Theorem~1.1]{OAI26}}]\label{thm:oai}
Let $K\subset\mathbb R^n$ be a full-dimensional compact convex body with barycenter $0$. If $\intr(K)\cap\mathbb Z^n=\{0\}$, then
\[
\vol(K)\leq\frac{(n+1)^n}{n!}.
\]
\end{thm}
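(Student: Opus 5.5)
The plan is to follow the complex-analytic route of Berman and Berndtsson \cite{BB17}, which the keywords of this paper (Bergman kernels, Pr\'ekopa's inequality) also point to. The goal is to remove the hypothesis on facet presentations in \cite[Corollary~1.4]{BB17}, using only the lattice hypothesis $\intr(K)\cap\mathbb Z^n=\{0\}$ and the barycenter condition.

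\textbf{Step 1 (set-up).} Let
\[
L_K(y)=\log\int_K e^{\langle x,y\rangle}\,dx .
\]
This is a smooth, strictly convex function on $\mathbb R^n$. Its gradient $\nabla L_K$ is a diffeomorphism of $\mathbb R^n$ onto $\intr(K)$, so
\[
\int_{\mathbb R^n}\det D^2L_K=\vol(K).
\]
The barycenter condition says exactly that $\nabla L_K(0)=0$, i.e.\ $0$ is the minimum point of $L_K$. Jensen's inequality then gives $L_K\ge \log\vol(K)$.

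Pull $L_K$ back to $(\mathbb C^*)^n$ as the torus-invariant weight $\phi(z)=L_K(\log|z_1|,\dots,\log|z_n|)$. A Laurent monomial $z^m$ lies in the weighted Bergman space $A^2(e^{-\phi})$, with measure $dV/|z|^2$, exactly when $m$ lies in the interior of the moment polytope. Here the moment polytope is $K$ (or $-K$, depending on sign conventions). The hypothesis $\intr(K)\cap\mathbb Z^n=\{0\}$ therefore says that $A^2$ is spanned by the constant function. Consequently the Bergman kernel on the diagonal is explicit:
\[
B_\phi(z)=\frac{1}{(2\pi)^n\int_{\mathbb R^n}e^{-L_K}}.
\]
This identity is the only place the lattice condition enters.

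\textbf{Step 2 (lower bound for the Bergman kernel).} Next I want to bound $B_\phi(1)$ from below by a quantity controlled by $\vol(K)$ and the Monge--Amp\`ere mass. I would do this by an Ohsawa--Takegoshi type extension from the point $1$. An alternative is the Berndtsson positivity of direct images, made concrete by Pr\'ekopa's inequality: marginals of log-concave densities are log-concave. Applying Pr\'ekopa to the family of densities $x\mapsto e^{\langle x,y\rangle}\mathbf 1_K(x)$ should give an inequality of the form
\[
\int_{\mathbb R^n}e^{-(L_K(y)-L_K(0))}\,dy\;\ge\; c_n\,\frac{1}{\vol(K)}\cdot(\text{Legendre-dual volume term}).
\]
The constant $c_n$ is to be computed on the extremal simplex $(n+1)\Delta_n-(1,\dots,1)$, where $L_K$ is explicit and every step should be an equality. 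Writing the right-hand side in terms of the Legendre transform $L_K^*$, which is finite exactly on $K$, and using $\int_K e^{-L_K^*}$, I would then combine with the upper bound from Step~1. This should yield $\vol(K)\le (n+1)^n/n!$. The value $(n+1)^n/n!$ should arise as the volume of $K$ times the ratio $\sup e^{-L_K^*}/\text{average}$, which Gr\"unbaum-type arguments bound by $((n+1)/n)^n$-type factors; compare \cite[Remark~3.2]{BB17}.

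\textbf{Main obstacle.} The hard step is Step~2 without the facet hypothesis. In \cite{BB17} the condition that all facet constants are at most $1$ is used to compare $L_K$ with the log of a toric anticanonical metric, i.e.\ a K\"ahler--Einstein-type comparison. For a general $K$ no such polarized toric variety is available. My first attempt would be to replace that comparison by an approximation argument. I would approximate $K$ from inside by rational polytopes $K_j$ with barycenter $0$. Since $\intr(K_j)\subset\intr(K)$, these inherit $\intr(K_j)\cap\mathbb Z^n=\{0\}$. I would then run the Bergman-kernel identity on each $K_j$, with a $\bar\partial$-estimate whose weight is $\phi$ itself rather than an anticanonical metric. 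The estimate would use only the convexity of $L_K$ and the fact that $\nabla L_K(0)=0$, so that the facet constants never appear. Passing to the limit $j\to\infty$ then recovers $K$, since volume and barycenter are continuous in the Hausdorff metric.

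If this fails, the fallback is a Gr\"unbaum-style reduction. For each primitive dual vector $u$, the barycenter condition forces the slabs $\{|\langle u,x\rangle|<1\}$ to cut off at most a controlled fraction of $K$. I would combine these slab constraints with the simplex case proved by Ehrhart \cite{Ehr79}, via a Steiner-symmetrization or Knothe-map argument that preserves both the barycenter and the lattice-point condition along a chosen flag.
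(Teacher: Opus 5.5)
Your Step~1 is essentially sound, up to a convention: with $\phi(z)=L_K(\log|z_1|,\dots,\log|z_n|)$ the monomial criterion reads $2m\in\intr(K)$, so you should use $\log|z_i|^2$. You also correctly isolate the only use of the lattice hypothesis, namely that the weighted space is $\mathbb C$. But Step~2 is the whole theorem, and none of the mechanisms you list supplies it.

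Pr\'ekopa does not apply to $e^{\langle x,y\rangle}\mathbf 1_K(x)$: the function $(x,y)\mapsto\langle x,y\rangle$ is not concave, and in fact the marginal $e^{L_K}$ is log-\emph{convex}. A static bound $B_\phi(1)\ge c_ne^{\phi(1)}$ of Ohsawa--Takegoshi type only yields, via $B_\phi\equiv(\int e^{-\phi}\,d\nu)^{-1}$ and $\phi(1)=\log\vol(K)$, an upper bound on $\vol(K)\int e^{-L_K}$. That quantity dominates $n!\vol(K^\circ)$ and scales like $\lambda^{-n}$ under $K\mapsto\lambda K$, so such a bound pushes in the wrong direction.

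More fundamentally, $L_K$ is the wrong weight. The squeeze closes only for a weight $\varphi$ with $\det D^2\varphi=e^{-\varphi}$ (Theorem~\ref{thm:bb13}); this is where the barycenter hypothesis is really used. The reason is that Bergman measures converge to the normalized Monge--Amp\`ere measure, while the slope of a partition function is computed against the normalized $e^{-\varphi}\,d\nu$. These two measures coincide only for that potential (Lemmas~\ref{lem:potential} and~\ref{lem:bergmanconv}).

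The rational approximation is idle. The facet hypothesis of \cite{BB17} is not a rationality condition, since inner rational approximants need not have primitive facet constants at most $1$. A $\bar\partial$-estimate that avoided it would apply to $K$ directly, and that estimate is exactly what is missing. Your fallback fails as well, because Steiner symmetrization and Knothe maps do not preserve $\intr(K)\cap\mathbb Z^n=\{0\}$.

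The missing idea is a one-parameter family of weights acquiring a singularity at a point $p$ of the torus, whose partition function is pinched between $nt$ and a volume-sensitive lower bound on its initial slope. The paper does not prove the statement itself: it cites \cite{OAI26}, whose argument is the one run in Section~\ref{sec:gateway}, where the volume hypothesis enters only in the final lower bound. There the volume is seen at high levels $k$, by comparing $d_k=\#(\mathbb Z^n\cap\intr(kK))\sim\vol(K)k^n$ with the $\binom{n+j-1}{n}$ jet conditions at $p$. Lemma~\ref{lem:jet} gives $\dim F^j_{k,p}\ge d_k-\binom{n+j-1}{n}$, hence $\liminf_kA_k\ge(n+1)-\frac{(n+1)^n}{n!\vol(K)}$ for the normalized jet sums.

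The filtrations define a plurisubharmonic envelope ray $\psi_{p,t}$ with $\psi_{p,0}=\varphi$ (Lemma~\ref{lem:envelope}). Because $H_1=\mathbb C$, Berndtsson's theorem \cite{Ber06} makes $L_p(t)=-\log\bigl(\vol(K)^{-1}\int_Xe^{-\psi_{p,t}}\,d\nu\bigr)$ convex (Lemma~\ref{lem:berndtsson}). The complex theorem is essential here: the base point breaks torus invariance, so Pr\'ekopa is unavailable. The Schwarz lemma on balls of radius $re^{-t/2}$ about $p$ gives $L_p(t)\le nt$ (Lemma~\ref{lem:schwarz}). Fatou's lemma together with $\|\mu_k-\mu\|_{\mathrm{TV}}\to0$ then gives $\limsup_kA_k\le L'_{p,+}(0)\le n$ (Proposition~\ref{prop:bridge}).

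Comparing the two bounds on $A_k$ yields $\vol(K)\le(n+1)^n/n!$. Your proposal contains neither the ray nor the jet count, so as it stands it does not reach the inequality.
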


The strategy of \cite{OAI26} for Theorem~\ref{thm:oai} can be considered as a convex-geometry analogue of Fujita's inequality $(-K_X)^d\leq(d+1)^d$ for K-semistable Fano manifolds $X$ of dimension $d$ \cite{Fuj18}, and its streamline follows the structure of the analytic method of Berman and Berndtsson \cite{BB17}; both debts are recorded in \cite{OAI26} itself, which uses the same point filtration and vanishing-order count as the proof of \cite[Theorem~5.1]{Fuj18} (the filtration machinery is developed in \cite[Section~4]{Fuj18}) and the one-dimensional Bergman mechanism of \cite[Theorem~2.3]{BB17}. In this frame, the barycenter hypothesis supplies a real Monge--Amp\`ere potential \cite{BB13} playing the role of a K\"ahler--Einstein metric, and the unique interior lattice point makes the first weighted Bergman space one-dimensional, so that the convexity supplied by \cite{Ber06} acts on a scalar partition function; the point filtration, squeezed against a Schwarz-lemma estimate, then yields the sharp bound. In the rational-polytope setting of \cite{BB17}, Bergman convexity was combined with a Moser--Trudinger inequality at a torus-fixed point; \cite{OAI26} replaces that half of the argument by Fujita's vanishing-order count, and the rationality restriction disappears. Closely related finite-level jet counts and saturation arguments appear on the Fano side in the work of Zhang \cite{Zha22,Zha25}; Remark~\ref{rem:dictionary} collects these parallels.

The equality part of Ehrhart's volume conjecture, namely the assertion that equality holds only for unimodular images of the simplex under an affine automorphism of the lattice $\mathbb Z^n$, that is, an integer linear map of determinant $1$ or $-1$ followed by a translation by an integer vector, was, however, left unresolved in \cite{OAI26}. We remark that \cite{OAI26} referred to \cite[Conjecture~1.1]{NP14} for the equality refinement; according to \cite{NP14}, the uniqueness question was already suggested in \cite{Ehr64}, and it was proved in dimension $2$ in \cite{Ehr55b}.

On the other hand, Fujita \cite[Theorem~1.1]{Fuj18} proved the corresponding equality case in the smooth K\"ahler--Einstein setting: when a K\"ahler--Einstein Fano manifold satisfies $(-K_X)^d=(d+1)^d$, one must have $X\cong\mathbb P^d$. The equality characterization was subsequently extended to singular $\mathbb Q$-Fano varieties. More precisely, if $X$ is an $n$-dimensional K-semistable $\mathbb Q$-Fano variety and $(-K_X)^n=(n+1)^n$, then $X\cong\mathbb P^n$ \cite[Theorems~13(2) and~36]{Liu18}. Liu and Zhuang obtained the corresponding Ding-semistable maximal-volume characterization from their Seshadri-constant criterion for projective space \cite[Theorems~2 and~10]{LZ18}. It is therefore natural to ask whether Fujita's equality analysis can be translated to convex geometry in a similar way, so as to completely resolve the equality part of Ehrhart's volume conjecture. The main theorem of this paper is that this is indeed the case.

\begin{thm}\label{thm:main}
Let $n$ be a positive integer and let $K\subset\mathbb R^n$ be a full-dimensional compact convex body with barycenter $0$, such that $\intr(K)\cap\mathbb Z^n=\{0\}$ and
\[
\vol(K)=\frac{(n+1)^n}{n!}.
\]
Then $K=U\bigl((n+1)\Delta_n-(1,\dots,1)\bigr)$ for some $U\in\GL_n(\mathbb Z)$.
\end{thm}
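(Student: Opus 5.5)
We will trace the proof of Theorem~\ref{thm:oai} and force every inequality in it to be an equality. The argument has four steps.

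\emph{Step 1: the jet count gives a support function identity.} We work with the filtration functions
\[
\phi_v(K)=\max_{x\in K}\langle v,x\rangle ,
\]
for primitive $v\in\mathbb Z^n$. This is the convex-geometry analogue of the point filtration at a torus-fixed point. In the OAI argument, the bound $\vol(K)\le (n+1)^n/n!$ comes from combining two inequalities:
\begin{itemize}
\item a lower bound on vanishing orders, coming from Bergman convexity applied to the one-dimensional space spanned by the monomial at the unique interior lattice point $0$;
\item the barycenter identity $\int_K\langle v,x\rangle\,dx=0$.
\end{itemize}
Together these give, for each direction $v$, an inequality comparing $\phi_v(K)$ and $\phi_{-v}(K)$ with the volume. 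Following Fujita's equality analysis (and Liu's Theorem~36 in the $\mathbb Q$-Fano case), I expect equality in the volume bound to force equality in the one-direction Gr\"unbaum-type estimate. The target statement is: for every $v$ with $\langle v,x\rangle$ attaining its minimum $-1$ on $K$ (the lattice-width-type normalization), we have $\phi_v(K)=n$. The mechanism is Gr\"unbaum's inequality. The ratio of the two support values about the barycenter is at most $n$, with equality only when $K$ is a cone over a facet with apex in direction $v$.

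\emph{Step 2: extract facets.} Equality in Gr\"unbaum's inequality in direction $v$ means the following:
\begin{itemize}
\item $K$ is a cone with base $F_v=K\cap\{\langle v,x\rangle=-1\}$ and apex on $\{\langle v,x\rangle=n\}$;
\item the section volume function $t\mapsto \vol_{n-1}(K\cap\{\langle v,x\rangle=t\})$ is exactly $c\,(n-t)^{n-1}$.
\end{itemize}
I plan to apply this to the primitive lattice vectors $v$ that are outer normals certifying that $0$ is the only interior lattice point. Concretely, I will choose a direction where $K$ is tight against $-1$; such a direction exists because $K$ contains no interior lattice point besides $0$. Then I iterate inside the base $F_v$, which is an $(n-1)$-dimensional body with barycenter on the hyperplane. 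Projecting along the apex direction preserves the hypotheses, since the projected body's barycenter and lattice conditions are inherited. By induction on $n$, with the base case $n=1$ where $K=[-1,1]\cdot$ is forced to be $[-1,1]$, we get that $F_v$ is a unimodular simplex of the correct size. This yields the cone structure over an $(n-1)$-simplex.

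\emph{Step 3: identify the simplex.} A cone over a lattice $(n-1)$-simplex with apex on the hyperplane at height $n$, having barycenter $0$ and volume $(n+1)^n/n!$, has its vertices determined up to a shear. The remaining freedom is the apex position. I pin it down with two conditions:
\begin{itemize}
\item Uniqueness of the interior lattice point forces the apex to be a lattice point, because otherwise nearby interior lattice points appear. I would check this by looking at the edges of the cone from the apex, in the spirit of Nill--Paffenholz Theorem~1.4.
\item The barycenter condition fixes the apex modulo lattice translations in the hyperplane. A shear in $\GL_n(\mathbb Z)$ then moves it to the standard position, giving $K=U((n+1)\Delta_n-(1,\dots,1))$.
\end{itemize}

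\emph{Main obstacle.} The hard step is Step 1: showing that equality in the OAI chain propagates to the support function identity in at least one lattice direction. The OAI argument uses Pr\'ekopa/Berndtsson convexity and a Schwarz-lemma estimate, and each of these has its own equality case. Equality in Pr\'ekopa's inequality forces the weighted measures along fibres to be translates, which is a log-affine condition. I would first try to convert this translate condition into the affine section-volume statement needed for Gr\"unbaum equality, mirroring Fujita's conclusion that the Seshadri constant at a point equals $n+1$. That equality is then what yields $\mathbb P^n$, via Liu--Zhuang.
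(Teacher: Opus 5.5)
Your proposal has a genuine gap: Step~1 carries the whole theorem, and you only assert it. The inequality of \cite{OAI26} involves no direction $v$ and no Gr\"unbaum-type estimate. It squeezes the jet-count lower bound, which is the only place the volume enters, against the Schwarz-lemma bound $L_p(t)\leq nt$. That bound is for the vanishing-order filtration at a point $p$ of the open torus $(\mathbb C^*)^n$; there are no torus-fixed points in this setting. The barycenter enters only through the potential $\varphi$ of \cite{BB13}. The ratio bound $h_K(v)\leq n\,h_K(-v)$ holds for every centered body whatever its volume, so equality in the volume bound has no visible route to equality there. Moreover, the point-filtration rays are not torus-invariant, so the real Pr\'ekopa--Leindler equality case cannot be applied to them.

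The shape you guess is right: Theorem~\ref{thm:pyramid} is exactly a cone statement with $h_K(c)=n$ and $h_K(-c)=1$. Reaching it takes:
\begin{itemize}
\item the all-point saturation $L_p(t)=nt$ (Theorem~\ref{thm:gateway});
\item the hypersurface exclusion (Theorem~\ref{thm:hypersurface});
\item a radial degeneration of the base point along a generic $v$ to monomial filtrations with integer weights (Proposition~\ref{prop:weights}), whose limit rays are torus-invariant with linear partition function (Theorem~\ref{thm:limitray});
\item Dubuc's equality case \cite{Dub77}, which makes these translation rays (Proposition~\ref{prop:prekopa});
\item an empirical-measure argument combining the barycenter with the cap counts.
\end{itemize}
Even then the cone direction $c$ is only a real vector. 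Your claim that a primitive lattice $v$ with $\min_K\langle v,\cdot\rangle=-1$ exists ``because $0$ is the only interior lattice point'' is unfounded: for a centered ball of radius $2/5$ there is none. Obtaining an \emph{integral} cone direction is the arithmetic content of the problem. The paper gets that content differently: it first shows $K$ is a simplex from pyramid apices in a dense set of directions (Theorem~\ref{thm:densapex}), then applies the Haj\'os-based uniqueness of the critical lattice of $S_n$ (Theorem~\ref{thm:critical}).

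Step~2 fails even if Step~1 is granted, because the base does not inherit the hypotheses. Take $K=S_n$ and $v=e_1$. The base $S_n\cap\{x_1=-1\}$ is a translate of $(n+1)\Delta_{n-1}$. In the coordinates $x_2,\dots,x_n$ its barycenter is $\frac1n(1,\dots,1)$, it has $n$ relative-interior lattice points, and its volume is $(n+1)^{n-1}/(n-1)!$. The projection of $S_n$ along $e_1$ is the same set.

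If $v$ is primitive and integral, the right object for the induction is the central section $\Sigma=K\cap\{\langle v,x\rangle=0\}$. The barycenter of the cone equals that of $\Sigma$, the relative-interior lattice points of $\Sigma$ are interior lattice points of $K$, and $\Sigma$ has volume $n^{n-1}/(n-1)!$ with respect to $\mathbb Z^n\cap v^\perp$. So $\Sigma$ satisfies the hypotheses in dimension $n-1$.

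In this formulation, however, the barycenter condition holds automatically for every apex $a=(n,a')$ (taking $v=e_1$ and $\Sigma=\{0\}\times S_{n-1}$), so it cannot pin the apex as your Step~3 intends. What pins it is arithmetic. The section at height $1$ is $(n-1)\Delta_{n-1}+\frac1n\bigl(a'-(n-1)(1,\dots,1)\bigr)$, and its relative interior avoids $\mathbb Z^{n-1}$ only when this translation vector is integral. Once it is, a shear $(x_1,x')\mapsto(x_1,x'-x_1w)$ with $w\in\mathbb Z^{n-1}$ carries $K$ to $S_n$.

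So your endgame can be repaired, but only conditionally on an integral version of Step~1, which the proposal does not supply.
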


\begin{rem}\label{rem:conventions}
Theorem~\ref{thm:main} resolves \cite[Conjecture~1.1]{NP14}, and hence the equality part of Conjecture~\ref{conj:ehrhart}, in every dimension. The two formulations agree: if $K=U(n+1)\Delta_n+t$ is a unimodular image of $(n+1)\Delta_n$ under $x\mapsto Ux+t$ with $U\in\GL_n(\mathbb Z)$ and $t\in\mathbb Z^n$, then the barycenter hypothesis forces $t=-U(1,\dots,1)\in\mathbb Z^n$, so that $K=U\bigl((n+1)\Delta_n-(1,\dots,1)\bigr)$; conversely every body of the latter form is a unimodular image of $(n+1)\Delta_n$. Together with Theorem~\ref{thm:oai}, Theorem~\ref{thm:main} completes Conjecture~\ref{conj:ehrhart}.
\end{rem}

\subsection*{Sketch of the proof}

The argument for Theorem~\ref{thm:main} has two halves, one analytic and one arithmetic. Write $S_n=(n+1)\Delta_n-(1,\dots,1)$ for the centered standard simplex. The frame of the analytic half is that of \cite{OAI26}, whose results we use directly rather than reprove: the barycenter hypothesis supplies the real Monge--Amp\`ere potential $\varphi$ of \cite{BB13} (see also \cite{CK15}), the spaces of Laurent series on the complex torus $(\mathbb C^*)^n$ that are square-integrable against $e^{-k\varphi}$ have orthogonal monomial bases indexed by the lattice points of $\intr(kK)$ \cite[Chapter~8, Lemma~2.1]{OAI26}, and the unique interior lattice point makes the Bergman-kernel convexity supplied by \cite{Ber06} act on scalar partition functions. On top of this frame, the proof introduces the following ingredients, which are new.

First, the volume hypothesis pins the vanishing-order filtrations of these spaces at \emph{every} point of the torus simultaneously: the normalized partition function of the filtered envelope ray based at $p$ is exactly linear, $L_p(t)=nt$, for every base point $p$ (Theorem~\ref{thm:gateway}). The proof squeezes an elementary jet-count lower bound, which consumes the entire volume hypothesis, against a Schwarz-lemma upper bound. A cognate single-point jet count, with the log-canonical-threshold ceiling in place of the Schwarz bound, appears on the Fano side in \cite[Proposition~3.1]{Zha25}; the logical direction there is reversed: a local stability hypothesis bounds the section count, while here the volume hypothesis forces the saturation, at every base point.

Second, the saturation has discrete consequences: for every $\varepsilon>0$ and all large $k$, no nonzero polynomial of total degree at most $(n+1-\varepsilon)k$ vanishes at all lattice points of $\intr(kK)$ (Theorem~\ref{thm:hypersurface}), and every lattice width of $K$ is at least $n+1$ (Corollary~\ref{cor:width}). For the simplex the degree threshold is attained exactly, at $(n+1)k-n$ (Proposition~\ref{prop:simplexdelta}).

Third, and this is the key new device, the base point is degenerated radially along a generic direction $v$: a Pl\"ucker weight-polytope argument turns the point-jet filtrations into monomial filtrations with integer weights, a single generic direction working for all $k$ simultaneously (Lemma~\ref{lem:grassmann}, Proposition~\ref{prop:weights}), and the associated toric rays subconverge to rays with exactly linear partition functions (Theorem~\ref{thm:limitray}). The equality case of the Pr\'ekopa--Leindler inequality, due to Dubuc \cite{Dub77} (see also \cite{KW22}), forces every such limit ray to be a translation ray (Proposition~\ref{prop:prekopa}); a translation ray, in turn, forces $K$ to be a pyramid whose cap volumes are exactly $s^n/n!$ and whose apex minimizes the chosen direction (Theorems~\ref{thm:pyramid} and~\ref{thm:apex}); here the empirical measure of exponent--weight pairs concentrates on the affine roof determined by the translation vector. Since the good directions are dense, a convexity lemma shows that a full-dimensional body with minimizing pyramid apices in a dense set of directions is a simplex (Theorems~\ref{thm:densapex} and~\ref{thm:simplex}). The volume and barycenter hypotheses then exhibit $K=AS_n$ with $|\det A|=1$ (Proposition~\ref{prop:reduction}). For concave transforms and asymptotic measures associated with multiplicative filtrations, see \cite{BC11}; for geodesic rays associated with analytic test configurations, see \cite{RW14}.

Fourth, the arithmetic half is a critical-lattice theorem which we believe to be of independent interest: $\mathbb Z^n$ is the \emph{only} lattice of determinant one whose nonzero points all avoid $\intr(S_n)$ (Theorem~\ref{thm:critical}). Its proof is short: a sign-flip argument shows that such a lattice contains no nonzero point of the open unit cube; Haj\'os's theorem \cite{Haj41}, resolving Minkowski's conjecture on cube tilings, then makes the lattice upper unitriangular after a permutation of coordinates; and an explicit shear-point construction rules out every nontrivial shear. The value of the critical determinant of $S_n$ is classical \cite{Ehr79}; the uniqueness of the critical lattice appears to be new. Applying the theorem to $A^{-1}\mathbb Z^n$ proves $A\in\GL_n(\mathbb Z)$ and hence Theorem~\ref{thm:main}.

We remark that the starting point of this paper is the observation that \cite{OAI26} transports the proof of Fujita's inequality, while the same paper of Fujita settles the Fano-side equality case. The core of the equality analysis of \cite{Fuj18} is the saturation of Seshadri constants: equality forces the Seshadri constants of $-K_X$ to equal $n+1$ at every point, and the extremal $\mathbb P^n$ attains this value exactly. Under the correspondence, the role of the Seshadri constant is played by the lattice width, and the counterpart of the pointwise Seshadri saturation is the width saturation of Corollary~\ref{cor:width}: every lattice width of an equality body is at least $n+1$, and the extremal simplex attains the bound, its minimal lattice width being exactly $n+1$. For polarized toric varieties, Ambro and Ito gave a quantitative comparison between the Seshadri constant at a general point and the minimal lattice width of the moment polytope \cite[Theorem~0.1]{AI20}; see also the corrigendum \cite{AI23}. The first half of the proof (Sections~\ref{sec:gateway} and~\ref{sec:discrete}) realizes exactly this plan, through the all-point saturation of Theorem~\ref{thm:gateway}. Theorem~\ref{thm:main} is, however, not obtained by translating the equality analysis of \cite{Fuj18} step by step: the mechanisms of the second half (the radial degeneration to monomial filtrations, the Pr\'ekopa--Leindler rigidity, and the critical-lattice theorem) have no counterparts in the K-stability argument, whose projective-geometric inputs, principally the geometry of rational curves, in turn have no convex-geometric analogues.

\begin{rem}[A dictionary]\label{rem:dictionary}
The correspondence behind this paper extends beyond the Seshadri--width pairing described above. The table below collects the parallels known to us, as a heuristic dictionary with pointers; the left column lives on Fano manifolds, the right column on convex bodies.

\begin{center}\small
\begin{tabular}{@{}p{0.46\textwidth}p{0.46\textwidth}@{}}
\hline
\emph{Fano side} & \emph{convex side}\\
\hline
K-semistable Fano $X$; $\vol(-K_X)$ \cite{BB17,OAI26} & centered $K$ with unique interior lattice point; $n!\,\vol(K)$\\[2pt]
sections of $-mK_X$ & lattice points of $\intr\bigl((m{+}1)K\bigr)$ (Lemma~\ref{lem:monomial})\\[2pt]
point filtration and vanishing-order count \cite[Section~4 and Theorem~5.1]{Fuj18} & jet filtrations $F^j_{k,p}$ (Section~\ref{sec:gateway})\\[2pt]
log-canonical-threshold ceiling for basis divisors \cite[Proposition~3.1]{Zha25} & Bergman--Schwarz slope bound $L_p(t)\leq nt$ (Lemmas~\ref{lem:berndtsson} and~\ref{lem:schwarz}); both are integrability thresholds\\[2pt]
basis-divisor vanishing orders; the invariant $S_L(E)$ \cite[Sections~2--3]{Zha22}, \cite[Section~3]{Zha25} & normalized jet sums $A_k$ (Theorem~\ref{thm:gateway}); cap-volume profile (Theorem~\ref{thm:pyramid})\\[2pt]
separation of jets at equality \cite[Proposition~3.2]{Zha25} & hypersurface exclusion and unisolvence (Theorem~\ref{thm:hypersurface}, Proposition~\ref{prop:simplexdelta})\\[2pt]
Seshadri-constant saturation $\varepsilon_p(-K_X)=n+1$ \cite[Theorem~2.3]{Fuj18} & lattice-width saturation $w_u(K)\geq n+1$ (Corollary~\ref{cor:width}); the Ambro--Ito comparison between Seshadri constants and minimal lattice width \cite[Theorem~0.1]{AI20} (see also the corrigendum \cite{AI23})\\[2pt]
infinitesimal Newton--Okounkov body and its cone rigidity \cite[Proposition~4.6]{Zha22} & roof function, cap homothety, and the pyramid (Theorem~\ref{thm:pyramid})\\[2pt]
Berndtsson convexity \cite[Theorem~1.1]{Ber06} & Pr\'ekopa--Leindler equality \cite[Th\'eor\`eme~12]{Dub77} (Proposition~\ref{prop:prekopa})\\[2pt]
characterizations of $\mathbb P^n$ (\cite[Theorem~1.1]{Fuj18}; \cite[Theorems~13(2) and~36]{Liu18}; \cite[Theorems~2 and~10]{LZ18}; \cite[Theorem~3.3]{Zha25}; \cite{LM25}) & no counterpart; replaced by dense apices and the critical lattice (Sections~\ref{sec:simplex} and~\ref{sec:critical})\\
\hline
\end{tabular}
\end{center}

The last row marks the boundary of the dictionary: every known Fano-side equality analysis exits through a characterization of $\mathbb P^n$, and none of these characterizations has a convex-geometric counterpart; conversely, the arithmetic endgame of Section~\ref{sec:critical} has no established complex-geometric avatar. It is an interesting question whether Theorem~\ref{thm:critical} corresponds to a rigidity property of toric degenerations of $\mathbb P^n$, and, in the opposite direction, whether the mechanisms of Sections~\ref{sec:rays}--\ref{sec:critical} admit complex-geometric counterparts under this dictionary.
\end{rem}

\medskip

\noindent\textbf{Structure of the paper.} Section~\ref{sec:preliminaries} collects the conventions, the transport potential of \cite{BB13}, the weighted Laurent spaces, and the elementary convexity and measure lemmas used throughout. Section~\ref{sec:gateway} proves the sharp jet-sum theorem: the filtered partition functions are exactly linear at every base point. Section~\ref{sec:discrete} deduces the hypersurface exclusion and the width bounds. Section~\ref{sec:rays} constructs the toric limit rays, applies the Pr\'ekopa--Leindler equality characterization, and proves the pyramid structure. Section~\ref{sec:simplex} proves that every equality body is a simplex. Section~\ref{sec:critical} proves the critical-lattice theorem for $S_n$, and Section~\ref{sec:proof} concludes.

\begin{rem}\label{rem:ai}
The main result of this paper was obtained using generative AI, particularly GPT-5.6-sol, Fable 5, and the Danus system. Danus is a specialized agent built on the Rethlas system and substantially more capable of conducting fundamental mathematical research. See \cite{Liu+26} and \cite{Ju+26} for detailed introductions to the Danus system and the Rethlas system, respectively.

The route to Theorem~\ref{thm:main} is, however, not completely automatic. The key human input from the author is the observation that the method of \cite{OAI26} essentially relies on the strategy of \cite{Fuj18}, whereas on the K-stability side the corresponding equality case (a K\"ahler--Einstein Fano manifold with $(-K_X)^d=(d+1)^d$ is isomorphic to $\mathbb P^d$) was proved in the same paper. Since the proof in \cite{OAI26} essentially relies on the ideas of the proof of \cite{Fuj18}, it was reasonable to expect that Theorem~\ref{thm:main} could be proved under a similar correspondence; in particular, the author suggested attacking the lattice-width saturation as the counterpart of the Seshadri-constant saturation in \cite{Fuj18}. This observation was given to the Danus system, which carried out the first half of the proof along the suggested correspondence, found the mechanisms of the second half, for which no correspondence was available, on its own, and produced the complete proof, internally verified by the system's proof-checking pipeline, in $3$ hours and $16$ minutes. Discussions with Fable 5 and GPT-5.6-sol-ultra also assisted the author in quickly grasping the methodology of the proof of \cite[Chapter~8, Theorem~1.1]{OAI26}. Human verification and polishing were done afterwards. We do not know whether the Danus system, at intermediate steps of its search, found further references related to the correspondences of Remark~\ref{rem:dictionary}, such as \cite{Zha22,Zha25}, and applied them in the proof presented here; the parallels with \cite{Zha22,Zha25} collected in Remark~\ref{rem:dictionary} were identified by the author only after the proof was completed. The references \cite{AI20,Per00} and their relation to the correspondences of Remark~\ref{rem:dictionary} were brought to the author's attention by Florin Ambro after the first version of this paper was posted. Due to the limitation of generative AI, it is possible that we have missed some related references in the literature, and we welcome any comments from experts.
\end{rem}

\subsection*{Acknowledgements}
The work was partially supported by the National Key R\&D Program of China \#\allowbreak 2024YFA1014400.
The author would like to thank other members of the Danus team (namely Guoxiong Gao, Zeming Sun, Bin Wu, Shurui Liu, Jiedong Jiang, Haocheng Ju, Leheng Chen, Ronnie Cheng, Xiping Zhang, and Bin Dong) and the Rethlas team (namely Haocheng Ju, Jiedong Jiang, Shurui Liu, Guoxiong Gao, Yuefeng Wang, Zeming Sun, Bin Wu, Liang Xiao, and Bin Dong) for their contributions to the development of Danus and Rethlas.
The author would like to thank Kewei Zhang for bringing the references \cite{Zha22,Zha25} to his attention and for useful discussions on the correspondences in Remark~\ref{rem:dictionary}.
The author would like to thank Florin Ambro for helpful comments on an earlier version and for bringing \cite{AI20,Per00} to his attention.
The author would like to thank Ruochuan Liu and Gang Tian for constant support and encouragement.

\section{Preliminaries}\label{sec:preliminaries}

In this section, we fix notation, collect the external inputs of the proof, and prove the elementary convexity and measure lemmas used throughout the paper. The Fano-side counterpart of this frame is the K\"ahler--Einstein and Bergman-kernel setting of \cite{BB13,BB17,OAI26}; see Remark~\ref{rem:dictionary}.

\subsection{Notation and conventions}\label{subsec:notation}

We work in $\mathbb R^n$ with the standard lattice $\mathbb Z^n$ and Lebesgue measure $\vol$; we write $e_1,\dots,e_n$ for the standard basis of $\mathbb R^n$, and $\langle\cdot,\cdot\rangle$ and $|\cdot|$ for the standard Euclidean pairing and norm. A \emph{convex body} $K\subset\mathbb R^n$ is a compact convex set; it is \emph{full-dimensional} if its interior $\intr(K)$ is nonempty. The \emph{barycenter} of a full-dimensional convex body $K$ is $\frac{1}{\vol(K)}\int_Ky\,dy$. The \emph{support function} of $K$ is $h_K(x)=\max_{y\in K}\langle y,x\rangle$. For a nonzero $u\in\mathbb Z^n$, the \emph{lattice width} of $K$ in direction $u$ is
\[
w_u(K)=\max_{x\in K}\langle u,x\rangle-\min_{x\in K}\langle u,x\rangle.
\]
Throughout the paper we write
\[
S_n=(n+1)\Delta_n-(1,\dots,1),\qquad\text{where }\Delta_n=\conv\{0,e_1,\dots,e_n\},
\]
for the centered standard simplex, and, for a full-dimensional compact convex body $K$ and a positive integer $k$,
\[
S_k(K)=\mathbb Z^n\cap\intr(kK),\qquad d_k=\#S_k(K),
\]
suppressing $K$ from the notation when it is clear from the context. No confusion with the simplex $S_n$ will arise: the dilation parameter is always denoted $k$, the subscript $n$ is reserved for the dimension, and when the body is the simplex itself we write $S_k(S_n)$.

\begin{lem}\label{lem:model}
The centered standard simplex satisfies
\[
S_n=\{x\in\mathbb R^n:x_i\geq-1\text{ for every }i\text{ and }x_1+\dots+x_n\leq1\},
\]
and $\intr(S_n)$ is described by the corresponding strict inequalities. Moreover, $S_n$ is full-dimensional with barycenter $0$, $\vol(S_n)=(n+1)^n/n!$, and $\intr(S_n)\cap\mathbb Z^n=\{0\}$.
\end{lem}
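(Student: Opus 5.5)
The statement to prove is Lemma~\ref{lem:model}, which is elementary. I would verify each claim directly from the vertex description $S_n=\conv\{-\mathbf 1,\,-\mathbf 1+(n+1)e_1,\dots,-\mathbf 1+(n+1)e_n\}$, where $\mathbf 1=(1,\dots,1)$.

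\textbf{Step 1: inequality description.} First recall that
\[
\Delta_n=\{y\in\mathbb R^n: y_i\geq0,\ y_1+\dots+y_n\leq1\}.
\]
Scaling by $n+1$ gives $y_i\geq0$ and $\sum y_i\leq n+1$. Substituting $x=y-\mathbf 1$ turns $y_i\geq0$ into $x_i\geq-1$. It turns $\sum y_i\leq n+1$ into $\sum x_i\leq (n+1)-n=1$. The interior of a polyhedron given by finitely many affine inequalities, and having nonempty interior, is the set where all of them hold strictly. Nonemptiness is clear, since $x=0$ satisfies every inequality strictly.

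\textbf{Step 2: full-dimensionality, barycenter and volume.} Full-dimensionality follows from $0\in\intr(S_n)$. The volume is
\[
\vol(S_n)=(n+1)^n\vol(\Delta_n)=\frac{(n+1)^n}{n!}.
\]
The barycenter of a simplex is the average of its vertices. For $\Delta_n$ this average is $\frac{1}{n+1}\mathbf 1$, so for $(n+1)\Delta_n$ it is $\mathbf 1$. After the translation by $-\mathbf 1$ it becomes $0$.

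\textbf{Step 3: interior lattice points.} Let $x\in\mathbb Z^n$ satisfy the strict inequalities $x_i>-1$ and $\sum x_i<1$. Integrality upgrades these to $x_i\geq0$ and $\sum x_i\leq0$. Together they force every $x_i=0$. Conversely, $0$ is an interior point by Step~1.

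There is no real obstacle here. The only point needing a moment's care is the claim that the interior is exactly the strict-inequality set. It holds because each defining inequality is a nontrivial affine one: any point where one of them is an equality has points arbitrarily close by that violate it.
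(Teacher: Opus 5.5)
Your proof is correct and follows essentially the same route as the paper: the volume by scaling, the barycenter as the vertex mean, and the integrality argument for interior lattice points are the same. The only variation is in Step 1, where you push the standard halfspace description of $\Delta_n$ through the affine map $y\mapsto(n+1)y-(1,\dots,1)$, whereas the paper checks that every vertex satisfies the inequalities and that each inequality is tight on exactly $n$ vertices, which identifies the facet description.
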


\begin{proof}
The vertices of $S_n$ are $s_0=-(1,\dots,1)$ and $s_i=-(1,\dots,1)+(n+1)e_i$ for $1\leq i\leq n$. Each vertex satisfies the listed inequalities, and each of the $n+1$ inequalities is an equality on exactly $n$ of the vertices, so the listed halfspace description is the facet description of $S_n$ and the interior is given by the strict inequalities. The barycenter of a simplex is the arithmetic mean of its vertices, and $\sum_{i=0}^ns_i=-(n+1)(1,\dots,1)+(n+1)(1,\dots,1)=0$. The volume is $(n+1)^n\vol(\Delta_n)=(n+1)^n/n!$. Finally, if $x\in\intr(S_n)\cap\mathbb Z^n$, then $x_i>-1$ for every $i$ and $\sum_ix_i<1$; integrality gives $x_i\geq0$ for every $i$ and $\sum_ix_i\leq0$, hence $x=0$.
\end{proof}

On the complex torus $X=(\mathbb C^*)^n$ we use the logarithmic coordinates
\[
z_i=\exp\Bigl(\frac{x_i}{2}+i\theta_i\Bigr),\qquad x\in\mathbb R^n,\ \theta\in(\mathbb R/2\pi\mathbb Z)^n,
\]
with the angular measure $d\theta$ the Haar (product Lebesgue) measure of $(\mathbb R/2\pi\mathbb Z)^n$ normalized to have total mass one, and we set $d\nu=dx\,d\theta$. For a Laurent monomial $z^m=z_1^{m_1}\cdots z_n^{m_n}$ we have $|z^m|^2=e^{\langle m,x\rangle}$. A function on $X$ is \emph{torus-invariant} if it depends only on $x$; we identify torus-invariant functions on $X$ with functions on $\mathbb R^n$ without further comment.

\subsection{The transport potential and the weighted Laurent spaces}\label{subsec:transport}

The following existence theorem of Berman and Berndtsson is the analytic starting point; it plays the role of a K\"ahler--Einstein metric on the toric side.

\begin{thm}[{\cite[Theorem~1.1]{BB13}}]\label{thm:bb13}
Let $K\subset\mathbb R^n$ be a full-dimensional compact convex body with $0\in\intr(K)$ whose barycenter is $0$. Then there exists a smooth strictly convex function $\varphi\colon\mathbb R^n\to\mathbb R$ such that $\nabla\varphi$ is a diffeomorphism from $\mathbb R^n$ onto $\intr(K)$, $\det(D^2\varphi)=e^{-\varphi}$, and $\varphi-h_K$ is bounded on $\mathbb R^n$.
\end{thm}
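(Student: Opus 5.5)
The plan is to obtain $\varphi$ as the Legendre transform of the minimizer of a Ding-type functional on convex functions on $K$. The Euler--Lagrange equation is the optimal transport statement that $\nabla\varphi$ pushes $e^{-\varphi}dx$ forward to Lebesgue measure on $K$. This is the real, toric analogue of the Kähler--Einstein equation, and the barycenter hypothesis plays the role of Futaki-invariant vanishing (as in Wang--Zhu).

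\textbf{Setup.} Work in the Legendre-dual picture. Let $\mathcal C$ be the class of lower semicontinuous convex functions $\psi\colon K\to\mathbb R$ that are bounded. For $\psi\in\mathcal C$ set $\psi^*(x)=\sup_{y\in K}(\langle x,y\rangle-\psi(y))$ and define
\[
\mathcal F(\psi)=\frac{1}{\vol(K)}\int_K\psi\,dy+\log\int_{\mathbb R^n}e^{-\psi^*(x)}\,dx .
\]
Two elementary observations come first.
\begin{enumerate}
\item $\psi^*-h_K$ is bounded by $\sup|\psi|$. Since $0\in\intr(K)$, $h_K$ grows linearly, so the integral is finite.
\item $\mathcal F$ is invariant under $\psi\mapsto\psi+c$. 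It is also invariant under $\psi\mapsto\psi+\langle a,\cdot\rangle$: the integral term is invariant because $\psi^*$ is merely translated by $a$, and the linear term changes by $\langle a,\text{barycenter}\rangle=0$. This is exactly where the barycenter hypothesis enters.
\end{enumerate}

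\textbf{Convexity.} Along affine paths $\psi_t=(1-t)\psi_0+t\psi_1$ the first term of $\mathcal F$ is affine. The function $(x,t)\mapsto\psi_t^*(x)$ is jointly convex, so by Prékopa's theorem $t\mapsto-\log\int e^{-\psi_t^*}$ is convex. Hence $\mathcal F$ is convex. The equality case of Prékopa forces $\psi_1^*$ to be a translate of $\psi_0^*$ up to a constant, so $\mathcal F$ is strictly convex modulo affine functions.

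\textbf{Coercivity (main obstacle).} This is the step I expect to be hardest. Normalize $\psi\geq0$ with $\psi(0)=0$ and $\nabla\psi(0)=0$ in the subgradient sense, using the affine invariance. I would then show that $\mathcal F(\psi)\to\infty$ when $\int_K\psi\to\infty$, by comparing $\int e^{-\psi^*}$ with the volume of sublevel sets. Because $0$ is the barycenter, for the normalized $\psi$ one gets an estimate of the form $\log\int e^{-\psi^*}\geq-(1-\delta)\frac{1}{\vol K}\int_K\psi-C$. The intended argument is a Grünbaum/Fradelizi type bound: the barycenter is at depth comparable to $1/n$ in every direction, so mass cannot escape to one side. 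Given coercivity, the normalized minimizing sequences are locally uniformly bounded convex functions. They subconverge, and lower semicontinuity (Fatou on the integral term) yields a minimizer $\psi$.

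\textbf{Euler--Lagrange, regularity, conclusion.} Perturbing by $\psi+s f$ with $f$ continuous on $K$ gives
\[
\frac{1}{\vol K}\int_K f\,dy=\frac{\int f(\nabla\psi^*)e^{-\psi^*}dx}{\int e^{-\psi^*}dx},
\]
using the standard derivative of the Legendre transform a.e. Thus $\varphi=\psi^*+\text{const}$ satisfies $(\nabla\varphi)_*(e^{-\varphi}dx)=dy|_K$, i.e.\ $\varphi$ is a Brenier (Alexandrov) solution of $\det D^2\varphi=e^{-\varphi}$ with gradient image in $K$. Since the right-hand side is positive, locally bounded and bounded away from zero on compacts, and the target $K$ is convex, Caffarelli's regularity theory gives strict convexity and $C^{1,\alpha}$ regularity. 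Bootstrapping via Schauder estimates then gives smoothness, and $\nabla\varphi$ is a diffeomorphism onto $\intr(K)$. Finally, $\varphi-h_K$ is bounded because $\psi$ is bounded on $K$; this boundedness is part of the coercivity output, and $L^\infty$ control from the normalization can be upgraded to full boundedness using convexity on the compact set $K$.
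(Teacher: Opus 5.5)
The paper does not prove this statement; it imports it from \cite{BB13}, where it is proved by the variational route you sketch. Your version has two genuine gaps.

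\textbf{The functional has the wrong sign, and this breaks the step you call the main obstacle.} Since $(\psi+c)^*=\psi^*-c$, your functional satisfies $\mathcal F(\psi+c)=\mathcal F(\psi)+2c$. So it is not invariant, is unbounded below, and has no minimizer. Pr\'ekopa makes $t\mapsto\log\int e^{-\psi_t^*}$ concave, not convex. The variation of $\mathcal F$ in the direction $f\equiv1$ is $2$, so your displayed Euler--Lagrange identity is not that of $\mathcal F$. Your invariance, convexity and Euler--Lagrange claims are correct for
\[
\mathcal D(\psi)=\frac{1}{\vol(K)}\int_K\psi\,dy-\log\int_{\mathbb R^n}e^{-\psi^*}\,dx .
\]
For $\mathcal D$, however, the estimate you plan to prove is a \emph{lower} bound on $\log\int e^{-\psi^*}$. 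That only bounds $\mathcal D$ from above, and it is trivial for normalized $\psi$, since $\psi^*\le h_K$.

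Coercivity needs an \emph{upper} bound, which is elementary and uses no Gr\"unbaum or Fradelizi input:
\begin{enumerate}
\item If $\psi\geq0=\psi(0)$, then $\psi^*\geq0=\psi^*(0)$, so by convexity $\int e^{-\psi^*}\leq C_n\vol\{\psi^*\leq1\}$.
\item If $\psi^*(x)\leq1$, Fenchel--Young gives $\psi(y)\geq(\langle x,y\rangle-1)_+$ on $K$. Since $0\in\intr(K)$, this forces $|x|\leq C_K(1+\int_K\psi)$.
\item Hence $\mathcal D(\psi)\geq\frac{1}{\vol(K)}\int_K\psi-n\log(1+\int_K\psi)-C$.
\end{enumerate}
The barycenter hypothesis enters only through the affine invariance that permits the normalization. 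If $b\neq0$ is the barycenter, then $\mathcal D(\psi+\langle a,\cdot\rangle)=\mathcal D(\psi)+\langle a,b\rangle$ is unbounded below.

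\textbf{Boundedness of $\varphi-h_K$ does not follow from coercivity.} Coercivity controls $\int_K\psi$, not $\sup_K\psi$. Normalized convex functions with bounded integrals can blow up at $\partial K$: on $[-1,1]$, $\psi_j(y)=j^2(|y|-1+1/j)_+$ has integral $1$ and maximum $j$. So your class $\mathcal C$ of bounded functions is not closed under the limit you take. The minimizer must be sought among convex $\psi$ that are finite on $\intr(K)$ with $\int_K\psi<\infty$. For such $\psi$ only $\varphi\leq h_K+C$ is automatic. The reverse bound $\varphi\geq h_K-C$, i.e.\ boundedness of the minimizer up to $\partial K$, is a separate a priori $L^\infty$ estimate for the solution. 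It is the real counterpart of the $L^\infty$ estimates for K\"ahler--Einstein potentials, and ``convexity on the compact set $K$'' does not supply it.

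Your remaining steps are standard once the functional is corrected: differentiating the Legendre transform along $\psi+sf$ (harmless even though $\psi+sf$ leaves the convex class), Caffarelli regularity for the Brenier map onto a convex target, and the Schauder bootstrap.
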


\begin{lem}\label{lem:potential}
Let $K\subset\mathbb R^n$ be a full-dimensional compact convex body whose barycenter is $0$. Then $0\in\intr(K)$, and for $\varphi$ as in Theorem~\ref{thm:bb13} we have
\[
(\nabla\varphi)_*\bigl(e^{-\varphi(x)}dx\bigr)=\mathbf 1_K(y)\,dy,\qquad\int_{\mathbb R^n}e^{-\varphi}\,dx=\vol(K).
\]
We call $\varphi$ a \emph{BB13 potential} of $K$. See also \cite[Theorem~2]{CK15} for the underlying transport problem.
\end{lem}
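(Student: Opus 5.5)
The first claim, $0\in\intr(K)$, is elementary. The barycenter of a full-dimensional convex body always lies in its interior. If $0=\frac1{\vol(K)}\int_K y\,dy$ were on $\partial K$, take a supporting hyperplane at $0$, i.e.\ a nonzero $u$ with $\langle u,y\rangle\geq0$ for all $y\in K$. Then $\int_K\langle u,y\rangle\,dy=0$ forces $\langle u,y\rangle=0$ for almost every $y\in K$. This contradicts full-dimensionality, since $K$ has positive-measure intersection with the open halfspace $\{\langle u,y\rangle>0\}$. Hence Theorem~\ref{thm:bb13} applies and produces $\varphi$.

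For the pushforward identity, use the change of variables $y=\nabla\varphi(x)$. By Theorem~\ref{thm:bb13}, $\nabla\varphi$ is a diffeomorphism $\mathbb R^n\to\intr(K)$ with Jacobian $\det D^2\varphi(x)$, which is positive by strict convexity and smoothness (it equals $e^{-\varphi}>0$). Fix a bounded Borel function $g$ on $\mathbb R^n$. The change-of-variables formula for diffeomorphisms gives
\[
\int_{\intr(K)}g(y)\,dy=\int_{\mathbb R^n}g(\nabla\varphi(x))\det(D^2\varphi(x))\,dx=\int_{\mathbb R^n}g(\nabla\varphi(x))\,e^{-\varphi(x)}\,dx ,
\]
where the last step uses the Monge--Amp\`ere equation $\det(D^2\varphi)=e^{-\varphi}$. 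The boundary of a convex body has Lebesgue measure zero, so the left side equals $\int g\,\mathbf 1_K\,dy$. This is exactly the statement $(\nabla\varphi)_*(e^{-\varphi}dx)=\mathbf 1_K\,dy$. Both sides are nonnegative measures, so the formula for nonnegative measurable $g$ follows by monotone convergence.

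Taking $g\equiv1$ gives $\int_{\mathbb R^n}e^{-\varphi}\,dx=\vol(\intr K)=\vol(K)$, and in particular this integral is finite. There is no real obstacle. The only points needing care are justifying the positivity of the Jacobian (immediate from the equation) and the null boundary of $K$, both standard. The boundedness of $\varphi-h_K$ is not even needed here. It would give finiteness of the integral independently, since $h_K(x)\geq c|x|$ with $c>0$ because $0\in\intr(K)$.
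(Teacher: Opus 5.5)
Your proof is correct and follows the paper's argument. An averaging argument with a supporting functional excludes $0\in\partial K$, and the change of variables $y=\nabla\varphi(x)$ together with $\det(D^2\varphi)=e^{-\varphi}$ gives the pushforward identity; taking $g\equiv1$ gives the volume formula.

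The one case you pass over is $0\notin K$. The paper handles it with the same averaging argument applied to a strictly separating functional; alternatively, separating $0$ from the open convex set $\intr(K)$ treats both cases at once. Conversely, you are slightly more explicit than the paper about using $\vol(\partial K)=0$ to replace $\intr(K)$ by $K$.
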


\begin{proof}
If $0\notin K$, strict separation would give a linear functional $\ell$ with $\ell>0$ on $K$, so the average of $\ell$ over $K$ would be positive, contradicting the barycenter hypothesis; thus $0\in K$. If $0$ were a boundary point of $K$, there would exist a supporting linear functional $\ell$ with $\ell\geq0$ on $K$ and $\ell(0)=0$. Since $K$ is full-dimensional, $\ell$ is strictly positive on a subset of $K$ of positive volume, so the average of $\ell$ over $K$ would be positive, contradicting the barycenter hypothesis. Thus $0\in\intr(K)$ and Theorem~\ref{thm:bb13} applies. For every bounded measurable function $a$ on $K$, the change of variables $y=\nabla\varphi(x)$ gives
\[
\int_{\mathbb R^n}a(\nabla\varphi(x))e^{-\varphi(x)}\,dx=\int_{\mathbb R^n}a(\nabla\varphi(x))\det(D^2\varphi(x))\,dx=\int_{\intr(K)}a(y)\,dy,
\]
using the Monge--Amp\`ere equation $\det(D^2\varphi)=e^{-\varphi}$. This is the pushforward identity, and taking $a\equiv1$ gives the integral formula.
\end{proof}

Fix a BB13 potential $\varphi$ of $K$ and regard it as a torus-invariant weight on $X$. For every positive integer $k$, let
\[
H_k=\Bigl\{f\text{ holomorphic on }X:\int_X|f|^2e^{-k\varphi}\,d\nu<\infty\Bigr\},
\]
a Hilbert space with the indicated weighted $L^2$ inner product, and let
\begin{equation}\label{eq:bergman}
B_k(z)=\sum_a|s_{k,a}(z)|^2,\qquad d\mu_k=\frac{B_ke^{-k\varphi}}{d_k}\,d\nu,\qquad d\mu=\frac{e^{-\varphi}}{\vol(K)}\,d\nu,
\end{equation}
where $(s_{k,a})_a$ is any orthonormal basis of $H_k$; the Bergman function $B_k$ does not depend on the choice of basis. Both $\mu_k$ and $\mu$ are probability measures: $\int_XB_ke^{-k\varphi}\,d\nu=\dim H_k$ by orthonormality, and $\dim H_k=d_k$ by Lemma~\ref{lem:monomial} below, while $\int_Xe^{-\varphi}\,d\nu=\vol(K)$ by Lemma~\ref{lem:potential}. By a \emph{weight} we mean a measurable real-valued function on $X$; the \emph{weighted holomorphic space} of a weight $w$ consists of the holomorphic functions $f$ on $X$ with $\int_X|f|^2e^{-w}\,d\nu<\infty$, a Hilbert space with the corresponding weighted $L^2$ inner product whenever $w$ is locally bounded above, as is every weight used in this paper. We use the following two results of \cite{OAI26} directly.

\begin{lem}[{\cite[Chapter~8, Lemma~2.1]{OAI26}}]\label{lem:monomial}
Let $K$ be a full-dimensional compact convex body with barycenter $0$ and let $\varphi$ be a BB13 potential of $K$. The Laurent monomials $z^m$ with $m\in S_k(K)$ form an orthogonal basis of $H_k$. In particular $H_k$ is finite-dimensional of dimension $d_k=\#S_k(K)$, and $d_k/k^n\to\vol(K)$ as $k\to\infty$. If moreover $\intr(K)\cap\mathbb Z^n=\{0\}$, then $H_1=\mathbb C$. More generally, the weighted holomorphic space defined by any weight $b$ with $b-\varphi$ bounded on $X$ equals $\mathbb C$ whenever $H_1=\mathbb C$; indeed the two weighted norms are equivalent, so the two spaces trivially coincide.
\end{lem}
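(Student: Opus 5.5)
The statement is Lemma~\ref{lem:monomial}, attributed to \cite{OAI26}. I will prove it by Fourier expansion on the compact torus factor. Everything reduces to a one-variable-per-monomial integrability criterion, namely
\[
\int_{\mathbb R^n}e^{\langle m,x\rangle-k\varphi(x)}\,dx<\infty\iff m\in\intr(kK).
\]

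\textbf{Step 1: orthogonal Laurent expansion.} A holomorphic $f$ on $X=(\mathbb C^*)^n$ has a Laurent expansion $f=\sum_m c_mz^m$, converging locally uniformly. For fixed $x$, Parseval in $\theta$ gives
\[
\int|f|^2\,d\theta=\sum_m|c_m|^2e^{\langle m,x\rangle}.
\]
Multiplying by $e^{-k\varphi}$ and integrating in $x$ with Tonelli yields
\[
\|f\|^2=\sum_m|c_m|^2\int_{\mathbb R^n}e^{\langle m,x\rangle-k\varphi}\,dx.
\]
So the monomials are mutually orthogonal, and $f\in H_k$ exactly when every monomial with $c_m\neq0$ has finite norm (and the weighted sum converges). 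Hence $H_k$ is the closed span of the $z^m$ with finite norm, and these form an orthogonal basis.

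\textbf{Step 2: the integrability criterion.} Since $\varphi-h_K$ is bounded (Theorem~\ref{thm:bb13}), the integral converges if and only if
\[
\int e^{\langle m,x\rangle-kh_K(x)}\,dx<\infty.
\]
This is $\int e^{-g}$ with $g(x)=kh_K(x)-\langle m,x\rangle=h_{kK-m}(x)$, which is positively homogeneous of degree one.

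\emph{If $m\in\intr(kK)$:} then $0\in\intr(kK-m)$, so $h_{kK-m}(x)\ge c|x|$ with $c>0$, and the integral converges.

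\emph{If $m\notin\intr(kK)$:} there is a unit vector $u$ with $h_{kK-m}(u)\le0$. By continuity $h_{kK-m}$ is small on a cone around $u$; more precisely, on the cone $\{x=tv : t>0,\ |v-u|<\delta\}$ we have $g(x)\le C\delta t$. Integrating over this cone gives
\[
\int e^{-g}\ \ge\ c\int_0^\infty t^{n-1}e^{-C\delta t}\,dt,
\]
which only shows the integral is at least a positive constant, not that it diverges. To fix this, exploit homogeneity: the cone region where $g<1$ has infinite volume, because $\{g<1\}$ is a convex set containing the ray $\mathbb R_{>0}u$ together with small neighbourhoods of $tu$. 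Indeed, $g(tu+w)\le t\,g(u)+h_{kK-m}(w)\le |w|R$, where $R$ bounds $kK-m$. So $\{g<1\}$ contains the infinite cylinder $\{tu+w : t>0,\ |w|<1/R\}$, whose volume is infinite, and the integral diverges.

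This step is the heart of the lemma. I expect the main care to lie in the boundary case $h_{kK-m}(u)=0$, and the cylinder argument handles it.

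\textbf{Step 3: consequences.} By Steps 1 and 2, $\dim H_k=\#(\mathbb Z^n\cap\intr(kK))=d_k$. The asymptotic $d_k/k^n\to\vol(K)$ is the standard Riemann-sum or Jordan-measurability fact for convex bodies: the lattice points of $\intr(kK)$ count $\mathbb Z^n\cap k\,\intr K$, and rescaling by $1/k$ gives a Riemann sum for $\mathbf 1_{\intr K}$, whose boundary has measure zero. If $\intr(K)\cap\mathbb Z^n=\{0\}$, then $H_1$ is spanned by $z^0=1$, so $H_1=\mathbb C$.

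For the final claim, suppose $|b-\varphi|\le C$. Then
\[
e^{-C}e^{-\varphi}\le e^{-b}\le e^{C}e^{-\varphi},
\]
so the two weighted norms are equivalent and the weighted spaces coincide as sets. In particular the space for $b$ equals $H_1=\mathbb C$.
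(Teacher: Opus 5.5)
Your proof is correct and follows essentially the same route as the paper, which cites \cite{OAI26} for this lemma but proves the more general Lemma~\ref{lem:basis} by the same steps: a Laurent--Parseval--Tonelli reduction to the criterion $\int e^{\langle m,x\rangle-h_P(x)}\,dx<\infty\iff m\in\intr(P)$, followed by the norm-equivalence remark for bounded changes of weight. The only difference is the divergence step. Your half-cylinder argument, based on the sublinearity bound $h_{kK-m}(tu+w)\leq t\,h_{kK-m}(u)+R|w|$, treats $m\notin kK$ and $m\in\partial(kK)$ at once, whereas the paper handles them separately (exponential growth on a cone, and a spherical-cap estimate in polar coordinates), so your version is slightly cleaner.
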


\begin{lem}[{\cite[Chapter~8, Lemma~2.2]{OAI26}}]\label{lem:bergmanconv}
In the situation of Lemma~\ref{lem:monomial},
\[
\frac1k\log B_k\longrightarrow\varphi\quad\text{locally uniformly on }X,\qquad\|\mu_k-\mu\|_{\mathrm{TV}}\longrightarrow0,
\]
as $k\to\infty$, where $\|\cdot\|_{\mathrm{TV}}$ denotes the total-variation distance.
\end{lem}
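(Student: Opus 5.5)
The statement has two parts, and I would prove both by working directly with the explicit orthogonal basis of Lemma~\ref{lem:monomial}. Write $c_m=\int_X|z^m|^2e^{-k\varphi}\,d\nu=\int_{\mathbb R^n}e^{\langle m,x\rangle-k\varphi(x)}\,dx$ for $m\in S_k(K)$. Since the angular measure has mass one, torus-invariance gives
\[
B_k(x)=\sum_{m\in S_k(K)}\frac{e^{\langle m,x\rangle}}{c_m}.
\]
So everything reduces to estimating the Laplace-type integrals $c_m$. Let $\varphi^*$ be the Legendre transform of $\varphi$; it is finite and smooth on $\intr(K)$ because $\nabla\varphi$ is a diffeomorphism onto $\intr(K)$. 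The exponent $\langle m,x\rangle-k\varphi(x)$ has its maximum $k\varphi^*(m/k)$ at the unique point $x_m=(\nabla\varphi)^{-1}(m/k)$.

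\textbf{Step 1 (logarithmic asymptotics).} I would show $\frac1k\log B_k\to\varphi$ locally uniformly.

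For the upper bound, I would use that $\varphi-h_K$ is bounded and $m/k\in\intr(K)$. Then $\langle m,x\rangle-k\varphi(x)\leq -k\,\mathrm{dist}$-type decay away from a compact set, so $c_m$ is at most $e^{k\varphi^*(m/k)}$ times a factor $e^{O(1)\cdot k}$ that is subexponential after normalization. In fact a sharper route is available: by the mean-value inequality for the convex function $\varphi$ on a ball of radius $r$ around $x_m$, one gets $c_m\geq c\,r^n e^{k\varphi^*(m/k)-kC r}$. Hence
\[
\frac{e^{\langle m,x\rangle}}{c_m}\leq r^{-n}e^{kCr}e^{\langle m,x\rangle-k\varphi^*(m/k)}\leq r^{-n}e^{kCr}e^{k\varphi(x)},
\]
using Fenchel's inequality. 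Summing over at most $d_k=O(k^n)$ terms and then letting $r\to0$ slowly gives $\limsup\frac1k\log B_k-\varphi\leq0$, uniformly on compacts. The constant $C$ is a local Lipschitz bound for $\nabla\varphi$, which must be uniform over the relevant compact set.

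For the lower bound, fix $x$ in a compact set $Q$. Its gradient image $\nabla\varphi(Q)$ is a compact subset of $\intr(K)$, so I can choose $m\in S_k(K)$ with $|m/k-\nabla\varphi(x)|\leq\sqrt n/k$. The upper estimate $c_m\leq e^{k\varphi^*(m/k)}\cdot e^{o(k)}$ follows from $\varphi\geq h_K-C$ together with $m/k$ staying in a compact subset of $\intr(K)$: the integrand is then dominated by an exponentially decaying function outside a ball. Together these give $\liminf\frac1k\log B_k-\varphi\geq0$.

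\textbf{Step 2 (total variation).} The density of $\mu_k$ with respect to $d\nu$ is $\rho_k=B_ke^{-k\varphi}/d_k$, and the density of $\mu$ is $\rho=e^{-\varphi}/\vol(K)$. Both measures are probability measures, so by Scheff\'e's lemma it suffices to prove $\rho_k\to\rho$ pointwise (even almost everywhere).

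At a fixed $x$, I would run a second-order Laplace expansion:
\[
c_m=e^{k\varphi^*(m/k)}(2\pi/k)^{n/2}\det(D^2\varphi(x_m))^{-1/2}(1+o(1)),
\]
uniformly for $m/k$ in a compact subset of $\intr(K)$. Taylor-expanding $\varphi^*$ around $\nabla\varphi(x)$ then turns
\[
B_k(x)e^{-k\varphi(x)}=\sum_m e^{\langle m,x\rangle-k\varphi(x)-k\varphi^*(m/k)}/c_m
\]
into a Riemann sum, over the lattice $\frac1k\mathbb Z^n$, of a Gaussian centred at $\nabla\varphi(x)$ with covariance $D^2\varphi(x)/k$. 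The Gaussian mass is $(2\pi/k)^{n/2}\det(D^2\varphi(x))^{1/2}$, and the lattice spacing contributes $k^n$. The result is
\[
B_k(x)e^{-k\varphi(x)}=k^n\det(D^2\varphi(x))(1+o(1))=k^ne^{-\varphi(x)}(1+o(1)),
\]
using the Monge--Amp\`ere equation $\det(D^2\varphi)=e^{-\varphi}$. Dividing by $d_k\sim k^n\vol(K)$ from Lemma~\ref{lem:monomial} gives $\rho_k(x)\to\rho(x)$.

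\textbf{Main obstacle.} The hard part is the tail control in Step 2. The terms with $m/k$ far from $\nabla\varphi(x)$, including those near $\partial K$ where $\varphi^*$ may blow up in derivative and the Laplace expansion is not uniform, must contribute $o(k^n)$ after the factor $e^{-k\varphi(x)}$.

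I would handle this without any uniform expansion, using only the crude Step 1 bound:
\[
\frac{e^{\langle m,x\rangle-k\varphi(x)}}{c_m}\lesssim r^{-n}e^{kCr}e^{-k(\varphi(x)+\varphi^*(m/k)-\langle m/k,x\rangle)}.
\]
The Bregman gap $\varphi(x)+\varphi^*(y)-\langle y,x\rangle$ is convex in $y$, vanishes only at $y=\nabla\varphi(x)$, and grows at least linearly away from it. Hence the terms with $|m/k-\nabla\varphi(x)|\geq\delta$ are exponentially small in total, and only a fixed compact neighbourhood of $\nabla\varphi(x)$ remains. There the uniform Laplace expansion is legitimate.
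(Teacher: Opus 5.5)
Your proposal is correct. There is no proof in the paper to compare it against: the paper imports Lemma~\ref{lem:bergmanconv} verbatim from \cite[Chapter~8, Lemma~2.2]{OAI26}. It then proves the per-monomial Laplace asymptotic (Proposition~\ref{prop:laplace}) separately, because the cited lemma does not record it.

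Your route derives the lemma inside the paper's own framework. From the monomial basis of Lemma~\ref{lem:monomial} you write $B_k=\sum_{m\in S_k}e^{\langle m,x\rangle}/I_k(m/k)$. The $\frac1k\log$ convergence then follows from a crude two-sided comparison of $I_k(m/k)$ with $e^{k\varphi^*(m/k)}$, together with Fenchel's inequality. Total-variation convergence follows from Scheff\'e's lemma once you have the pointwise asymptotic $B_ke^{-k\varphi}=k^n\det(D^2\varphi)(1+o(1))=k^ne^{-\varphi}(1+o(1))$. That asymptotic comes from three inputs:
\begin{itemize}
\item the uniform Laplace expansion, which is exactly Proposition~\ref{prop:laplace}; its proof does not use Lemma~\ref{lem:bergmanconv}, so there is no circularity;
\item a lattice Riemann sum of a Gaussian of width $k^{-1/2}$ over a lattice of spacing $k^{-1}$;
\item the Monge--Amp\`ere equation.
\end{itemize}
What this buys is a self-contained proof that needs only Theorem~\ref{thm:bb13} and the point count $d_k/k^n\to\vol(K)$. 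It also gives a stronger conclusion than the lemma: a leading-order asymptotic for the Bergman function itself, not just for its logarithm. Because Scheff\'e only needs pointwise convergence, you never need uniformity in $x$ near infinity.

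One point needs correcting. In the lower bound $I_k(m/k)\geq c\,r^ne^{k\varphi^*(m/k)-kCr}$, you describe $C$ as a local Lipschitz bound for $\nabla\varphi$ on some compact set. Your tail estimate applies this bound to every $m\in S_k$, including those with $m/k$ near $\partial K$, where the maximiser $x_m=(\nabla\varphi)^{-1}(m/k)$ escapes to infinity. So no such compact set exists. The correct constant is global. For $|v-x_m|\leq r$,
\[
\varphi(v)-\varphi(x_m)-\langle m/k,v-x_m\rangle=\int_0^1\langle\nabla\varphi(x_m+s(v-x_m))-m/k,\,v-x_m\rangle\,ds\leq\operatorname{diam}(K)\,r ,
\]
since $\nabla\varphi$ takes values in $K$ and $m/k\in K$. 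So $C=\operatorname{diam}(K)$ works for every $m$ at once. With this constant, the Step~1 upper bound holds uniformly on all of $X$. The tail estimate also goes through as you wrote it: choose $r$ with $Cr<\eta/2$, where $\eta>0$ is the minimum of the Bregman gap on the sphere of radius $\delta$ about $\nabla\varphi(x)$.
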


We also need the following general form of the monomial-basis statement, in which the body need not have barycenter $0$ and the weight is only tied to the support function; it is used in Section~\ref{sec:rays}.

\begin{lem}[Support-growth basis lemma]\label{lem:basis}
Let $P\subset\mathbb R^n$ be a full-dimensional compact convex body and let $w$ be a torus-invariant weight on $X$ with $|w-h_P|$ bounded. Then the Laurent monomials $z^m$ with $m\in\mathbb Z^n\cap\intr(P)$ form an orthogonal basis of the weighted holomorphic space
\[
H(w)=\Bigl\{f\text{ holomorphic on }X:\int_X|f|^2e^{-w}\,d\nu<\infty\Bigr\},
\]
and $\int_Xe^{-w}\,d\nu$ is finite and positive whenever $0\in\intr(P)$. In particular, if $\intr(P)\cap\mathbb Z^n=\{0\}$, then $H(w)=\mathbb C$. Replacing $w$ by any weight $w'$ with $|w'-w|$ bounded changes neither $H(w)$ nor the finiteness of the integral.
\end{lem}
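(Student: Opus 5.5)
The plan is to reduce everything to a computation in logarithmic coordinates. We use Fubini over the angular variable and the elementary fact that an integrable holomorphic function on a Reinhardt domain expands in a Laurent series.

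\emph{Step 1: orthogonality and Laurent expansion.} Let $f\in H(w)$. Since $f$ is holomorphic on $X=(\mathbb C^*)^n$, it has a Laurent expansion $f=\sum_{m\in\mathbb Z^n}c_mz^m$ converging locally uniformly. The weight $w$ is torus-invariant, so for fixed $x$ we may integrate over $\theta$ and apply Parseval on the torus $(\mathbb R/2\pi\mathbb Z)^n$:
\[
\int_X|f|^2e^{-w}\,d\nu=\sum_{m}|c_m|^2\int_{\mathbb R^n}e^{\langle m,x\rangle-w(x)}\,dx .
\]
The interchange is justified by monotone convergence, since every term is nonnegative. The same computation shows that distinct monomials are orthogonal. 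Hence $f\in H(w)$ if and only if $c_m=0$ whenever $I_m:=\int e^{\langle m,x\rangle-w}\,dx=\infty$. The monomials $z^m$ with $I_m<\infty$ then form an orthogonal basis: their span is dense because the partial sums of the Laurent series converge to $f$ in norm, again by the displayed identity.

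\emph{Step 2: finiteness criterion.} Because $|w-h_P|\le C$, the integral $I_m$ is finite exactly when $J_m=\int e^{\langle m,x\rangle-h_P(x)}\,dx$ is finite. We show that $J_m<\infty$ if and only if $m\in\intr(P)$.

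Suppose first that $m\in\intr(P)$. Then $P\supset m+B_r$ for some $r>0$, so $h_P(x)\ge\langle m,x\rangle+r|x|$. It follows that $J_m\le\int e^{-r|x|}\,dx<\infty$.

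Suppose instead that $m\notin\intr(P)$. Then there is a unit vector $u$ with $\langle m,u\rangle\ge h_P(u)$, by separation or support at a boundary point. On the cone $\{x=tu+y:\ t>0,\ |y|\le 1\}$, the function $h_P$ is Lipschitz with constant $R=\max_P|\cdot|$. Therefore $\langle m,x\rangle-h_P(x)\ge t(\langle m,u\rangle-h_P(u))-(|m|+R)\ge -(|m|+R)$. This region has infinite volume, so $J_m=\infty$.

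This proves the basis statement. The case $m=0$ gives the finiteness and positivity of $\int e^{-w}\,d\nu$ when $0\in\intr(P)$; positivity is trivial. If $\intr(P)\cap\mathbb Z^n=\{0\}$, the only basis element is $z^0=1$, so $H(w)=\mathbb C$.

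\emph{Step 3: stability.} Suppose $|w'-w|\le C'$. Then $e^{-w'}$ and $e^{-w}$ are comparable up to the factor $e^{C'}$. The two weighted norms are therefore equivalent, the spaces coincide, and the finiteness of the integral is unchanged.

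The only mildly delicate point is the use of Fubini and Parseval for a possibly non-integrable $f$ in Step 1. This is handled by nonnegativity of the integrand, together with locally uniform convergence of the Laurent series on each torus $\{x\}\times(\mathbb R/2\pi\mathbb Z)^n$.
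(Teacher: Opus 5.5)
Your proof is correct. Steps 1 and 3 (Laurent expansion, angular Parseval with Tonelli, and equivalence of norms under bounded changes of weight) match the paper's argument. The divergence criterion for $m\notin\intr(P)$ in Step 2 is where you take a genuinely different and simpler route.

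The paper splits this into two cases.
\begin{itemize}
\item For $m\notin P$, it uses strict separation and exponential growth of the integrand on an open cone.
\item For $m\in\partial P$, it uses a polar-coordinates estimate: the surface measure of the shrinking spherical caps $\{|u-v|<1/\rho\}$ is balanced against the radial Jacobian, using Lipschitz continuity of $q(u)=h_P(u)-\langle m,u\rangle$ on the sphere.
\end{itemize}
You handle both cases at once. You take any unit $u$ with $\langle m,u\rangle\geq h_P(u)$, coming either from separation or from a supporting normal. Positive homogeneity and the subadditive bound $h_P(tu+y)\leq t\,h_P(u)+R|y|$ then bound the exponent below by $-(|m|+R)$ on the half-cylinder $\mathbb R_{>0}u+\bar B_1$, which has infinite volume. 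This avoids surface-measure estimates entirely. The paper's exterior case gives exponential growth, which is more than the statement needs.

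Two cosmetic points:
\begin{itemize}
\item The region you integrate over is a half-cylinder, not a cone.
\item In Step 1, ``$f\in H(w)$ if and only if $c_m=0$ whenever $I_m=\infty$'' in general also requires $\sum_m|c_m|^2I_m<\infty$. This is harmless here: Step 2 shows that $\{m:I_m<\infty\}=\mathbb Z^n\cap\intr(P)$ is finite, since $P$ is compact. So $H(w)$ is finite-dimensional, and density and completeness are automatic.
\end{itemize}
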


\begin{proof}
Since $|w-h_P|$ is bounded, the squared norm of $z^m$ is finite exactly when $I(m)=\int_{\mathbb R^n}e^{\langle m,x\rangle-h_P(x)}\,dx$ is finite. If $m\in\intr(P)$, choose $\delta>0$ with $m+\delta B\subseteq P$, where $B$ is the Euclidean unit ball; then $h_P(x)\geq\langle m,x\rangle+\delta|x|$, so $I(m)<\infty$. If $m\notin P$, separation gives a unit vector $v$ with $\langle m,v\rangle>h_P(v)$; the strict inequality persists on a spherical neighborhood of $v$, and polar coordinates show that the integrand grows exponentially on the corresponding cone, so $I(m)=\infty$. If $m\in\partial P$, choose a unit vector $v$ with $\langle m,v\rangle=h_P(v)$ and put $q(u)=h_P(u)-\langle m,u\rangle$ on the unit sphere, so that $q\geq0$, $q(v)=0$, and $q$ is Lipschitz, say $q(u)\leq A|u-v|$. In polar coordinates, on the spherical cap $\{|u-v|<1/\rho\}$ one has $\rho q(u)\leq A$, and the cap has surface measure at least a positive constant times $\rho^{-(n-1)}$ for large $\rho$; multiplying by the radial Jacobian $\rho^{n-1}$ shows that each radial slab of unit length contributes at least a fixed positive amount, so $I(m)=\infty$ (in dimension one the supporting ray gives the divergence directly). Now let $f$ be holomorphic on $X$ with Laurent expansion $f=\sum_mc_mz^m$. For fixed $x$, the angular Parseval identity and Tonelli's theorem give
\[
\int_X|f|^2e^{-w}\,d\nu=\sum_m|c_m|^2\int_{\mathbb R^n}e^{\langle m,x\rangle-w(x)}\,dx,
\]
with every term nonnegative; the integrability criterion forces $c_m=0$ unless $m\in\intr(P)$, and conversely every square-summable combination of the normalized monomials with $m\in\mathbb Z^n\cap\intr(P)$ lies in $H(w)$. This proves completeness, and angular integration proves orthogonality. If $0\in\intr(P)$, then $h_P(x)\geq\delta_0|x|$ for some $\delta_0>0$, so $\int e^{-w}\,d\nu<\infty$, and positivity is clear. The final assertion holds because a bounded change of weight produces equivalent norms.
\end{proof}

For $m\in S_k$ we write
\begin{equation}\label{eq:Ik}
\bigl\|z^m\bigr\|_k^2=I_k(m/k),\qquad I_k(u)=\int_{\mathbb R^n}e^{k(\langle u,x\rangle-\varphi(x))}\,dx,
\end{equation}
which follows from the angular Parseval identity; here $\|\cdot\|_k$ denotes the norm of $H_k$. We write $s_{k,m}=z^m/\sqrt{I_k(m/k)}$ for the normalized monomials. We will also need the precise Laplace asymptotic of the norms \eqref{eq:Ik}; since the statement of Lemma~\ref{lem:bergmanconv} does not record it, we prove it in Section~\ref{sec:gateway} (Proposition~\ref{prop:laplace}).

\subsection{Elementary convexity and measure lemmas}\label{subsec:elementary}

We collect five elementary lemmas. Throughout this subsection, $\alpha$ denotes a nonnegative real number; in the applications, $\alpha$ is instantiated as the dimension $n$.

\begin{lem}\label{lem:pinch}
Let $L\colon[0,\infty)\to\mathbb R$ be convex with $L(0)=0$. Assume that $L(t)\leq\alpha t$ for every $t\geq0$, that the right derivative $L'_+(0)$ exists, and that $L'_+(0)\geq\alpha$. Then $L(t)=\alpha t$ for every $t\geq0$ and $L'_+(0)=\alpha$. Moreover, if $(a_k)_k$ is a real sequence with $\liminf_ka_k\geq\alpha$ and $\limsup_ka_k\leq L'_+(0)$, then $a_k\to\alpha$.
\end{lem}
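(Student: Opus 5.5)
The plan is to compare $L$ with the line $\alpha t$ using only convexity. First I show $L'_+(0)\le\alpha$. Since $L(0)=0$ and $L(t)\le\alpha t$ for $t>0$, each difference quotient satisfies
\[
\frac{L(t)-L(0)}{t}\le\alpha .
\]
Letting $t\downarrow0$, and using that $L'_+(0)$ exists by assumption, gives $L'_+(0)\le\alpha$. Together with the hypothesis $L'_+(0)\ge\alpha$, this yields $L'_+(0)=\alpha$.

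Next I obtain the reverse inequality $L(t)\ge\alpha t$. For a convex function on $[0,\infty)$, the difference quotient $t\mapsto (L(t)-L(0))/t$ is nondecreasing on $(0,\infty)$ (the three-slope inequality). Its infimum as $t\downarrow0$ is therefore $L'_+(0)=\alpha$, so $L(t)/t\ge\alpha$ for every $t>0$. Combined with $L(t)\le\alpha t$, this gives $L(t)=\alpha t$ for all $t>0$. The case $t=0$ holds trivially.

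For the sequence statement, I use $L'_+(0)=\alpha$. The hypotheses then read $\liminf_k a_k\ge\alpha\ge\limsup_k a_k$. Since $\liminf\le\limsup$ always holds, both equal $\alpha$, so $a_k\to\alpha$. This is a finite real limit, because $\alpha$ is finite.

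I expect no real obstacle here. The only point needing care is the monotonicity of difference quotients, which is the standard consequence of convexity. It requires only $L(0)$ to be finite, and that holds since $L(0)=0$. Note that nonnegativity of $\alpha$ is never used.
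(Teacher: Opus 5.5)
Your proposal is correct and follows the paper's proof: monotonicity of secant slopes from the origin gives $L(t)/t\geq L'_+(0)\geq\alpha$, the upper bound $L(t)\leq\alpha t$ then forces equality, and the sequence claim follows from $\liminf\leq\limsup$. Your preliminary step proving $L'_+(0)\leq\alpha$ is harmless but unnecessary, since the paper applies the secant bound directly with the hypothesis $L'_+(0)\geq\alpha$ and reads off $L'_+(0)=\alpha$ from the linearity of $L$.
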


\begin{proof}
For a convex function on $[0,\infty)$ with $L(0)=0$, every secant slope from the origin is at least the right derivative at the origin: $L(t)/t\geq L'_+(0)\geq\alpha$ for $t>0$. The assumed upper bound $L(t)\leq\alpha t$ gives $L(t)=\alpha t$ for every $t\geq0$, and hence $L'_+(0)=\alpha$. If $\liminf a_k\geq\alpha$ and $\limsup a_k\leq L'_+(0)=\alpha$, then $a_k\to\alpha$.
\end{proof}

\begin{lem}\label{lem:secant}
For every positive integer $k$ let $u_k\colon[0,\infty)\to\mathbb R$ be convex with right derivative $g_k=u'_{k,+}(0)$, and suppose that $u_k(0)$ converges to a real number $\varphi_0$. Let $\psi\colon[0,\infty)\to\mathbb R$ satisfy $\psi(0)=\varphi_0$ and $\limsup_ku_k(t)\leq\psi(t)$ for every $t>0$, and suppose that $g=\lim_{t\downarrow0}(\psi(t)-\varphi_0)/t$ exists. Then $\limsup_kg_k\leq g$.
\end{lem}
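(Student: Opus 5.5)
The proof is a direct comparison of secant slopes, using only the convexity of each $u_k$. For a convex function on $[0,\infty)$ the difference quotient $(u_k(t)-u_k(0))/t$ is nondecreasing in $t>0$, and its limit as $t\downarrow0$ is the right derivative $g_k$. Hence for every fixed $t>0$ and every $k$,
\[
g_k\leq\frac{u_k(t)-u_k(0)}{t}.
\]
This is the only property of the $u_k$ we use. (If $u_k$ is convex only on $[0,\infty)$, the right derivative at the endpoint could be $-\infty$; that case only helps the inequality.)

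Fix $t>0$ and take $\limsup_k$ on both sides. Since $u_k(0)\to\varphi_0$ is a genuine limit, it can be separated from the $\limsup$. Together with the hypothesis $\limsup_ku_k(t)\leq\psi(t)$ this gives
\[
\limsup_kg_k\leq\frac{\limsup_ku_k(t)-\varphi_0}{t}\leq\frac{\psi(t)-\varphi_0}{t}.
\]
To be careful, if $\limsup_ku_k(t)=-\infty$ the bound is $-\infty$ and the conclusion is trivial. It cannot be $+\infty$, because $\psi(t)$ is real.

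The left-hand side does not depend on $t$. Letting $t\downarrow0$ and using $\psi(0)=\varphi_0$ together with the assumed existence of $g=\lim_{t\downarrow0}(\psi(t)-\varphi_0)/t$, we obtain $\limsup_kg_k\leq g$.

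There is no real obstacle here. The only points needing care are the extended-real bookkeeping (possible values $\pm\infty$) and the observation that the bound is uniform in $t$ before passing to the limit. Note that neither convexity of $\psi$ nor any uniformity in $k$ is required.
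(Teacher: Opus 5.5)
Your proposal is correct and follows the paper's proof exactly: bound $g_k$ by the secant slope $(u_k(t)-u_k(0))/t$ using convexity, take $\limsup_k$ with $u_k(0)\to\varphi_0$ and $\limsup_k u_k(t)\leq\psi(t)$, then let $t\downarrow0$. Your remarks on the extended-real cases ($g_k=-\infty$, $\limsup_k u_k(t)=-\infty$) are harmless extra care that the paper leaves implicit.
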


\begin{proof}
Fix $t>0$. Convexity gives $g_k\leq(u_k(t)-u_k(0))/t$. Taking upper limits in $k$ and using $u_k(0)\to\varphi_0$ and $\limsup_ku_k(t)\leq\psi(t)$ yields $\limsup_kg_k\leq(\psi(t)-\varphi_0)/t$. Letting $t\downarrow0$ proves the lemma.
\end{proof}

\begin{lem}\label{lem:derivative}
Let $\mu$ be a probability measure on a measurable space $Y$, let $c\geq0$ be finite, and for $t\geq0$ let $\psi_t$ and $\varphi_0$ be measurable real functions on $Y$ with $\psi_0=\varphi_0$ and $0\leq(\psi_t-\varphi_0)/t\leq c$ for every $t>0$. Suppose that $g(y)=\lim_{t\downarrow0}(\psi_t(y)-\varphi_0(y))/t$ exists for $\mu$-almost every $y$. Define
\[
Z(t)=\int_Ye^{-(\psi_t-\varphi_0)}\,d\mu,\qquad L(t)=-\log Z(t).
\]
Then $L'_+(0)$ exists and equals $\int_Yg\,d\mu$.
\end{lem}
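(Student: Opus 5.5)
The plan is to differentiate $Z$ at $0$ under the integral sign via dominated convergence, and then pass the derivative through the logarithm. Put $h_t=(\psi_t-\varphi_0)/t$ for $t>0$, so that $0\le h_t\le c$ and $h_t\to g$ $\mu$-almost everywhere as $t\downarrow0$; in particular $0\le g\le c$ almost everywhere, so $g$ is integrable. Since $\psi_0=\varphi_0$ we have $Z(0)=1$, and $0\le\psi_t-\varphi_0\le ct$ gives $e^{-ct}\le Z(t)\le1$ for all $t\ge0$. Hence $Z$ is finite and positive and $L$ is well defined and finite.

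The key computation is
\[
\frac{1-Z(t)}{t}=\int_Y\frac{1-e^{-t h_t}}{t}\,d\mu .
\]
For the integrand, the elementary inequality $0\le 1-e^{-s}\le s$ for $s\ge0$ gives the bound $0\le (1-e^{-th_t})/t\le h_t\le c$. This supplies a constant dominating function, which is integrable because $\mu$ is a probability measure.

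For the pointwise limit, write $(1-e^{-th_t})/t=h_t\cdot\frac{1-e^{-th_t}}{th_t}$, with the convention that the second factor is $1$ when $h_t=0$. The second factor tends to $1$ uniformly, because $th_t\in[0,ct]\to0$ and $(1-e^{-s})/s\to1$ as $s\downarrow0$. Hence the integrand converges to $g$ almost everywhere. Dominated convergence along an arbitrary sequence $t_j\downarrow0$ then gives
\[
\frac{Z(t)-1}{t}\longrightarrow-\int_Yg\,d\mu,
\]
so $Z'_+(0)=-\int g\,d\mu$.

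Finally, since $Z(t)\to1$ (indeed $|Z(t)-1|\le ct$), the chain rule for one-sided derivatives applies through $\log$ at $1$:
\[
\frac{L(t)}{t}=-\frac{\log Z(t)}{Z(t)-1}\cdot\frac{Z(t)-1}{t}.
\]
The first factor tends to $-1$ as $Z(t)\to1$; when $Z(t)=1$ the quotient $L(t)/t$ is simply $0$, which is consistent with the limit. Therefore $L'_+(0)=\int_Yg\,d\mu$.

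I expect no real obstacle here. The only point needing care is the measurability and pointwise behaviour of $g$, and this is handled by the almost-everywhere hypothesis together with the uniform bound $c$, which makes dominated convergence apply directly. The assumption that the limit defining $g$ exists only almost everywhere is harmless, since the dominated convergence argument only needs convergence $\mu$-almost everywhere.
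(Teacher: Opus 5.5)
Your proposal is correct and follows the paper's proof essentially line by line: the same bound $0\leq(1-e^{-th_t})/t\leq h_t\leq c$, then dominated convergence to get $(1-Z(t))/t\to\int_Yg\,d\mu$, then the derivative of $-\log$ at $1$. Your extra details (the factorization giving the pointwise limit, the passage through sequences $t_j\downarrow0$, and the case $Z(t)=1$) only make explicit what the paper leaves implicit.
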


\begin{proof}
For $t>0$ put $h_t=(\psi_t-\varphi_0)/t$, so that $0\leq h_t\leq c$ and $h_t\to g$ pointwise $\mu$-almost everywhere. Since $0\leq1-e^{-y}\leq y$ for $y\geq0$, we have
\[
0\leq\frac{1-e^{-th_t}}{t}\leq h_t\leq c,\qquad\frac{1-e^{-th_t(y)}}{t}\longrightarrow g(y)\quad\text{for $\mu$-a.e.\ }y.
\]
Dominated convergence gives $(1-Z(t))/t\to\int_Yg\,d\mu$ as $t\downarrow0$. Since $Z(t)\to Z(0)=1$ and $-\log$ has derivative $-1$ at $1$, we conclude $L(t)/t\to\int_Yg\,d\mu$, which is the claim.
\end{proof}

\begin{lem}\label{lem:fatou}
Let $\mu_k$ and $\mu$ be probability measures on the same measurable space with $\|\mu_k-\mu\|_{\mathrm{TV}}\to0$, let $c$ be finite, and let $g_k,g$ be measurable functions with $0\leq g_k\leq c$, $0\leq g\leq c$, and $\limsup_kg_k(y)\leq g(y)$ for $\mu$-almost every $y$. Then
\[
\limsup_{k\to\infty}\int g_k\,d\mu_k\leq\int g\,d\mu.
\]
\end{lem}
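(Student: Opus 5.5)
The statement is a reverse Fatou lemma for varying measures, and I will reduce it to the ordinary reverse Fatou lemma for the fixed measure $\mu$ by splitting
\[
\int g_k\,d\mu_k=\int g_k\,d\mu+\int g_k\,d(\mu_k-\mu).
\]
\textbf{Step 1: the error term.} Since $0\leq g_k\leq c$, the second term satisfies
\[
\Bigl|\int g_k\,d(\mu_k-\mu)\Bigr|\leq c\,|\mu_k-\mu|(Y)\leq 2c\,\|\mu_k-\mu\|_{\mathrm{TV}}.
\]
The precise constant depends on the normalization of the total-variation norm, but in every convention it is a fixed multiple of $\|\mu_k-\mu\|_{\mathrm{TV}}$. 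It therefore tends to $0$ by hypothesis.

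\textbf{Step 2: the main term.} The functions $c-g_k$ are nonnegative and measurable. Since $\mu$ is a probability measure, Fatou's lemma applied to them gives
\[
\int \liminf_k (c-g_k)\,d\mu\leq\liminf_k\int(c-g_k)\,d\mu .
\]
Subtracting both sides from $c$ yields
\[
\limsup_k\int g_k\,d\mu\leq\int\limsup_k g_k\,d\mu .
\]
Measurability of $\limsup_k g_k$ is standard. The almost-everywhere bound $\limsup_k g_k\leq g$ then gives
\[
\limsup_k\int g_k\,d\mu\leq\int g\,d\mu .
\]

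\textbf{Conclusion.} Taking $\limsup$ in the splitting, and using subadditivity of $\limsup$ together with the vanishing of the error term, gives the claim.

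There is no real obstacle here. The only point requiring care is that the uniform bound $c$ is what lets total-variation convergence control the change of measure. Without it, mass escaping to where $g_k$ is large could break the inequality.
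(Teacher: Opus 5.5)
Your proof is correct and follows the paper's argument: you bound the change of measure by $c$ times the total-variation distance, then apply Fatou's lemma to $c-g_k$ under the fixed measure $\mu$ and use the almost-everywhere bound $\limsup_k g_k\leq g$. Your aside on the total-variation normalization is harmless, since any fixed multiple of $\|\mu_k-\mu\|_{\mathrm{TV}}$ still tends to zero.
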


\begin{proof}
Since $0\leq g_k\leq c$, we have $|\int g_k\,d\mu_k-\int g_k\,d\mu|\leq c\|\mu_k-\mu\|_{\mathrm{TV}}\to0$, so it suffices to bound $\limsup_k\int g_k\,d\mu$. Applying Fatou's lemma to the nonnegative functions $c-g_k$ gives
\[
\liminf_k\int(c-g_k)\,d\mu\geq\int\liminf_k(c-g_k)\,d\mu=\int\Bigl(c-\limsup_kg_k\Bigr)d\mu\geq\int(c-g)\,d\mu,
\]
and subtracting from $c$ proves the claim.
\end{proof}

\begin{prop}\label{prop:bridge}
Let $\mu_k,\mu,c,g_k,g$ be as in Lemma~\ref{lem:fatou} and put $A_k=\int g_k\,d\mu_k$. Let $L\colon[0,\infty)\to\mathbb R$ be a finite convex function with $L(0)=0$, $L'_+(0)=\int g\,d\mu$, and $L(t)\leq\alpha t$ for every $t\geq0$. Then $\limsup_kA_k\leq\alpha$. If in addition $\liminf_kA_k\geq\alpha$, then $A_k\to\alpha$, $L'_+(0)=\alpha$, and $L(t)=\alpha t$ for every $t\geq0$.
\end{prop}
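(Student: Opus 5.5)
The statement is a direct combination of Lemma~\ref{lem:fatou} and Lemma~\ref{lem:pinch}; the plan is to chain them, with no new estimate needed.

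\textbf{Step 1 (upper bound on $L'_+(0)$).} Since $L$ is finite and convex on $[0,\infty)$ with $L(0)=0$, every secant slope from the origin dominates the right derivative. Thus, for each $t>0$,
\[
L'_+(0)\leq\frac{L(t)}{t}\leq\alpha.
\]
The right derivative exists by hypothesis, being given as $\int g\,d\mu$. It is a finite number in $[0,c]$, since $0\leq g\leq c$ and $\mu$ is a probability measure.

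\textbf{Step 2 (bound on $\limsup A_k$).} Lemma~\ref{lem:fatou} applies verbatim to $\mu_k,\mu,g_k,g$. It gives
\[
\limsup_k A_k=\limsup_k\int g_k\,d\mu_k\leq\int g\,d\mu=L'_+(0)\leq\alpha.
\]
This is the first assertion.

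\textbf{Step 3 (the equality case).} Assume in addition that $\liminf_kA_k\geq\alpha$. Combining with Step 2,
\[
\alpha\leq\liminf_kA_k\leq\limsup_kA_k\leq L'_+(0)\leq\alpha,
\]
so in particular $L'_+(0)\geq\alpha$. The hypotheses of Lemma~\ref{lem:pinch} now hold: $L$ is convex, $L(0)=0$, $L(t)\leq\alpha t$, the right derivative exists, and $L'_+(0)\geq\alpha$. That lemma yields $L(t)=\alpha t$ for all $t\geq0$ and $L'_+(0)=\alpha$. The second part of Lemma~\ref{lem:pinch}, applied with $a_k=A_k$, then gives $A_k\to\alpha$; this also follows directly from the squeeze above.

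There is no real obstacle; the only point needing care is bookkeeping. Lemma~\ref{lem:fatou} needs the pointwise inequality $\limsup_k g_k\leq g$ only $\mu$-almost everywhere, together with the uniform bound $c$, and both are part of the hypotheses inherited from Lemma~\ref{lem:fatou}.
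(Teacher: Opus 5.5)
Your proposal is correct and follows the paper's proof. Lemma~\ref{lem:fatou} bounds $\limsup_kA_k$ by $\int g\,d\mu=L'_+(0)$, convexity gives $L'_+(0)\leq L(t)/t\leq\alpha$, and Lemma~\ref{lem:pinch} settles the equality case, where you usefully make explicit the step the paper leaves implicit: $\liminf_kA_k\geq\alpha$ combined with $\limsup_kA_k\leq L'_+(0)$ supplies the hypothesis $L'_+(0)\geq\alpha$ of Lemma~\ref{lem:pinch}.
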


\begin{proof}
Lemma~\ref{lem:fatou} gives $\limsup_kA_k\leq\int g\,d\mu=L'_+(0)$. For every $t>0$, convexity gives $L'_+(0)\leq L(t)/t\leq\alpha$, proving $\limsup_kA_k\leq\alpha$. The remaining assertions follow from Lemma~\ref{lem:pinch} applied to $L$ and $(A_k)_k$.
\end{proof}

\section{The sharp jet-sum theorem}\label{sec:gateway}

In this section, we prove the first main step of the analytic half: for a body satisfying the hypotheses of Theorem~\ref{thm:main}, the truncated vanishing-order filtrations of the spaces $H_k$ have sharp normalized dimension sums at every point of the torus, and the associated envelope partition functions are exactly linear (Theorem~\ref{thm:gateway}). On the Fano side, the counterpart of this section is the single-point jet count of \cite[Propositions~3.1 and~3.2]{Zha25}, which descends from \cite[Theorem~2.3]{Fuj18}; see Remark~\ref{rem:dictionary}.

Throughout this section, $K\subset\mathbb R^n$ is a full-dimensional compact convex body with barycenter $0$ and $\intr(K)\cap\mathbb Z^n=\{0\}$, the function $\varphi$ is a BB13 potential of $K$ (Lemma~\ref{lem:potential}), and $H_k$, $B_k$, $\mu_k$, $\mu$, $S_k$, $d_k$ are as in Section~\ref{subsec:transport}. For $p\in X$ and a holomorphic function $s$ on $X$, we write $\ord_p(s)$ for the vanishing order of $s$ at $p$, with the convention $\ord_p(0)=+\infty$, and we set
\[
F_{k,p}^j=\{s\in H_k:\ord_p(s)\geq j\},\qquad j\geq1,
\]
a decreasing filtration of $H_k$ by linear subspaces. We put
\[
N_k=(n+1)k,\qquad A_{k,p}=\frac{1}{kd_k}\sum_{j=1}^{N_k}\dim F_{k,p}^j.
\]

\subsection{The jet--interpolation identity}\label{subsec:jet}

The following jet--interpolation identity is the Laurent-monomial form of \cite[Proposition~1.1]{Per00}; see also \cite[Remark~2.3]{Per00} for its projective toric formulation. Our filtration index $j$ corresponds to jets through order $j-1$.

\begin{lem}\label{lem:jet}
Let $S\subset\mathbb Z^n$ be a finite set, let $H_S$ be the complex vector space spanned by the Laurent monomials $z^m$ with $m\in S$, and let $p\in X$ be any point. For a positive integer $j$, let $F_p^j\subset H_S$ be the subspace of functions vanishing to order at least $j$ at $p$, let $R_{<j}(S)$ be the space of restrictions to $S$ of complex polynomials in $n$ variables of total degree less than $j$, and let $I_{<j}(S)$ be the space of such polynomials vanishing at every point of $S$. Then
\[
\operatorname{codim}_{H_S}F_p^j=\dim R_{<j}(S)=\binom{n+j-1}{n}-\dim I_{<j}(S).
\]
In particular, the codimension does not depend on $p$, and
\[
\dim F_p^j\geq\#S-\binom{n+j-1}{n},
\]
with equality if and only if no nonzero polynomial of total degree less than $j$ vanishes on $S$.
\end{lem}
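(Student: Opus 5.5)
The plan is to identify the space of $(j-1)$-jets at $p$ of functions in $H_S$ with $R_{<j}(S)$ through an explicit linear change of basis. Write $p=(p_1,\dots,p_n)\in X$ with all $p_i\neq0$, and use local coordinates $w_i=\log(z_i/p_i)$ near $p$. These are biholomorphic coordinates on a neighborhood of $p$ and send $p$ to $0$, so vanishing order at $p$ equals vanishing order at $w=0$ in these coordinates. In them, $z^m=p^m e^{\langle m,w\rangle}$, and the Taylor expansion gives
\[
z^m=p^m\sum_{\beta}\frac{m^\beta}{\beta!}w^\beta .
\]
Hence $s=\sum_{m\in S}c_mz^m$ has $\ord_p(s)\geq j$ if and only if $\sum_m c_mp^m m^\beta=0$ for every multi-index $\beta$ with $|\beta|<j$.

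Next I consider the linear map $T\colon H_S\to\mathbb C^{\{\beta:|\beta|<j\}}$, $T(s)_\beta=\sum_m c_mp^m m^\beta$. Then $F_p^j=\ker T$, so $\operatorname{codim}F_p^j=\operatorname{rank}T$. The coefficients $p^m$ are nonzero, so rescaling each basis vector does not change the rank. Therefore $\operatorname{rank}T$ equals the rank of the matrix $M=(m^\beta)_{m\in S,\,|\beta|<j}$, which no longer depends on $p$.

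The rank of $M$ is also the rank of the evaluation map $\mathrm{ev}\colon\mathbb C[x]_{<j}\to\mathbb C^S$, $P\mapsto(P(m))_{m\in S}$, written in the monomial basis $x^\beta$; this uses only that row rank equals column rank. The image of $\mathrm{ev}$ is $R_{<j}(S)$ and its kernel is $I_{<j}(S)$. Rank--nullity, with $\dim\mathbb C[x]_{<j}=\binom{n+j-1}{n}$, then gives both equalities in the displayed formula.

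The remaining assertions follow directly. Independence of $p$ is already established. Since $\dim F_p^j=\#S-\operatorname{codim}F_p^j=\#S-\binom{n+j-1}{n}+\dim I_{<j}(S)$, the inequality holds, with equality exactly when $I_{<j}(S)=0$.

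No step is a genuine obstacle. The only points needing care are that vanishing order is invariant under the coordinate change $w=\log(z/p)$, which holds because it is a local biholomorphism, and the transpose identification between jet conditions and polynomial evaluation.
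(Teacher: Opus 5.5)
Your proposal is correct and follows essentially the same route as the paper. Both use the coordinates $z_i=p_ie^{w_i}$ to write the jet map as $(m^\beta)$ times the invertible diagonal matrix $\operatorname{diag}(p^m)$, identify its rank with that of the polynomial evaluation map on $S$ by transposition, and finish with rank--nullity.
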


\begin{proof}
Choose local holomorphic coordinates $w$ near $p$ by $z_i=p_ie^{w_i}$, so that $w=0$ at $p$. For $m\in S$,
\[
z^m=p^me^{\langle m,w\rangle},\qquad\text{hence}\qquad\partial_w^\alpha\bigl(z^m\bigr)\Big|_{w=0}=p^mm^\alpha
\]
for every multi-index $\alpha$, where $p^m=\prod_ip_i^{m_i}\neq0$ and $m^\alpha=\prod_im_i^{\alpha_i}$. The order-less-than-$j$ jet of a holomorphic function at $p$ is represented by the derivatives $\partial_w^\alpha$ with $|\alpha|<j$, in any local holomorphic coordinate system. Thus, in the monomial basis of $H_S$, the jet map $H_S\to\mathbb C^{\{|\alpha|<j\}}$ has matrix $(p^mm^\alpha)_{\alpha,m}$, which is the matrix $(m^\alpha)_{\alpha,m}$ multiplied on the right by the invertible diagonal matrix $\operatorname{diag}(p^m)$. Its kernel is $F_p^j$, so
\[
\operatorname{codim}_{H_S}F_p^j=\operatorname{rank}(m^\alpha)_{|\alpha|<j,\,m\in S}.
\]
The row span of $(m^\alpha)_{|\alpha|<j}$, viewed inside the space of functions on $S$, is exactly the space of restrictions to $S$ of polynomials of total degree less than $j$, because the monomials $x^\alpha$ with $|\alpha|<j$ form a basis of that polynomial space. Hence the rank equals $\dim R_{<j}(S)$. Rank--nullity for the evaluation map from the $\binom{n+j-1}{n}$-dimensional space of polynomials of degree less than $j$ to functions on $S$ gives $\dim R_{<j}(S)=\binom{n+j-1}{n}-\dim I_{<j}(S)$, and the final assertions follow.
\end{proof}

Applying Lemma~\ref{lem:jet} to $S=S_k$ and $H_S=H_k$ (Lemma~\ref{lem:monomial}), we obtain, for every $p\in X$ and $j\geq1$,
\begin{equation}\label{eq:jetcount}
\dim F_{k,p}^j=d_k-r_{k,j},\qquad r_{k,j}:=\dim R_{<j}(S_k),
\end{equation}
so that $A_{k,p}$ is independent of $p$; we henceforth write $A_k$ for it.

\subsection{The Laplace asymptotic}\label{subsec:laplace}

The following per-monomial asymptotic refines Lemma~\ref{lem:bergmanconv}; it is used in Section~\ref{sec:rays}. Recall the integrals $I_k(u)$ from \eqref{eq:Ik}, and let $\varphi^*(u)=\sup_x(\langle u,x\rangle-\varphi(x))$ denote the Legendre transform of $\varphi$.

\begin{prop}\label{prop:laplace}
For every compact set $U\subset\intr(K)$, uniformly for $u\in U$,
\[
I_k(u)=e^{k\varphi^*(u)}\Bigl(\frac{2\pi}{k}\Bigr)^{n/2}\bigl(\det D^2\varphi(x_u)\bigr)^{-1/2}(1+o(1)),\qquad k\to\infty,
\]
where $x_u=(\nabla\varphi)^{-1}(u)$. Consequently, if $m_k\in S_k$ and $m_k/k\to y\in\intr(K)$, then
\[
\frac1k\log\bigl|s_{k,m_k}(x)\bigr|^2\longrightarrow\langle y,x\rangle-\varphi^*(y)
\]
locally uniformly in $x\in\mathbb R^n$.
\end{prop}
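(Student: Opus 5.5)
The plan is to run a standard Laplace-method argument, using the properties of the BB13 potential from Theorem~\ref{thm:bb13}. Fix $u\in U$ and set $x_u=(\nabla\varphi)^{-1}(u)$. This is well defined because $\nabla\varphi$ is a diffeomorphism onto $\intr(K)$. The phase $\Phi_u(x)=\langle u,x\rangle-\varphi(x)$ is smooth and strictly concave, and its unique maximum is at $x_u$, with value $\varphi^*(u)$ and Hessian $-D^2\varphi(x_u)$, which is negative definite. Since $U$ is compact and $(\nabla\varphi)^{-1}$ is continuous, the set $C=\{x_u:u\in U\}$ is compact. Hence $D^2\varphi$ is uniformly positive definite and uniformly continuous on a fixed neighbourhood $C+B(0,1)$.

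The integral splits into a local piece and a tail. On the ball $|x-x_u|<\delta$, expand by Taylor:
\[
\Phi_u(x)=\varphi^*(u)-\tfrac12\langle D^2\varphi(x_u)(x-x_u),x-x_u\rangle+O(\omega(\delta)|x-x_u|^2),
\]
where $\omega$ is a modulus of continuity for $D^2\varphi$ on the neighbourhood and is uniform in $u$. The standard Gaussian computation then gives the main term $e^{k\varphi^*(u)}(2\pi/k)^{n/2}(\det D^2\varphi(x_u))^{-1/2}$, with a multiplicative error $1+O(\omega(\delta))+o(1)$. Letting $\delta\to0$ slowly gives the uniform $1+o(1)$.

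The tail is the main obstacle, since it needs uniform control at infinity. Strict concavity gives a uniform drop on the sphere: $\Phi_u\le\varphi^*(u)-\eta$ on $|x-x_u|=\delta$, with $\eta>0$ uniform in $u\in U$ by compactness. Concavity along rays from $x_u$ then forces
\[
\Phi_u(x)\le\varphi^*(u)-\eta|x-x_u|/\delta
\]
for $|x-x_u|\ge\delta$. So the tail is at most $e^{k\varphi^*(u)}\int_{|y|\ge\delta}e^{-k\eta|y|/\delta}\,dy=e^{k\varphi^*(u)}O(e^{-ck})$, which is negligible compared with $k^{-n/2}$.

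Finally, for $m_k\in S_k$ with $m_k/k\to y\in\intr(K)$, the points $m_k/k$ lie in a compact subset of $\intr(K)$ for large $k$. Using $|z^{m_k}|^2=e^{\langle m_k,x\rangle}$, we get
\[
\frac1k\log|s_{k,m_k}|^2=\langle m_k/k,x\rangle-\varphi^*(m_k/k)+O\Bigl(\frac{\log k}{k}\Bigr).
\]
The $O(\log k/k)$ bound comes from the uniform asymptotic, since the determinant factor is bounded above and below on the compact set. The conclusion then follows from continuity of $\varphi^*$ on $\intr(K)$ and local uniformity in $x$ of the linear term.
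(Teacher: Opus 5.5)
Your proof is correct. It follows the same Laplace-method skeleton as the paper: the maximizers $x_u$ range over a compact set, the Gaussian core gives the main term, and everything else is exponentially small uniformly in $u$. The tail, however, is controlled by a different mechanism.

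The paper splits the complement of a neighbourhood of $x_u$ into two parts. On a bounded annulus, compactness and strict convexity give $D_u\geq\eta>0$. On the far region it uses the global asymptotics of the potential: $\varphi-h_K$ is bounded by Theorem~\ref{thm:bb13}, and $h_K(v)-\langle u,v\rangle\geq\delta|v|$ because $U$ has positive distance from $\partial K$.

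Your ray-concavity estimate $\Phi_u(x)\leq\varphi^*(u)-\eta|x-x_u|/\delta$ covers both regions at once. It needs only convexity of $\varphi$ plus a uniform drop on a single small sphere, for instance $\eta=\lambda\delta^2/2$ with $\lambda>0$ a lower bound for $D^2\varphi$ near the compact set of the $x_u$. (Positive definiteness comes from convexity together with $\det D^2\varphi=e^{-\varphi}>0$, which is worth stating.) Since it never uses the support-function comparison, it applies to any smooth convex potential with nondegenerate Hessian.

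Likewise, you sandwich the phase between the quadratic forms of $D^2\varphi(x_u)\pm\omega(\delta)I$, which makes the uniformity in $u$ of the Gaussian core explicit. The paper instead rescales $v=x_u+r/\sqrt k$ and appeals to dominated convergence uniformly in $u$.

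Your route is therefore a bit more self-contained. The paper's route ties the decay rate directly to the geometry of $K$, in the spirit of the support-function comparisons it uses elsewhere (Lemma~\ref{lem:basis}).
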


\begin{proof}
Write $H_u=D^2\varphi(x_u)$ and define the Bregman divergence
\[
D_u(v)=\varphi(v)-\varphi(x_u)-\langle u,v-x_u\rangle\geq0,
\]
which vanishes only at $v=x_u$, so that $I_k(u)=e^{k\varphi^*(u)}\int_{\mathbb R^n}e^{-kD_u(v)}\,dv$. The points $x_u$, $u\in U$, lie in a compact set $\mathcal C$ because $(\nabla\varphi)^{-1}$ is continuous. Fix a compact neighborhood $\mathcal C_1$ of $\mathcal C$. By positivity and continuity of $D^2\varphi$ there are $\lambda,L>0$ with
\[
\frac{\lambda}{2}|v-x_u|^2\leq D_u(v)\leq\frac{L}{2}|v-x_u|^2
\]
for all $u\in U$ and all $v$ in a fixed neighborhood of $x_u$ contained in $\mathcal C_1$, and Taylor's theorem gives
\[
kD_u\Bigl(x_u+\frac{r}{\sqrt k}\Bigr)\longrightarrow\frac12\langle H_ur,r\rangle
\]
uniformly for $u\in U$ and $r$ in every fixed bounded set, with the quadratic lower bound supplying an integrable Gaussian majorant there. It remains to bound the complementary region uniformly. Since $U$ has positive distance $\delta$ from $\partial K$, we have $h_K(v)-\langle u,v\rangle\geq\delta|v|$ for $u\in U$, and since $\varphi-h_K$ is bounded and $\varphi^*$ is bounded on $U$, there is $C_0$ with
\[
\langle u,v\rangle-\varphi(v)-\varphi^*(u)\leq-\delta|v|+C_0.
\]
On the compact region outside a fixed small neighborhood of the points $x_u$ and inside a large ball, strict positivity and compactness give $D_u\geq\eta>0$ uniformly in $u$, contributing $O(e^{-\eta k})$; outside the large ball, whose radius $R$ we choose with $\delta R\geq C_0+1$, the displayed bound makes the integrand at most $e^{-k(\delta|v|-C_0)}$, whose integral over $\{|v|>R\}$ is at most a constant multiple of $e^{-k}$ for all large $k$. After the change of variables $v=x_u+r/\sqrt k$, dominated convergence therefore applies uniformly in $u\in U$ and gives
\[
k^{n/2}e^{-k\varphi^*(u)}I_k(u)\longrightarrow\int_{\mathbb R^n}e^{-\langle H_ur,r\rangle/2}\,dr=(2\pi)^{n/2}(\det H_u)^{-1/2},
\]
which is the first assertion. For the second, $|s_{k,m_k}(x)|^2=e^{\langle m_k,x\rangle}/I_k(m_k/k)$ by \eqref{eq:Ik}, so
\[
\frac1k\log|s_{k,m_k}(x)|^2=\Bigl\langle\frac{m_k}{k},x\Bigr\rangle-\frac1k\log I_k(m_k/k)\longrightarrow\langle y,x\rangle-\varphi^*(y),
\]
locally uniformly in $x$, using the first assertion on a compact neighborhood $U$ of $y$ in $\intr(K)$ and the continuity of $\varphi^*$ on $U$.
\end{proof}

\subsection{Envelope rays and their convexity}\label{subsec:envelope}

Fix $p\in X$. For each $k$, choose an orthonormal basis $(s_{k,a})_a$ of $H_k$ adapted to the decreasing filtration $(F_{k,p}^j)_j$, meaning that every $F_{k,p}^j$ is the span of a subset of the basis; such bases exist: setting $F_{k,p}^0=H_k$ and noting that $F_{k,p}^j=0$ for all large $j$, one concatenates orthonormal bases of the successive orthogonal complements of $F_{k,p}^{j+1}$ in $F_{k,p}^j$ over $j\geq0$. Write $j_{k,a}=\ord_p(s_{k,a})$, truncate the orders at $N_k$ by setting $q_{k,a}=\min(j_{k,a},N_k)$, and define
\begin{equation}\label{eq:finiteray}
u_{k,p,t}(z)=\frac1k\log\sum_a|s_{k,a}(z)|^2e^{tq_{k,a}},\qquad t\geq0,
\end{equation}
so that $u_{k,p,0}=\frac1k\log B_k$. For an adapted basis, $\#\{a:j_{k,a}\geq j\}=\dim F_{k,p}^j$, and the telescoping identity $e^{tq_{k,a}}=1+\sum_{j=1}^{N_k}(e^{tj}-e^{t(j-1)})\mathbf 1_{j_{k,a}\geq j}$ writes $\sum_a|s_{k,a}(z)|^2e^{tq_{k,a}}=B_k(z)+\sum_{j=1}^{N_k}(e^{tj}-e^{t(j-1)})K_{k,p}^j(z)$, where $K_{k,p}^j$ is the diagonal reproducing kernel of the subspace $F_{k,p}^j$; hence \eqref{eq:finiteray} does not depend on the choice of adapted basis. The initial slope
\begin{equation}\label{eq:slope}
g_{k,p}(z)=\partial_tu_{k,p,t}(z)\Big|_{t=0^+}=\frac{\sum_a(q_{k,a}/k)|s_{k,a}(z)|^2}{B_k(z)}
\end{equation}
takes values in $[0,n+1]$.

\begin{lem}\label{lem:envelope}
For each $M\geq1$, let $\Psi_M$ be the upper-semicontinuous regularization of $\sup_{k\geq M}U_k$ on $X\times\mathbb C^*$, where $U_k(z,\tau)=\frac1k\log\sum_a|s_{k,a}(z)\tau^{q_{k,a}}|^2$, and let $\Psi=\lim_{M\to\infty}\Psi_M$. Then $\Psi$ is plurisubharmonic on $X\times\mathbb C^*$ and invariant under rotations of $\tau$, and its radial slices $\psi_{p,t}(z)=\Psi(z,e^{t/2})$ satisfy
\[
\psi_{p,0}=\varphi,\qquad0\leq\psi_{p,t}-\varphi\leq(n+1)t\quad(t\geq0).
\]
For each $z$, the function $t\mapsto\psi_{p,t}(z)$ is convex, so the initial velocity
\[
g_p(z)=\lim_{t\downarrow0}\frac{\psi_{p,t}(z)-\varphi(z)}{t}
\]
exists and lies in $[0,n+1]$, and $\limsup_kg_{k,p}(z)\leq g_p(z)$ for every $z$.
\end{lem}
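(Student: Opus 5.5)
The plan is to establish three things about the functions $U_k$ in turn: that they are plurisubharmonic, that they satisfy uniform bounds, and that they behave well under taking limits. Each $U_k(z,\tau)=\frac1k\log\sum_a|s_{k,a}(z)\tau^{q_{k,a}}|^2$ is the logarithm of a sum of squared moduli of holomorphic functions on $X\times\mathbb C^*$, so it is plurisubharmonic. It depends on $\tau$ only through $|\tau|$, hence it is invariant under rotations of $\tau$. At $|\tau|=e^{t/2}$ it equals $u_{k,p,t}(z)$ from \eqref{eq:finiteray}.

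Since $0\le q_{k,a}\le N_k=(n+1)k$, for $|\tau|\ge1$ we get the sandwich
\[
\tfrac1k\log B_k\le U_k(z,\tau)\le\tfrac1k\log B_k+(n+1)\log|\tau|^2,
\]
and for $|\tau|\le1$ the analogous sandwich holds with the inequalities reversed. By Lemma~\ref{lem:bergmanconv}, $\frac1k\log B_k\to\varphi$ locally uniformly. Hence the family $\{U_k\}_{k\ge M}$ is locally uniformly bounded above. Therefore the regularized supremum $\Psi_M$ is plurisubharmonic, and the limit of the decreasing sequence $\Psi_M$ is plurisubharmonic as well. It is not identically $-\infty$, thanks to the lower bound. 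Rotation invariance passes to the supremum, the regularization, and the limit.

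Next we pass the sandwich to the limit. Local uniform convergence of $\frac1k\log B_k$ together with continuity of $\varphi$ gives $\varphi\le\Psi\le\varphi+(n+1)\log|\tau|^2$ for $|\tau|\ge1$. The upper bound survives usc regularization because its right-hand side is continuous. The lower bound is only needed pointwise at the level of the $\limsup$, and it holds since $\Psi\ge\limsup_kU_k$. Setting $|\tau|=e^{t/2}$ gives $\log|\tau|^2=t$, so $0\le\psi_{p,t}-\varphi\le(n+1)t$, and at $t=0$ both bounds give $\psi_{p,0}=\varphi$.

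For convexity in $t$, fix $z$. The function $\tau\mapsto\Psi(z,\tau)$ is subharmonic on $\mathbb C^*$ and radial. A radial subharmonic function is a convex function of $\log|\tau|$, hence convex in $t$ on all of $\mathbb R$. This convexity, together with $\psi_{p,0}=\varphi$ and the bound $0\le(\psi_{p,t}-\varphi)/t\le n+1$, makes the difference quotients monotone and bounded. So the limit $g_p(z)$ exists and lies in $[0,n+1]$.

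The last step is $\limsup_kg_{k,p}(z)\le g_p(z)$. I would apply Lemma~\ref{lem:secant} with $u_k(t)=u_{k,p,t}(z)$, $\psi(t)=\psi_{p,t}(z)$, and $\varphi_0=\varphi(z)$. Each $u_k$ is convex in $t$ as a log-sum-exp of affine functions, and its right derivative is $g_{k,p}(z)$ by \eqref{eq:slope}. We have $u_k(0)\to\varphi(z)$, and $\limsup_ku_k(t)\le\Psi_M(z,e^{t/2})$ for every $M$, hence $\limsup_ku_k(t)\le\psi_{p,t}(z)$.

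The main subtlety, which I expect to be the only real obstacle, is the earlier step: justifying that the regularized limit inherits the upper bound and $\psi_{p,0}=\varphi$ exactly, since usc regularization could in principle raise values at $t=0$. The continuity of the two-sided envelope $\varphi$, $\varphi+(n+1)t$ settles this, because both bounds are continuous and already pinch $\Psi$ at $|\tau|=1$.
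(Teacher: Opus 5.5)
Your proposal is correct and follows essentially the same route as the paper. The ingredients match: plurisubharmonicity of the $U_k$; the two-sided bounds coming from $0\le q_{k,a}\le(n+1)k$, which the paper packages as $|U_k(z,\tau)-U_k(z,1)|\le(n+1)\bigl|\log|\tau|^2\bigr|$; local uniform convergence of $\frac1k\log B_k$ to pin $\Psi(z,1)=\varphi(z)$ through the regularization; convexity of radial subharmonic functions in $\log|\tau|^2$; and Lemma~\ref{lem:secant} for the slope comparison. One point should be stated precisely: the regularization at $|\tau|=1$ also sees points with $|\tau|<1$, so the continuous majorant that survives it is $\varphi+(n+1)\max(0,\log|\tau|^2)$ plus a vanishing error, not $\varphi+(n+1)t$ alone. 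Your $|\tau|\le1$ sandwich already supplies this.
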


\begin{proof}
Each $U_k$ is $\frac1k$ times the logarithm of a finite sum of squared absolute values of the holomorphic functions $(z,\tau)\mapsto s_{k,a}(z)\tau^{q_{k,a}}$, hence plurisubharmonic. Since $0\leq q_{k,a}\leq N_k=(n+1)k$, we have
\begin{equation}\label{eq:lipschitz}
u_{k,p,0}\leq u_{k,p,t}\leq u_{k,p,0}+(n+1)t\qquad(t\geq0),
\end{equation}
and, together with the local uniform convergence $u_{k,p,0}\to\varphi$ (Lemma~\ref{lem:bergmanconv}), this makes the family $\{U_k:k\geq M\}$ locally uniformly bounded above on $X\times\mathbb C^*$; the two-sided bound $|U_k(z,\tau)-U_k(z,1)|\leq(n+1)\bigl|\log|\tau|^2\bigr|$ covers parameter annuli. The upper-semicontinuous regularization of a locally upper-bounded supremum of plurisubharmonic functions is plurisubharmonic, so each $\Psi_M$ is plurisubharmonic; the sequence $\Psi_M$ decreases in $M$, and its limit $\Psi$ is plurisubharmonic and finite by the same local bounds. Every $U_k$ depends on $\tau$ only through $|\tau|$, hence so does $\Psi$. At $\tau=1$, local uniform convergence of $u_{k,p,0}$ to the continuous function $\varphi$ gives $\Psi(z,1)=\varphi(z)$: the annular Lipschitz bound $|U_k(z,\tau)-U_k(z,1)|\leq(n+1)\bigl|\log|\tau|^2\bigr|$ controls the tail suprema near $\tau=1$ by their values at $\tau=1$, so the upper-semicontinuous regularization adds nothing on that slice. That is, $\psi_{p,0}=\varphi$; passing to regularized tail suprema in \eqref{eq:lipschitz} gives the two-sided slice bounds. The restriction of the plurisubharmonic, rotation-invariant $\Psi$ to a fixed $z$ is a radial subharmonic function of $\tau$, hence convex in $t=\log|\tau|^2$; the slice bounds place its initial velocity $g_p(z)$ in $[0,n+1]$. Finally, for fixed $z$ the functions $t\mapsto u_{k,p,t}(z)$ are convex with $u_{k,p,0}(z)\to\varphi(z)$ and $\limsup_ku_{k,p,t}(z)\leq\psi_{p,t}(z)$ by the envelope construction, so Lemma~\ref{lem:secant} gives $\limsup_kg_{k,p}(z)\leq g_p(z)$.
\end{proof}

\begin{lem}\label{lem:berndtsson}
Let $w_0$ be a torus-invariant weight on $X$ whose weighted holomorphic space consists precisely of the constant functions and with $V_0=\int_Xe^{-w_0}\,d\nu\in(0,\infty)$. Let $\Psi$ be a plurisubharmonic function on $X\times\{\tau\in\mathbb C:|\tau|>1\}$, invariant under rotations of $\tau$, whose slices $\psi_t(z)=\Psi(z,e^{t/2})$ satisfy $w_0\leq\psi_t\leq w_0+ct$ for a constant $c$ and all $t>0$, and suppose $\psi_t\to\psi_0=w_0$ pointwise as $t\downarrow0$. Define
\[
Z(t)=\frac{1}{V_0}\int_Xe^{-\psi_t}\,d\nu,\qquad L(t)=-\log Z(t)\qquad(t\geq0).
\]
Then $L$ is finite and convex on $[0,\infty)$ with $L(0)=0$ and $0\leq L(t)\leq ct$. If moreover the pointwise initial velocity $g(z)=\lim_{t\downarrow0}(\psi_t(z)-w_0(z))/t$ exists for $\mu_0$-almost every $z$, where $d\mu_0=V_0^{-1}e^{-w_0}\,d\nu$, then $L'_+(0)=\int_Xg\,d\mu_0$.
\end{lem}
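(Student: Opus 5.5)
The plan is to split the statement into three parts: finiteness and the elementary bounds, convexity via Berndtsson's theorem, and the slope formula via Lemma~\ref{lem:derivative}.

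\emph{Finiteness and bounds.} Since $w_0\leq\psi_t\leq w_0+ct$, we get
\[
e^{-ct}e^{-w_0}\leq e^{-\psi_t}\leq e^{-w_0},
\]
so $e^{-ct}\leq Z(t)\leq1$. Hence $L$ is finite with $0\leq L(t)\leq ct$. Also $\psi_0=w_0$ gives $Z(0)=1$ and $L(0)=0$.

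\emph{Convexity on $(0,\infty)$.} Here I would invoke Berndtsson's theorem on the positivity of direct images \cite[Theorem~1.1]{Ber06}, in its form for trivial fibrations over a one-dimensional base with noncompact fibre $X=(\mathbb C^*)^n$. Put $\tau=e^{(t+is)/2}$.
\begin{itemize}
\item For each $t$, the weighted space $\mathcal H_t$ of $\psi_t$ is sandwiched between the spaces for $w_0$ and $w_0+ct$. These have equivalent norms with constant $e^{ct}$, so $\mathcal H_t=\mathbb C$.
\item The holomorphic vector bundle $\tau\mapsto\mathcal H_{\log|\tau|^2}$ is therefore a trivial line bundle. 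Its $L^2$ metric on the constant section $1$ is $\|1\|^2_t=V_0Z(t)$.
\item Berndtsson's theorem makes this line bundle Nakano (here simply) semipositive. That is, $-\log\|1\|^2_t$ is subharmonic in $\tau$.
\item By rotation invariance it depends only on $t$, so it is convex in $t$. Thus $L(t)=-\log Z(t)$ is convex on $(0,\infty)$.
\end{itemize}

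\emph{Justifying Berndtsson's theorem here.} This step is the main obstacle, since $X$ is not compact or pseudoconvex-bounded and $\Psi$ is merely plurisubharmonic rather than smooth. I would first try the following two reductions.
\begin{itemize}
\item Exhaust $X$ by bounded pseudoconvex (Reinhardt) domains $X_R=\{|x|<R\}$, on which Berndtsson's theorem holds for plurisubharmonic weights. The constants stay in the weighted Bergman space there, though the Bergman space is larger.
\item This leaves the issue that on $X_R$ the relevant quantity is the Bergman kernel at a point, not $\|1\|^{-2}$. Instead I would use the dual formulation: $\log$ of the Bergman kernel $B_t(z)$ is plurisubharmonic in $(z,\tau)$ by \cite{Ber06}.
\item On all of $X$ with $\mathcal H_t=\mathbb C$, we have $B_t\equiv 1/(V_0Z(t))$. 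So $\log B_t=L(t)-\log V_0$ is subharmonic in $\tau$, hence convex in $t$.
\item To pass from $X_R$ to $X$, the Bergman kernels of $X_R$ decrease to that of $X$ as $R\to\infty$. A decreasing limit of plurisubharmonic functions is plurisubharmonic, or $\equiv-\infty$, which is excluded by the lower bound.
\item If smoothness is needed, regularize $\Psi$ by convolution in the torus directions and in $t$, which preserves rotation invariance. Monotone convergence then passes convexity to the limit.
\end{itemize}

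\emph{Extension to $t=0$.} The pointwise convergence $\psi_t\to w_0$ gives $Z(t)\to Z(0)=1$ by dominated convergence, with dominating function $e^{-w_0}$. So $L$ is continuous at $0$ and convexity extends to $[0,\infty)$.

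\emph{Slope.} Take $Y=X$ and $\mu=\mu_0$ in Lemma~\ref{lem:derivative}, with $\varphi_0=w_0$. The hypothesis $0\leq(\psi_t-w_0)/t\leq c$ is exactly the slice bound, and the a.e.\ limit $g$ is assumed. That lemma then gives $L'_+(0)=\int_Xg\,d\mu_0$.
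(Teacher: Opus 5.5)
Your proposal is correct and follows the paper's proof: the slice bounds give $e^{-ct}\leq Z\leq1$, every slice space is $\mathbb C$, so the slice Bergman kernel is $1/(V_0Z(t))$, and Berndtsson's log-plurisubharmonicity together with rotation invariance gives convexity in $t$, with Lemma~\ref{lem:derivative} supplying the slope. Your exhaustion and regularization detour is unnecessary, because \cite[Theorem~1.1]{Ber06} applies directly to the unbounded pseudoconvex domain $X\times\{|\tau|>1\}$ with an arbitrary plurisubharmonic weight, the $-\infty$ alternative being excluded by finiteness. The one adjustment the paper makes that you omit is to write $d\nu=\pi^{-n}\prod_i|z_i|^{-2}\,d\lambda$ and absorb the pluriharmonic term $\sum_i\log|z_i|^2$ into the weight, since Berndtsson's theorem is stated for Lebesgue measure.
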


\begin{proof}
The slice bounds give $e^{-ct}V_0\leq\int_Xe^{-\psi_t}\,d\nu\leq V_0$, so $Z(t)\in[e^{-ct},1]$ and $0\leq L(t)\leq ct$; also $Z(0)=1$, so $L(0)=0$. Since $\psi_t-w_0$ is bounded for each fixed $t$, the weighted holomorphic space on the slice with weight $\psi_t$ coincides with that of $w_0$, hence consists precisely of the constants, by the equivalence of the two weighted norms. Write $d\nu=\pi^{-n}\prod_i|z_i|^{-2}\,d\lambda(z)$, where $d\lambda$ is Lebesgue measure on $\mathbb C^n\supset X$. The function $\rho(z)=\sum_i\log|z_i|^2+n\log\pi$ is pluriharmonic on $X$, so $\Phi=\Psi+\rho$ is plurisubharmonic on the pseudoconvex domain $X\times\{|\tau|>1\}$ and $e^{-\Phi}\,d\lambda=e^{-\Psi}\,d\nu$ on the slices. By \cite[Theorem~1.1]{Ber06}, the logarithm of the diagonal weighted Bergman kernel of the slices is plurisubharmonic in all variables or identically $-\infty$. Because each slice space is $\mathbb C$, spanned by the constant $1$ of squared norm $\int_Xe^{-\psi_t}\,d\nu\in(0,\infty)$, the slice Bergman kernel is the reciprocal of that norm; in particular it is finite and positive, the degenerate branch is excluded, and its logarithm equals $L(t)-\log V_0$, a function of $t=\log|\tau|^2$ alone. A finite rotation-invariant subharmonic function of $\tau$ is convex in $\log|\tau|^2$, so $L$ is convex on $(0,\infty)$; the bounds $0\leq L(t)\leq ct$ give continuity at $0$ and extend convexity to $[0,\infty)$. The final assertion is Lemma~\ref{lem:derivative} applied with $Y=X$ and the measure $\mu_0$, using $0\leq(\psi_t-w_0)/t\leq c$ and $Z(t)=\int_Xe^{-(\psi_t-w_0)}\,d\mu_0$.
\end{proof}

\subsection{The Schwarz bound}\label{subsec:schwarz}

\begin{lem}[Ball bound]\label{lem:ballbound}
Let $p\in X$, choose local holomorphic coordinates $\zeta$ near $p$ by $z_i=p_ie^{\zeta_i}$ on the coordinate ball $B_{2r}=\{|\zeta|<2r\}$ for some $r>0$, and put $M_r=\sup_{B_{2r}}\varphi$. In these coordinates, $d\nu=2^n(2\pi)^{-n}\,d\lambda(\zeta)$ is a constant multiple of Lebesgue measure. Moreover, there is a constant $C_r$, independent of $k$, with the following property: for every $k$, every orthonormal basis $(s_{k,a})_a$ of $H_k$ adapted to the vanishing-order filtration at $p$, every family of integers $0\leq q_{k,a}\leq\ord_p(s_{k,a})$, and every $\tau$ with $|\tau|\geq1$,
\[
\frac1k\log\sum_a|s_{k,a}(\zeta)\tau^{q_{k,a}}|^2\leq M_r+\frac1k\log\bigl(C_r^2d_k\bigr)\qquad\text{on the set }\{|\zeta|<r|\tau|^{-1}\}.
\]
In particular, for $\tau=e^{t/2}$ with $t\geq0$, the rays \eqref{eq:finiteray} satisfy $u_{k,p,t}\leq M_r+\frac1k\log(C_r^2d_k)$ on $\{|\zeta|<re^{-t/2}\}$.
\end{lem}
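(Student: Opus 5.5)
First I check the measure identity. With $z_i=p_ie^{\zeta_i}$ and $z_i=\exp(x_i/2+i\theta_i)$, I get $x_i/2+i\theta_i=\log p_i+\zeta_i$. So $x_i=2\operatorname{Re}\zeta_i+\mathrm{const}$ and $\theta_i=\operatorname{Im}\zeta_i+\mathrm{const}$. The angular measure is normalized, so $d\theta=(2\pi)^{-n}\prod_i d\theta_i$ with Lebesgue $d\theta_i$. Hence $d\nu=dx\,d\theta=2^n(2\pi)^{-n}\,d\lambda(\zeta)$. I would fix $r$ small enough that this chart is a biholomorphism onto its image on $B_{2r}$; taking $2r<\pi$ suffices. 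Then $M_r<\infty$ by continuity of $\varphi$ on the compact closure.

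The main estimate is a Schwarz-lemma argument applied to each adapted basis element separately. First I bound each $s=s_{k,a}$ on $B_r$. Since $|s|^2$ is plurisubharmonic, the sub-mean-value property on a ball of radius $r$ about $\zeta\in B_r$ (contained in $B_{2r}$) gives
\[
|s(\zeta)|^2\leq\frac{1}{\lambda(B_r)}\int_{B_{2r}}|s|^2\,d\lambda\leq\frac{(2\pi)^n e^{kM_r}}{2^n\lambda(B_r)}\int_X|s|^2e^{-k\varphi}\,d\nu.
\]
Here I used $e^{-k\varphi}\geq e^{-kM_r}$ on $B_{2r}$. The last integral equals $1$ by orthonormality. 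This yields $\sup_{B_r}|s_{k,a}|\leq C_re^{kM_r/2}$ with $C_r^2=(2\pi)^n/(2^n\lambda(B_r))$, which is independent of $k$ and $a$.

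Next I use the vanishing order. Since $\ord_p(s_{k,a})\geq q_{k,a}$, the one-variable Schwarz lemma, applied to restrictions to complex lines through $0$, gives $|s_{k,a}(\zeta)|\leq C_re^{kM_r/2}(|\zeta|/r)^{q_{k,a}}$ for $|\zeta|<r$. Concretely, for a unit vector $v$ the function $\lambda\mapsto s(\lambda v)/\lambda^{q}$ is holomorphic on $|\lambda|<r$, and the maximum principle bounds it by $\sup_{B_r}|s|/r^{q}$. On the region $|\zeta|<r|\tau|^{-1}$ with $|\tau|\geq1$ we have $|\zeta\tau|/r<1$, so
\[
|s_{k,a}(\zeta)\tau^{q_{k,a}}|^2\leq C_r^2e^{kM_r}.
\]
This region lies inside $B_r$ because $|\tau|\geq1$. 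Summing over the $d_k$ basis elements and applying $\frac1k\log$ gives the claimed inequality; the specialization $\tau=e^{t/2}$ is immediate.

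The argument is routine, and the only real care point is the chart. I must make sure the coordinate ball $B_{2r}$ maps injectively into $X$ so that the local integral is dominated by the global $H_k$-norm; choosing $2r<\pi$ handles this. The adaptedness of the basis is not actually needed beyond supplying the orders $q_{k,a}\leq\ord_p(s_{k,a})$.
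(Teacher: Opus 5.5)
Your proof is correct and follows the same route as the paper: the submean inequality on $B_{2r}$ with $e^{-k\varphi}\geq e^{-kM_r}$ gives the uniform bound $\sup_{B_r}|s_{k,a}|\leq C_re^{kM_r/2}$. The one-variable Schwarz lemma on complex lines through $p$ then improves this by the factor $(|\zeta|/r)^{q_{k,a}}$, and summing the $d_k$ terms gives the estimate. Your explicit constant $C_r^2=(2\pi)^n/(2^n\lambda(B_r))$ and the observation that $2r<\pi$ makes the chart injective (so the local integral is dominated by the global norm) are details the paper leaves implicit.
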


\begin{proof}
The measure identity follows from $x_i=\log|p_i|^2+2\operatorname{Re}\zeta_i$ and $\theta_i=\arg p_i+\operatorname{Im}\zeta_i$. If $s\in H_k$ has norm one, then $\int_{B_{2r}}|s|^2\,d\nu\leq e^{kM_r}$, so the holomorphic submean inequality gives $\sup_{B_r}|s|\leq C_re^{kM_r/2}$ with $C_r$ independent of $k$ and $s$. If $s$ vanishes to order $j$ at $p$, applying the one-variable Schwarz lemma on each complex line through $p$ improves this to
\[
|s(\zeta)|\leq C_re^{kM_r/2}\Bigl(\frac{|\zeta|}{r}\Bigr)^{j}\qquad(|\zeta|<r).
\]
For $|\zeta||\tau|<r$, the bounds $q_{k,a}\leq\ord_p(s_{k,a})$ and $(|\zeta|/r)^2|\tau|^2\leq1$ give $|s_{k,a}(\zeta)\tau^{q_{k,a}}|^2\leq C_r^2e^{kM_r}$; summing over the $d_k$ basis elements and taking $\frac1k\log$ proves the bound.
\end{proof}

\begin{lem}\label{lem:schwarz}
Assume $\intr(K)\cap\mathbb Z^n=\{0\}$, fix $p\in X$, let $\psi_{p,t}$ be the envelope of Lemma~\ref{lem:envelope}, and put
\[
Z_p(t)=\frac{1}{\vol(K)}\int_Xe^{-\psi_{p,t}}\,d\nu,\qquad L_p(t)=-\log Z_p(t);
\]
this is the partition function of Lemma~\ref{lem:berndtsson} applied with $w_0=\varphi$ and $V_0=\vol(K)$, whose hypotheses hold by Lemmas~\ref{lem:envelope}, \ref{lem:monomial}, and~\ref{lem:potential}. Then
\[
L_p(t)\leq nt\qquad\text{for every }t\geq0.
\]
\end{lem}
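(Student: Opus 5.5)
The plan is to combine the convexity of $L_p$ from Lemma~\ref{lem:berndtsson} with a lower bound on $Z_p(t)$ obtained by integrating only over a shrinking coordinate ball around $p$, on which the Ball bound (Lemma~\ref{lem:ballbound}) controls the envelope. Since $L_p$ is convex with $L_p(0)=0$, the secant slope $L_p(t)/t$ is nondecreasing in $t>0$. It therefore suffices to prove the asymptotic estimate
\[
L_p(t)\leq nt+C\qquad(t\geq0)
\]
for some constant $C$ independent of $t$. Indeed, this gives $\lim_{t\to\infty}L_p(t)/t\leq n$, and monotonicity of the secant slope then yields $L_p(t)/t\leq n$ for every $t>0$.

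\textbf{Step 1: transfer the Ball bound to the envelope.} Fix $r>0$ as in Lemma~\ref{lem:ballbound}, and fix $\tau$ with $|\tau|\geq1$. For $k\geq M$ we have $U_k\leq M_r+\varepsilon_M$ on the open set $\{(\zeta,\tau):|\zeta||\tau|<r\}$, where
\[
\varepsilon_M=\sup_{k\geq M}\tfrac1k\log\bigl(C_r^2d_k\bigr).
\]
Because $d_k$ grows polynomially in $k$ (Lemma~\ref{lem:monomial}), $\varepsilon_M\to0$ as $M\to\infty$. The supremum $\sup_{k\geq M}U_k$ is bounded by $M_r+\varepsilon_M$ on this open set, and the upper-semicontinuous regularization of a function bounded by a constant on an open set is bounded by the same constant there. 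Hence $\Psi_M\leq M_r+\varepsilon_M$ on that set. Letting $M\to\infty$ gives
\[
\psi_{p,t}\leq M_r\qquad\text{on }\{|\zeta|<re^{-t/2}\}.
\]
This regularization step is the only delicate point, and it is harmless precisely because the bound is taken on an open set.

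\textbf{Step 2: the volume count.} By the measure identity in Lemma~\ref{lem:ballbound}, $d\nu$ is a constant multiple of Lebesgue measure in the coordinates $\zeta$. The ball $\{|\zeta|<\rho\}\subset\mathbb C^n$ has Lebesgue volume proportional to $\rho^{2n}$. With $\rho=re^{-t/2}\leq r$, the ball lies inside the coordinate chart, and
\[
Z_p(t)\geq\frac{1}{\vol(K)}\int_{\{|\zeta|<re^{-t/2}\}}e^{-M_r}\,d\nu=c_r\,e^{-M_r}\,r^{2n}e^{-nt},
\]
where $c_r>0$ depends only on $n$ and $\vol(K)$. Taking $-\log$ gives the asymptotic estimate $L_p(t)\leq nt+C$ with
\[
C=M_r-\log\bigl(c_rr^{2n}\bigr).
\]

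\textbf{Step 3: conclude by convexity.} The hypotheses of Lemma~\ref{lem:berndtsson} are in place: the slice bounds and $\psi_{p,0}=\varphi$ come from Lemma~\ref{lem:envelope}, and $H_1=\mathbb C$ comes from Lemma~\ref{lem:monomial} because $\intr(K)\cap\mathbb Z^n=\{0\}$. That lemma gives that $L_p$ is finite and convex on $[0,\infty)$ with $L_p(0)=0$. For $0<s<t$, convexity gives
\[
\frac{L_p(s)}{s}\leq\frac{L_p(t)}{t}\leq n+\frac Ct.
\]
Letting $t\to\infty$ yields $L_p(s)\leq ns$ for every $s>0$, which proves the lemma. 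The exponent $n$ arises exactly because the Schwarz-lemma region shrinks like $e^{-t/2}$ in each of $n$ complex dimensions.
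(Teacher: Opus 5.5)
Your proposal is correct and is essentially the paper's proof: pass the Ball bound through the upper-semicontinuous regularization on an open shrinking region, read off $Z_p(t)\geq c\,e^{-M_r}e^{-nt}$ from the $d\nu$-volume of the coordinate ball of radius $re^{-t/2}$, and remove the additive constant using that $L_p(t)/t$ is nondecreasing by convexity. One cosmetic point, which the paper's own phrasing shares: in Step~1 the open set should be $\{|\tau|>1,\ |\zeta||\tau|<r\}$, since that is where Lemma~\ref{lem:ballbound} applies and the chart is defined, and the case $t=0$ follows directly from $\psi_{p,0}=\varphi\leq M_r$.
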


\begin{proof}
Fix $r>0$ and the coordinates of Lemma~\ref{lem:ballbound}. Since $d_k=O(k^n)$ by Lemma~\ref{lem:monomial}, the error term $\frac1k\log(C_r^2d_k)$ tends to zero as $k\to\infty$. The bound of Lemma~\ref{lem:ballbound} holds on the open subset $\{|\zeta|<r|\tau|^{-1}\}$ of $X\times\mathbb C^*$, so it survives the upper-semicontinuous regularization defining the envelope, and we conclude $\psi_{p,t}\leq M_r$ on $\{|\zeta|<re^{-t/2}\}$. The $d\nu$-volume of this coordinate ball is a positive constant times $e^{-nt}$, because its radius in $\mathbb C^n$ is $re^{-t/2}$ and $d\nu$ is a constant multiple of Lebesgue measure there. Hence
\[
Z_p(t)\geq c_re^{-M_r}e^{-nt}\quad\text{for all }t\geq0,\qquad\text{so}\qquad L_p(t)\leq nt+D_r
\]
for a constant $D_r$ independent of $t$. Finally, $L_p$ is convex with $L_p(0)=0$ by Lemma~\ref{lem:berndtsson}, so $t\mapsto L_p(t)/t$ is nondecreasing on $(0,\infty)$; for $0<t<T$ this gives $L_p(t)/t\leq L_p(T)/T\leq n+D_r/T$, and letting $T\to\infty$ removes the constant.
\end{proof}

\subsection{The gateway theorem}\label{subsec:gatewaythm}

\begin{lem}\label{lem:layercake}
For every $k$ and $p$,
\[
\int_Xg_{k,p}\,d\mu_k=\frac{1}{kd_k}\sum_{j=1}^{N_k}\dim F_{k,p}^j=A_k.
\]
\end{lem}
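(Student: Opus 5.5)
The plan is to compute the integral directly from the definitions of $g_{k,p}$ and $\mu_k$, since the Bergman function cancels. By \eqref{eq:slope} and \eqref{eq:bergman}, with an orthonormal basis $(s_{k,a})_a$ of $H_k$ adapted to the filtration at $p$ (the same basis used to define $g_{k,p}$, recalling that $B_k$ is basis-independent), we have
\[
g_{k,p}\,d\mu_k=\frac{\sum_a(q_{k,a}/k)|s_{k,a}|^2}{B_k}\cdot\frac{B_ke^{-k\varphi}}{d_k}\,d\nu=\frac{1}{kd_k}\sum_aq_{k,a}|s_{k,a}|^2e^{-k\varphi}\,d\nu .
\]
The only point needing care is that $B_k>0$ everywhere, so the cancellation is legitimate pointwise. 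This holds because $H_k$ contains the constant $1$ (as $0\in S_k$, since $0\in\intr(K)$ by Lemma~\ref{lem:potential}), and $1$ appears with positive weight in the reproducing kernel. Alternatively, the set $\{B_k=0\}$ is $\mu_k$-null by definition of $\mu_k$, and on it one may use any convention.

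Next we integrate term by term; the sum is finite. Orthonormality gives $\int_X|s_{k,a}|^2e^{-k\varphi}\,d\nu=1$, so
\[
\int_Xg_{k,p}\,d\mu_k=\frac{1}{kd_k}\sum_aq_{k,a}.
\]

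Finally we convert $\sum_aq_{k,a}$ into the filtration dimensions by the layer-cake identity. Since $q_{k,a}=\min(j_{k,a},N_k)$ is a nonnegative integer,
\[
q_{k,a}=\sum_{j=1}^{N_k}\mathbf 1_{j_{k,a}\geq j}.
\]
Summing over $a$ and using that the basis is adapted, so that $\#\{a:j_{k,a}\geq j\}=\dim F^j_{k,p}$, we obtain
\[
\sum_aq_{k,a}=\sum_{j=1}^{N_k}\dim F^j_{k,p}.
\]
Equation~\eqref{eq:jetcount} then identifies the result with $A_k$, independently of $p$.

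The argument is a direct computation. The only step requiring attention is the adaptedness count $\#\{a:j_{k,a}\geq j\}=\dim F^j_{k,p}$, which we justify as follows. Basis vectors taken from the orthogonal complement of $F^{j'+1}$ in $F^{j'}$ have order exactly $j'$, because they lie in $F^{j'}$ and are nonzero elements outside $F^{j'+1}$. Hence the vectors with order $\geq j$ are exactly those drawn from $F^j$, and they span it.
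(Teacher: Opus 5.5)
Your proposal is correct and is essentially the paper's proof: cancel $B_k$ between $g_{k,p}$ and $\mu_k$, use orthonormality to get $\frac{1}{kd_k}\sum_a q_{k,a}$, rewrite $q_{k,a}=\sum_{j=1}^{N_k}\mathbf 1_{j_{k,a}\geq j}$ and use adaptedness to obtain $\sum_{j=1}^{N_k}\dim F^j_{k,p}$, then invoke \eqref{eq:jetcount}. Your added checks, the positivity of $B_k$ and the adaptedness count for the concatenated basis, are sound; the count also holds for any adapted basis, and $g_{k,p}$ does not depend on the choice of adapted basis anyway.
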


\begin{proof}
By \eqref{eq:slope} and the definition \eqref{eq:bergman} of $\mu_k$, the factor $B_k$ cancels and
\[
\int_Xg_{k,p}\,d\mu_k=\frac{1}{kd_k}\sum_aq_{k,a}\int_X|s_{k,a}|^2e^{-k\varphi}\,d\nu=\frac{1}{kd_k}\sum_aq_{k,a},
\]
using orthonormality. For every $a$ we have $q_{k,a}=\min(j_{k,a},N_k)=\sum_{j=1}^{N_k}\mathbf 1_{j_{k,a}\geq j}$, and $F_{k,p}^j$ is spanned by exactly those $s_{k,a}$ with $j_{k,a}\geq j$; summing first over $a$ and then over $j$ gives $\sum_aq_{k,a}=\sum_{j=1}^{N_k}\dim F_{k,p}^j$. The second equality is \eqref{eq:jetcount}.
\end{proof}

\begin{thm}[Sharp jet-sum theorem]\label{thm:gateway}
Let $K\subset\mathbb R^n$ be a full-dimensional compact convex body with barycenter $0$, $\intr(K)\cap\mathbb Z^n=\{0\}$, and $\vol(K)=(n+1)^n/n!$. Then, with the notation above:
\begin{enumerate}
\item $A_k\to n$ as $k\to\infty$; equivalently, $\displaystyle\frac{1}{kd_k}\sum_{j=1}^{(n+1)k}\bigl(d_k-r_{k,j}\bigr)\longrightarrow n$;
\item for every $p\in X$, the envelope partition function satisfies $L_p(t)=nt$ for every $t\geq0$, and $\int_Xg_p\,d\mu=n$.
\end{enumerate}
\end{thm}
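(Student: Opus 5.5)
The plan is to squeeze $A_k$ between an elementary lower bound and the analytic upper bound already assembled, and to conclude with Proposition~\ref{prop:bridge} applied with $\alpha=n$.

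\emph{Upper side.} Fix $p\in X$ and take $\mu_k,\mu$ from \eqref{eq:bergman}. Set $g_k=g_{k,p}$ and $g=g_p$, both with values in $[0,n+1]$ (so $c=n+1$). Lemma~\ref{lem:bergmanconv} gives $\|\mu_k-\mu\|_{\mathrm{TV}}\to0$, and Lemma~\ref{lem:envelope} gives $\limsup_kg_{k,p}\leq g_p$ pointwise. The hypotheses of Lemma~\ref{lem:fatou} therefore hold. Next, apply Lemma~\ref{lem:berndtsson} to the envelope of Lemma~\ref{lem:envelope}, with $w_0=\varphi$ and $V_0=\vol(K)$; its hypotheses hold because $H_1=\mathbb C$ (Lemma~\ref{lem:monomial}), by Lemma~\ref{lem:potential}, and because the slice bounds hold with $c=n+1$. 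This shows that $L_p$ is finite and convex with $L_p(0)=0$ and $L_p'(0^+)=\int g_p\,d\mu_0$, and $\mu_0=\mu$. Lemma~\ref{lem:schwarz} gives $L_p(t)\leq nt$. Since $A_k=\int g_{k,p}\,d\mu_k$ by Lemma~\ref{lem:layercake}, Proposition~\ref{prop:bridge} yields $\limsup_kA_k\leq n$. In addition, once $\liminf A_k\geq n$ is known, it yields $A_k\to n$, $L_p(t)=nt$, and $\int g_p\,d\mu=L_p'(0^+)=n$, for every $p$. This gives part (2) together with the first formulation of part (1). The second formulation of part (1) is \eqref{eq:jetcount}.

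\emph{Lower side (the crux).} The whole volume hypothesis enters here. By Lemma~\ref{lem:jet}, $\dim F^j_{k,p}\geq d_k-\binom{n+j-1}{n}$, and trivially $\dim F^j_{k,p}\geq0$. Hence
\[
kd_kA_k\geq\sum_{j=1}^{N_k}\Bigl(d_k-\binom{n+j-1}{n}\Bigr)=N_kd_k-\binom{n+N_k}{n+1},
\]
using the hockey-stick identity $\sum_{j=1}^{N}\binom{n+j-1}{n}=\binom{n+N}{n+1}$. Divide by $kd_k$, and use $d_k/k^n\to\vol(K)=(n+1)^n/n!$ (Lemma~\ref{lem:monomial}) together with $\binom{n+N_k}{n+1}\sim(n+1)^{n+1}k^{n+1}/(n+1)!$. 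This gives
\[
A_k\geq(n+1)-\frac{(n+1)^{n+1}/(n+1)!}{(n+1)^n/n!}+o(1)=(n+1)-1+o(1)=n+o(1).
\]
So $\liminf A_k\geq n$ exactly at the extremal volume, which closes the squeeze.

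\emph{Main obstacle.} The delicate step is not the counting, which is elementary. It is checking that the analytic chain is legitimate at every $p$. Two points need care. First, the convergence $\limsup g_{k,p}\leq g_p$ must hold $\mu$-a.e., and the existence of the initial velocity $g_p$ must be verified, so that Lemma~\ref{lem:derivative} gives $L_p'(0^+)=\int g_p\,d\mu$. Second, the Schwarz bound must survive the regularized envelope. All of this is supplied by Lemmas~\ref{lem:envelope}, \ref{lem:berndtsson} and \ref{lem:schwarz}. The proof then reduces to assembling these ingredients and verifying the asymptotic arithmetic above, including that the truncation at $N_k=(n+1)k$ is exactly the range in which the binomial bound is summed.
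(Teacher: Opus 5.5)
Your proposal is correct and follows essentially the same route as the paper. You obtain the upper bound $\limsup_kA_k\le n$ from Proposition~\ref{prop:bridge} (fed by Lemmas~\ref{lem:envelope}, \ref{lem:berndtsson}, \ref{lem:schwarz}, \ref{lem:layercake} and the total-variation convergence), and the lower bound $\liminf_kA_k\ge n$ from the jet count of Lemma~\ref{lem:jet} with the hockey-stick identity, which is the only place the volume hypothesis enters; the second conclusion of the bridge proposition then gives both parts at every $p$.
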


\begin{proof}
Fix $p\in X$. Lemma~\ref{lem:envelope} constructs the envelope $\psi_{p,t}$ with $\psi_{p,0}=\varphi$, slice bounds $0\leq\psi_{p,t}-\varphi\leq(n+1)t$, pointwise initial velocity $g_p\in[0,n+1]$, and $\limsup_kg_{k,p}\leq g_p$ pointwise. The slice convexity in $t$ of the envelope gives, for every $z$, the monotone convergence of difference quotients, so $g_p$ is a pointwise limit of measurable functions, and Lemma~\ref{lem:berndtsson} applies with $w_0=\varphi$ and $c=n+1$ (its one-dimensionality hypothesis holds by Lemma~\ref{lem:monomial} since $\intr(K)\cap\mathbb Z^n=\{0\}$, and $V_0=\vol(K)$ by Lemma~\ref{lem:potential}), so $L_p$ is finite and convex, $L_p(0)=0$, and $L'_{p,+}(0)=\int_Xg_p\,d\mu$. Lemma~\ref{lem:schwarz} gives $L_p(t)\leq nt$.

We now apply Proposition~\ref{prop:bridge} with $g_k=g_{k,p}$, $g=g_p$, $c=n+1$, $A_k=\int g_{k,p}\,d\mu_k$ (Lemma~\ref{lem:layercake}), and the total-variation convergence $\|\mu_k-\mu\|_{\mathrm{TV}}\to0$ of Lemma~\ref{lem:bergmanconv}. It gives $\limsup_kA_k\leq n$.

For the lower bound, Lemma~\ref{lem:jet} gives $\dim F_{k,p}^j\geq d_k-\binom{n+j-1}{n}$, so, summing over $1\leq j\leq N_k$ and using the hockey-stick identity $\sum_{j=1}^{N}\binom{n+j-1}{n}=\binom{n+N}{n+1}$,
\[
A_k\geq\frac{N_k}{k}-\frac{1}{kd_k}\binom{n+N_k}{n+1}.
\]
Here $N_k/k=n+1$, while $d_k/k^n\to\vol(K)=(n+1)^n/n!$ by Lemma~\ref{lem:monomial} and the volume hypothesis, and $\binom{n+N_k}{n+1}/k^{n+1}\to(n+1)^{n+1}/(n+1)!$; hence
\[
\liminf_kA_k\geq(n+1)-\frac{(n+1)^{n+1}/(n+1)!}{(n+1)^n/n!}=(n+1)-1=n.
\]
This is the only step of the present argument that uses the volume hypothesis. The second conclusion of Proposition~\ref{prop:bridge} now yields $A_k\to n$, $L'_{p,+}(0)=\int g_p\,d\mu=n$, and $L_p(t)=nt$ for every $t\geq0$. Assertion (1) and its equivalent discrete form \eqref{eq:jetcount} follow, and (2) holds for every $p$ since $p$ was arbitrary.
\end{proof}

\begin{rem}\label{rem:gatewaysimplex}
By Lemma~\ref{lem:model}, the centered standard simplex $S_n$ satisfies the hypotheses of Theorem~\ref{thm:gateway}, so all conclusions of this section hold for it.
\end{rem}

\section{Hypersurface exclusion and lattice widths}\label{sec:discrete}

In this section, we deduce from Theorem~\ref{thm:gateway} two discrete consequences for a body satisfying the hypotheses of Theorem~\ref{thm:main}: asymptotically, no algebraic hypersurface of degree below $(n+1)k$ passes through all lattice points of $\intr(kK)$, and every lattice width of $K$ is at least $n+1$. The first consequence is used in Sections~\ref{sec:rays} and~\ref{sec:simplex}; the second is of independent interest. On the Fano side, the hypersurface exclusion corresponds to the separation of jets at equality \cite[Proposition~3.2]{Zha25}, and the width bound corresponds to the Seshadri-constant saturation of \cite[Theorem~2.3]{Fuj18}; see Remark~\ref{rem:dictionary}. For a finite set $S\subset\mathbb R^n$, we write $\delta(S)$ for the least total degree of a nonzero real polynomial vanishing at every point of $S$. By Lemma~\ref{lem:jet} and \cite[Proposition~1.1]{Per00}, $\delta(S_k)-1$ is the largest integer $q$ for which $H_k$ generates $q$-jets at a point, and in fact at every point, of the dense torus.

\begin{lem}\label{lem:multiplication}
Let $S\subset\mathbb R^n$ be a finite set with $d$ elements. For $j\geq1$ let $\mathrm{ev}_j$ be the evaluation map from the real polynomials of total degree less than $j$ to functions on $S$, and put $f_j=d-\operatorname{rank}(\mathrm{ev}_j)$. Suppose that a nonzero polynomial $P$ of total degree $D$ vanishes at every point of $S$. Then, for every integer $j>D$,
\[
f_j\geq d-\binom{n+j-1}{n}+\binom{n+j-D-1}{n},
\]
and consequently, for every positive integer $N$,
\[
\sum_{j=1}^Nf_j\geq Nd-\binom{n+N}{n+1}+\binom{n+N-D}{n+1},
\]
where the last binomial coefficient is understood to be zero when $N\leq D$.
\end{lem}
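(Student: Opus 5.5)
The plan is to bound the kernel of $\mathrm{ev}_j$ from below. Rank--nullity gives
\[
\operatorname{rank}(\mathrm{ev}_j)=\binom{n+j-1}{n}-\dim\ker(\mathrm{ev}_j),
\]
since the space of real polynomials of total degree less than $j$ has dimension $\binom{n+j-1}{n}$. It therefore suffices to show $\dim\ker(\mathrm{ev}_j)\geq\binom{n+j-D-1}{n}$ whenever $j>D$.

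For this, take the multiplication map $Q\mapsto PQ$ from polynomials of total degree less than $j-D$ to polynomials of total degree less than $j$. This map lands in $\ker(\mathrm{ev}_j)$, because $P$ vanishes on $S$. It is injective, since $\mathbb R[x_1,\dots,x_n]$ is an integral domain and $P\neq0$. Its source has dimension $\binom{n+(j-D)-1}{n}=\binom{n+j-D-1}{n}$. Substituting into $f_j=d-\operatorname{rank}(\mathrm{ev}_j)$ gives the first inequality.

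For the summed inequality, use two bounds on $f_j$ depending on the range of $j$:
\begin{itemize}
\item for $j\leq D$, the trivial bound $f_j\geq d-\binom{n+j-1}{n}$, which holds because the rank is at most the dimension of the source;
\item for $j>D$, the bound just proved.
\end{itemize}
Summing over $1\leq j\leq N$ and applying the hockey-stick identity $\sum_{j=1}^N\binom{n+j-1}{n}=\binom{n+N}{n+1}$ gives the first two terms $Nd-\binom{n+N}{n+1}$.

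The extra term comes from $\sum_{j=D+1}^{N}\binom{n+j-D-1}{n}$. Re-indexing with $i=j-D$, this is $\sum_{i=1}^{N-D}\binom{n+i-1}{n}=\binom{n+N-D}{n+1}$, again by the hockey-stick identity. When $N\leq D$ the sum is empty, which matches the stated convention that the last binomial coefficient is zero.

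No step is a real obstacle; the only point needing care is the binomial bookkeeping at the boundary $j=D$ versus $j=D+1$. At $j=D+1$ the bound gives $\binom{n}{n}=1$, coming from the constant multiple of $P$, which is consistent with the formula.
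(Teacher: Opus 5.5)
Your proposal is correct and matches the paper's proof step for step: rank--nullity reduces the claim to a lower bound on $\dim\ker(\mathrm{ev}_j)$, multiplication by $P$ gives an injective map into $\ker(\mathrm{ev}_j)$, and the hockey-stick identity with the reindexing $i=j-D$ gives the summed inequality. The only difference is wording: you use the trivial rank bound for $j\le D$, where the paper writes $q_j\ge 0$, and these say the same thing.
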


\begin{proof}
Let $q_j$ be the dimension of the kernel of $\mathrm{ev}_j$. Since the domain of $\mathrm{ev}_j$ has dimension $\binom{n+j-1}{n}$, rank--nullity gives
\begin{equation}\label{eq:additive}
f_j=d-\binom{n+j-1}{n}+q_j.
\end{equation}
Fix $j>D$. Multiplication by $P$ is an injective linear map from the polynomials of degree less than $j-D$ into the kernel of $\mathrm{ev}_j$: injectivity holds because the polynomial ring is an integral domain, and the image lies in the kernel because $(PQ)(s)=P(s)Q(s)=0$ for every $s\in S$. Hence $q_j\geq\binom{n+j-D-1}{n}$, proving the first inequality. Summing \eqref{eq:additive} over $1\leq j\leq N$, using $q_j\geq0$ for $j\leq D$ and the bound above for $j>D$, and evaluating the two binomial sums by the hockey-stick identity $\sum_{j=1}^{N}\binom{n+j-1}{n}=\binom{n+N}{n+1}$, proves the second inequality.
\end{proof}

\begin{thm}[Hypersurface exclusion]\label{thm:hypersurface}
Let $K\subset\mathbb R^n$ be a full-dimensional compact convex body with barycenter $0$, $\intr(K)\cap\mathbb Z^n=\{0\}$, and $\vol(K)=(n+1)^n/n!$. Then, for every $\varepsilon>0$ and all sufficiently large $k$, no nonzero real polynomial of total degree at most $(n+1-\varepsilon)k$ vanishes at every point of $S_k$. Equivalently,
\[
\liminf_{k\to\infty}\frac{\delta(S_k)}{k}\geq n+1.
\]
\end{thm}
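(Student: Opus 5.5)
We argue by contradiction, combining the multiplication bound of Lemma~\ref{lem:multiplication} with the sharp limit $A_k\to n$ of Theorem~\ref{thm:gateway}(1). Suppose that for some $\varepsilon>0$ there are infinitely many $k$ admitting a nonzero real polynomial $P_k$ of total degree $D_k\leq(n+1-\varepsilon)k$ vanishing on $S_k$. We may take $\varepsilon\in(0,n+1)$, and we pass to such a subsequence.

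First we reconcile real and complex ranks. The quantity $r_{k,j}=\dim R_{<j}(S_k)$ in \eqref{eq:jetcount} is the rank of the matrix $(m^\alpha)_{|\alpha|<j,\,m\in S_k}$. This matrix has integer entries, so its complex rank equals its real rank. Hence $d_k-r_{k,j}$ is exactly the quantity $f_j$ of Lemma~\ref{lem:multiplication} for $S=S_k$, $d=d_k$. Lemma~\ref{lem:multiplication} with $N=N_k=(n+1)k$ and $D=D_k$ then gives
\[
kd_kA_k=\sum_{j=1}^{N_k}(d_k-r_{k,j})\geq N_kd_k-\binom{n+N_k}{n+1}+\binom{n+N_k-D_k}{n+1}.
\]
Since the last binomial coefficient is nondecreasing in $N_k-D_k\geq\varepsilon k$, we may replace $D_k$ by $(n+1-\varepsilon)k$, up to an integer rounding that is negligible in the limit.

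Next we divide by $kd_k$ and pass to the limit along the subsequence. We use $d_k/k^n\to(n+1)^n/n!$ from Lemma~\ref{lem:monomial} and the volume hypothesis. The first two terms contribute $(n+1)-1=n$, exactly as in the proof of Theorem~\ref{thm:gateway}. The extra term contributes
\[
\frac{\binom{n+\lceil\varepsilon k\rceil}{n+1}}{kd_k}\longrightarrow\frac{\varepsilon^{n+1}/(n+1)!}{(n+1)^n/n!}=\frac{\varepsilon^{n+1}}{(n+1)^{n+1}}>0.
\]
Therefore $\limsup_kA_k\geq n+\varepsilon^{n+1}/(n+1)^{n+1}>n$, which contradicts $A_k\to n$.

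The equivalent formulation $\liminf_k\delta(S_k)/k\geq n+1$ is the same statement: a polynomial of degree at most $(n+1-\varepsilon)k$ vanishing on $S_k$ exists exactly when $\delta(S_k)\leq(n+1-\varepsilon)k$. The only step requiring care is the identification of the complex rank in Lemma~\ref{lem:jet} with the real evaluation rank, together with the monotonicity and rounding in the binomial estimate. Once these are in place, the asymptotic computation is routine. All the analytic content is already contained in Theorem~\ref{thm:gateway}.
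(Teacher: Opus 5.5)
Your argument is correct and follows the paper's strategy: you argue by contradiction against $A_k\to n$ from Theorem~\ref{thm:gateway}(1), with multiplication by $P_k$ (Lemma~\ref{lem:multiplication}) supplying extra kernel dimensions. The only difference is bookkeeping. You apply the summed inequality of Lemma~\ref{lem:multiplication} directly, so the main term $N_kd_k-\binom{n+N_k}{n+1}$ has the normalized limit $n$ already computed in the proof of Theorem~\ref{thm:gateway}, and the full extra term gives the explicit gap $\varepsilon^{n+1}/(n+1)^{n+1}$. The paper instead compares with the truncated generic sum $\sum_j\max\bigl(d_k-\binom{n+j-1}{n},0\bigr)$, evaluated by Riemann sums, and collects the gain only on a window of about $\varepsilon k/4$ indices $j$.
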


\begin{proof}
Write $V=(n+1)^n/n!$ and $f_{k,j}=d_k-r_{k,j}$, where $r_{k,j}$ is as in \eqref{eq:jetcount}; note that the evaluation matrix of the monomials of degree less than $j$ on $S_k$ has integer entries, so its rank over $\mathbb R$ equals its rank over $\mathbb C$ and $f_{k,j}$ computes the codimension defect in Lemma~\ref{lem:jet} for real polynomials as well. Theorem~\ref{thm:gateway}(1) says
\begin{equation}\label{eq:jetsumlimit}
\frac{1}{kd_k}\sum_{j=1}^{N_k}f_{k,j}\longrightarrow n,\qquad N_k=(n+1)k.
\end{equation}
We first record the generic part of the sum. Put $g_{k,j}=\max\bigl(d_k-\binom{n+j-1}{n},0\bigr)$, so that $f_{k,j}\geq g_{k,j}$, since the rank $r_{k,j}$ is at most both $d_k$ and $\binom{n+j-1}{n}$. Uniformly for $1\leq j\leq N_k$ we have $\binom{n+j-1}{n}/k^n-(j/k)^n/n!\to0$, and $d_k/k^n\to V$ by Lemma~\ref{lem:monomial}; hence, by Riemann sums,
\begin{equation}\label{eq:genericsum}
\frac{1}{kd_k}\sum_{j=1}^{N_k}g_{k,j}\longrightarrow\frac1V\int_0^{n+1}\Bigl(V-\frac{s^n}{n!}\Bigr)ds=\frac{(n+1)V-V}{V}=n,
\end{equation}
where we used $\int_0^{n+1}s^n\,ds/n!=(n+1)^{n+1}/(n+1)!=V$ and that the integrand is nonnegative on $[0,n+1]$.

Now fix $\varepsilon>0$. It suffices to treat $0<\varepsilon<n+1$: for $\varepsilon=n+1$ a nonzero polynomial of degree at most $0$ is a nonzero constant, which does not vanish at $0\in S_k$, and for $\varepsilon>n+1$ the degree bound is negative and there is no candidate polynomial at all. Suppose, for contradiction, that along an infinite set of $k$ there are nonzero polynomials $P_k$ of degree $D_k\leq(n+1-\varepsilon)k$ vanishing on $S_k$. Let $J_k$ be the set of integers $j$ with $(n+1-\varepsilon/2)k\leq j\leq(n+1-\varepsilon/4)k$. For $j\in J_k$ we have $j/k\leq n+1-\varepsilon/4$, so $d_k-\binom{n+j-1}{n}\geq ak^n$ for a constant $a>0$ depending only on $n$ and $\varepsilon$ and all large $k$; in particular $g_{k,j}=d_k-\binom{n+j-1}{n}$ there. Also $j-D_k\geq\varepsilon k/4$ on $J_k$, so Lemma~\ref{lem:multiplication} gives, for $j\in J_k$,
\[
f_{k,j}\geq g_{k,j}+\binom{n+\lceil\varepsilon k/4\rceil-1}{n}\geq g_{k,j}+bk^n
\]
for a constant $b>0$ depending only on $n$ and $\varepsilon$. The set $J_k$ contains at least $\varepsilon k/8$ integers for all large $k$, while $f_{k,j}\geq g_{k,j}$ elsewhere. Hence, along the supposed subsequence,
\[
\frac{1}{kd_k}\sum_{j=1}^{N_k}f_{k,j}\geq\frac{1}{kd_k}\sum_{j=1}^{N_k}g_{k,j}+\frac{(\varepsilon/8)\,b\,k^{n+1}}{kd_k},
\]
whose lower limit is at least $n+\varepsilon b/(8V)>n$ by \eqref{eq:genericsum}, contradicting \eqref{eq:jetsumlimit}. The equivalent formulation in terms of $\delta(S_k)$ is immediate.
\end{proof}

\begin{lem}\label{lem:principal}
Let $j$ be a positive integer and $b\in\mathbb Z^n$. The principal lattice set
\[
G=\Bigl\{b+a:a\in\mathbb Z_{\geq0}^n,\ \sum_ia_i<j\Bigr\}
\]
has $\binom{n+j-1}{n}$ elements and is unisolvent for real polynomials of total degree less than $j$: the only such polynomial vanishing at every point of $G$ is the zero polynomial.
\end{lem}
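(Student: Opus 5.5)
The plan is a translation reduction followed by induction on $j$, with the case $n=1$ handled inside the same argument. First, the cardinality. The set $G$ is in bijection with $\{a\in\mathbb Z_{\geq0}^n:\sum_ia_i<j\}$, which is the set of exponent vectors of monomials of total degree less than $j$. That set has $\binom{n+j-1}{n}$ elements, by stars and bars or by the same count used in Lemma~\ref{lem:jet}.

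Next, the translation reduction. Replacing a polynomial $Q(x)$ by $Q(x+b)$ preserves total degree and carries the zeros on $G$ to zeros on $G-b$. So I may assume $b=0$.

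Now I prove unisolvence by induction on $j$, for all $n$ at once. For $j=1$, $G=\{0\}$ and a constant vanishing at $0$ is zero. For $j\geq2$, let $Q$ have total degree less than $j$ and vanish on $G$. Consider the hyperplane section $x_n=0$. Writing $x'=(x_1,\dots,x_{n-1})$, the points of $G$ with $a_n=0$ form the principal lattice set $\{a'\in\mathbb Z_{\geq0}^{n-1}:\sum a'_i<j\}$ in dimension $n-1$. The restriction $Q(x',0)$ has degree less than $j$ and vanishes there. To avoid a separate induction on $n$, I instead slice by the affine hyperplane $x_1+\dots+x_n=j-1$, or more simply do a double induction on $(n,j)$. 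I commit to the double induction, with base cases $n=1$ (a one-variable polynomial of degree less than $j$ with $j$ distinct roots $0,\dots,j-1$ vanishes) and $j=1$. The induction in $n$ gives $Q(x',0)=0$, so $x_n$ divides $Q$, and I write $Q=x_nQ_1$ with $\deg Q_1<j-1$.

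For every $a\in G$ with $a_n\geq1$, the value $Q_1(a)=Q(a)/a_n$ vanishes. Shifting by $e_n$, these points are exactly $e_n+\{a\in\mathbb Z^n_{\geq0}:\sum a_i<j-1\}$, a principal lattice set with parameter $j-1$ and base point $e_n$. The induction hypothesis in $j$, after the translation reduction, gives $Q_1=0$, hence $Q=0$.

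There is no real obstacle. The only point needing care is the bookkeeping: checking that the points with $a_n\geq1$ are exactly the translate of the smaller principal set by $e_n$, and that dividing by $a_n\neq0$ is legitimate there. Optionally, the cardinality count together with unisolvence also shows that evaluation on $G$ is a bijection from polynomials of degree less than $j$ to functions on $G$, which is presumably how the lemma will be used later, for instance in Proposition~\ref{prop:simplexdelta}.
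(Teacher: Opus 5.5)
Your proof is correct, but it takes a different route from the paper.

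The paper proves unisolvence in one stroke, with no induction. It changes basis to the binomial polynomials $C_\alpha(x)=\prod_i\binom{x_i}{\alpha_i}$, $|\alpha|<j$, and observes that $C_\alpha(\beta)=0$ unless $\alpha\leq\beta$ componentwise, while $C_\beta(\beta)=1$. Hence the evaluation matrix is unitriangular after ordering nodes and basis by a linear extension of the componentwise order. The matrix is square precisely because of the cardinality count.

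You instead use a slicing argument:
\begin{enumerate}
\item The facet $\{a_n=0\}$ of $G$ is a principal lattice set in dimension $n-1$, so induction on $n$ gives $x_n\mid Q$.
\item The remaining nodes $\{a\in G: a_n\geq1\}$ form the principal lattice set $e_n+\{a\in\mathbb Z^n_{\geq0}:\sum_ia_i<j-1\}$, on which $Q/x_n$ vanishes.
\item Induction on $j$ then gives $Q/x_n=0$.
\end{enumerate}
The double induction with base cases $n=1$ and $j=1$ is well founded. Your translation reduction is exactly what lets the inductive hypothesis apply at the shifted base point $e_n$.

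As for what each approach buys: the paper's version is shorter and gives bijectivity of evaluation on $G$ directly. It even shows that the evaluation matrix in the binomial basis is integral with determinant one. Yours proves only injectivity, with bijectivity following from the count, which is all that Proposition~\ref{prop:simplexdelta} and Theorem~\ref{thm:apex} use. In exchange, it needs nothing beyond the one-variable root bound and the factor theorem. It also adapts verbatim to any node set that can be peeled off hyperplane by hyperplane in the same way.
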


\begin{proof}
Translation by $-b$ preserves cardinality, degree, and vanishing, so we may assume $b=0$, in which case $G=\{a\in\mathbb Z_{\geq0}^n:\sum_ia_i\leq j-1\}$, whose cardinality is the number of monomials of degree less than $j$, namely $\binom{n+j-1}{n}$. For a multi-index $\alpha$ with $\sum_i\alpha_i\leq j-1$, let $C_\alpha(x)=\prod_i\binom{x_i}{\alpha_i}$, a polynomial of total degree $\sum_i\alpha_i$ whose leading monomial is a nonzero multiple of $x^\alpha$; the family $(C_\alpha)$ is therefore a basis of the polynomials of degree less than $j$. At a node $\beta\in G$ we have $C_\alpha(\beta)=\prod_i\binom{\beta_i}{\alpha_i}$, which vanishes unless $\alpha_i\leq\beta_i$ for every $i$, and equals $1$ at $\alpha=\beta$. Ordering both the basis and the nodes by a linear extension of the componentwise partial order, the evaluation matrix is triangular with unit diagonal, hence invertible, and unisolvence follows.
\end{proof}

\begin{prop}\label{prop:simplexdelta}
For the centered standard simplex $S_n$ and every positive integer $k$,
\[
\delta\bigl(S_k(S_n)\bigr)=(n+1)k-n.
\]
In particular, the threshold in Theorem~\ref{thm:hypersurface} is asymptotically attained.
\end{prop}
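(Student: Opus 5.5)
First I make the lattice set explicit. By Lemma~\ref{lem:model}, $\intr(kS_n)=\{x: x_i>-k,\ \sum_i x_i<k\}$. Hence
\[
S_k(S_n)=\Bigl\{m\in\mathbb Z^n: m_i\geq -k+1,\ \sum_i m_i\leq k-1\Bigr\}.
\]
Setting $a=m+(k-1)(1,\dots,1)$, this becomes $\{a\in\mathbb Z_{\geq0}^n:\sum_ia_i\leq k-1+n(k-1)=(n+1)(k-1)\}$. So $S_k(S_n)$ is the principal lattice set $G$ of Lemma~\ref{lem:principal} with $b=-(k-1)(1,\dots,1)$ and $j=(n+1)(k-1)+1=(n+1)k-n$.

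The lower bound $\delta\geq(n+1)k-n$ is then immediate from Lemma~\ref{lem:principal}. That lemma says no nonzero polynomial of total degree less than $j=(n+1)k-n$ vanishes on $G=S_k(S_n)$.

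For the upper bound, I will exhibit a nonzero polynomial of degree exactly $j$ vanishing on $G$. In the shifted coordinates $a$, the set $G$ is the union of the hyperplane slices $\{\sum_i a_i=\ell\}$ for $\ell=0,1,\dots,j-1$. So I take
\[
P(a)=\prod_{\ell=0}^{j-1}\Bigl(\sum_i a_i-\ell\Bigr).
\]
This is a nonzero polynomial of degree $j$ vanishing at every node. Translating back to $m$ gives a polynomial of degree $(n+1)k-n$ vanishing on $S_k(S_n)$, so $\delta\leq(n+1)k-n$. A product of $j$ linear forms $a_1-\ell$ would work equally well.

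The only step requiring care is the bookkeeping of strict versus non-strict inequalities in the description of $\intr(kS_n)\cap\mathbb Z^n$. I will also check the degenerate case $k=1$: there $j=1$, $G=\{0\}$, and $\delta=1$ is realized by a linear form vanishing at the origin, consistent with the formula. The final sentence of the proposition follows from $\delta(S_k(S_n))/k=(n+1)-n/k\to n+1$, matching the bound of Theorem~\ref{thm:hypersurface}.
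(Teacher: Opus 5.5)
Your proof is correct and follows the paper's argument essentially verbatim: you identify $S_k(S_n)$ with a translated principal lattice set with $j=(n+1)k-n$, take the lower bound from Lemma~\ref{lem:principal}, and get the upper bound from the product $\prod_{\ell=0}^{j-1}(\sum_i a_i-\ell)$. The only cosmetic difference is that you apply the lemma directly with $b=-(k-1)(1,\dots,1)$, whereas the paper first translates to the grid with $b=0$.
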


\begin{proof}
By Lemma~\ref{lem:model}, an integer point $m$ lies in $\intr(kS_n)$ exactly when $m_i>-k$ for every $i$ and $\sum_im_i<k$; equivalently, $b=m+(k-1)(1,\dots,1)$ satisfies $b\in\mathbb Z^n_{\geq0}$ and $\sum_ib_i\leq q:=(n+1)(k-1)$. Translation preserves total degree, so it suffices to compute the least vanishing degree of the simplex grid $B_q=\{b\in\mathbb Z_{\geq0}^n:\sum_ib_i\leq q\}$, which is the principal lattice set of Lemma~\ref{lem:principal} with $b=0$ and $j=q+1$. By that lemma, no nonzero polynomial of degree at most $q$ vanishes on $B_q$. On the other hand, $\prod_{r=0}^{q}\bigl(\textstyle\sum_ix_i-r\bigr)$ has degree $q+1$ and vanishes on $B_q$. Therefore $\delta(B_q)=q+1=(n+1)k-n$.
\end{proof}

\begin{cor}\label{cor:width}
Let $K$ be as in Theorem~\ref{thm:hypersurface}. Then $w_u(K)\geq n+1$ for every primitive $u\in\mathbb Z^n$. Moreover, for the centered standard simplex, $w_u(S_n)\geq n+1$ for every primitive $u$, with equality for $u=e_i$ and for $u=(1,\dots,1)$.
\end{cor}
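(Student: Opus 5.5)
The plan is to derive the width bound from Theorem~\ref{thm:hypersurface} by producing, for any body of small width in direction $u$, a low-degree polynomial that vanishes on $S_k$.

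Fix a primitive $u\in\mathbb Z^n$ and put $a=\min_K\langle u,\cdot\rangle$, $b=\max_K\langle u,\cdot\rangle$, so $w=w_u(K)=b-a$. For $m\in S_k=\mathbb Z^n\cap\intr(kK)$, the value $\langle u,m\rangle$ is an integer in the open interval $(ka,kb)$. That interval contains at most $\lceil kw\rceil$ integers, and certainly at most $kw+1$. Consider the one-variable product
\[
P_k(x)=\prod_{r\in\mathbb Z\cap(ka,kb)}\bigl(\langle u,x\rangle-r\bigr).
\]
This is a nonzero real polynomial on $\mathbb R^n$, since $u\neq0$. It vanishes at every point of $S_k$, and its total degree is at most $kw+1$. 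Hence $\delta(S_k)\le kw+1$.

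Theorem~\ref{thm:hypersurface} gives $\liminf_k\delta(S_k)/k\ge n+1$, so $w+1/k\ge \delta(S_k)/k$ forces $w\ge n+1$. To make this explicit: if $w<n+1$, take $\varepsilon=(n+1-w)/2$; for large $k$ we have $kw+1\le(n+1-\varepsilon)k$, which contradicts the theorem. The argument never uses primitivity, so the bound in fact holds for every nonzero $u$.

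For the simplex there are two facts to check.
\begin{itemize}
\item The inequality $w_u(S_n)\ge n+1$ holds by the first part, since $S_n$ satisfies the hypotheses by Lemma~\ref{lem:model}. One could also argue directly: a linear functional $\langle u,\cdot\rangle$ attains its extrema over $S_n$ at vertices, and vertex differences are $(n+1)(e_i-e_j)$ or $(n+1)e_i$. So the width is $n+1$ times the range of the integers $\{0,u_1,\dots,u_n\}$, which is at least $n+1$ for $u\ne0$.
\item For the equality cases, $u=e_i$ takes values $-1$ and $n$ on the vertices, giving width $n+1$. The direction $u=(1,\dots,1)$ takes values $-n$ at $s_0$ and $1$ at every other vertex, again giving width $n+1$.
\end{itemize}

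There is no real obstacle here. The only care needed is the integer count of $(ka,kb)\cap\mathbb Z$ and absorbing the $+1$ into the $\varepsilon k$ slack.
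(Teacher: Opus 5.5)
Your proposal is correct and is essentially the paper's argument: the paper also bounds $\delta(S_k)$ by the degree of a product of linear factors $\langle u,x\rangle-t$ over integers $t$ (it uses the values actually taken on $S_k$, while you use all integers in $(ka,kb)$), obtains $\delta(S_k)\leq kw_u(K)+1$, and then applies Theorem~\ref{thm:hypersurface}. For the simplex, it computes $w_u(S_n)=(n+1)\bigl(\max(0,u_1,\dots,u_n)-\min(0,u_1,\dots,u_n)\bigr)$ exactly as in your vertex computation, and, as you observed, primitivity of $u$ is never used there either.
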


\begin{proof}
Fix a nonzero $u\in\mathbb Z^n$ and let $T_k$ be the set of integers $\langle u,m\rangle$ as $m$ ranges over $S_k$. The polynomial $\prod_{t\in T_k}(\langle u,x\rangle-t)$ is nonzero, has total degree $\#T_k$, and vanishes on $S_k$, so $\delta(S_k)\leq\#T_k$. Every element of $T_k$ lies between $k\min_{x\in K}\langle u,x\rangle$ and $k\max_{x\in K}\langle u,x\rangle$, so $\#T_k\leq kw_u(K)+1$. Combining with Theorem~\ref{thm:hypersurface},
\[
n+1\leq\liminf_k\frac{\delta(S_k)}{k}\leq w_u(K).
\]
For the simplex, we have $w_u(S_n)=(n+1)w_u(\Delta_n)$, and for nonzero integer $u$,
\[
w_u(\Delta_n)=\max(0,u_1,\dots,u_n)-\min(0,u_1,\dots,u_n)\geq1,
\]
with value $1$ at $u=e_i$ and at $u=(1,\dots,1)$.
\end{proof}

\section{Toric limit rays and the pyramid structure}\label{sec:rays}

In this section, we degenerate the point-jet filtrations of Section~\ref{sec:gateway} radially along a generic direction, obtaining monomial filtrations with integer weights; we show that the associated rays subconverge to affine Pr\'ekopa rays, which the equality case of the Pr\'ekopa--Leindler inequality forces to be translation rays; and we prove that a translation ray forces $K$ to be a pyramid with prescribed cap volumes (Theorem~\ref{thm:pyramid}). On the Fano side, the counterpart of the pyramid structure is the Newton--Okounkov cone rigidity of \cite[Proposition~4.6]{Zha22}; the radial degeneration has no Fano-side antecedent, while the Pr\'ekopa--Leindler equality enters as the real counterpart of Berndtsson convexity (Remark~\ref{rem:dictionary}). Throughout, $K$ satisfies the hypotheses of Theorem~\ref{thm:main}, and $\varphi$, $H_k$, $B_k$, $\mu_k$, $\mu$, $S_k$, $d_k$, $N_k$, and the normalized monomials $s_{k,m}$ are as in Sections~\ref{subsec:transport} and~\ref{sec:gateway}.

\subsection{Radial degeneration of nested monomial subspaces}\label{subsec:degeneration}

\begin{lem}\label{lem:grassmann}
Let $S\subset\mathbb Z^n$ be a nonempty finite set and let $H$ be a finite-dimensional complex Hilbert space with orthonormal basis $(e_m)_{m\in S}$ on which the torus $(\mathbb C^*)^n$ acts linearly by $U_ae_m=a^me_m$. Let $F^1\supseteq F^2\supseteq\dots\supseteq F^N$ be a decreasing chain of subspaces of $H$. For a nonzero subspace $F$ of dimension $r$, choose a nonzero $\xi_F\in\bigwedge^rF$, expand $\xi_F=\sum_{|I|=r}c_Ie_I$ in the exterior basis $e_I=\bigwedge_{m\in I}e_m$, formed with respect to a fixed total order on $S$ (the order affects only the signs of the $c_I$, never their vanishing), and let
\[
P(F)=\conv\Bigl\{\sum_{m\in I}m:c_I\neq0\Bigr\}\subset\mathbb R^n
\]
be its weight polytope, which does not depend on the choice of $\xi_F$. Call $v\in\mathbb R^n$ \emph{admissible} for the chain if, for every $j$ with $F^j\neq0$, the linear functional $\langle v,\cdot\rangle$ has a unique maximizer on $P(F^j)$; the inadmissible $v$ lie in a finite union of proper linear hyperplanes, namely the perpendiculars of the nonzero differences of vertices of the finitely many weight polytopes. Fix an admissible $v$ and put $D_t=U_{(e^{tv_1},\dots,e^{tv_n})}$. Then, for each $j$, there is a subset $I_j\subset S$ with $\#I_j=\dim F^j$ such that $D_tF^j$ converges in the Grassmannian to $\operatorname{span}\{e_m:m\in I_j\}$ as $t\to\infty$, and the orthogonal projections $P^j_t$ onto $D_tF^j$ converge in operator norm to the coordinate projections; the limit index sets are nested, $I_1\supseteq I_2\supseteq\dots\supseteq I_N$. Consequently $Q_t=\sum_{j=1}^NP^j_t$ converges in operator norm to the diagonal operator with $Q_\infty e_m=q_me_m$, where $q_m=\#\{j:m\in I_j\}\in\{0,1,\dots,N\}$ and
\[
\sum_{m\in S}q_m=\sum_{j=1}^N\dim F^j.
\]
Moreover, if $e_m(z)=c_mz^m$ with $c_m\neq0$ and $F^j=F_{1}^j$ is the subspace of functions vanishing to order at least $j$ at $1=(1,\dots,1)$, then for $p_t=(e^{-tv_1},\dots,e^{-tv_n})$ one has $F_{p_t}^j=D_tF_1^j$, so the orthogonal projection onto $F_{p_t}^j$ is $P_t^j$.
\end{lem}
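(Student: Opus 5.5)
The plan is to treat $F^j$ as the Pl\"ucker point $[\xi_{F^j}]\in\mathbb P(\bigwedge^rH)$ and follow it under the torus flow. We have $\bigwedge^rD_t\,e_I=e^{t\langle v,\sigma_I\rangle}e_I$, where $\sigma_I=\sum_{m\in I}m$. Hence
\[
\bigwedge^rD_t\,\xi_F=\sum_Ic_Ie^{t\langle v,\sigma_I\rangle}e_I .
\]

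\emph{Independence of choices.} A different choice of $\xi_F$ rescales it by a nonzero constant. A different order on $S$ changes the signs of the $c_I$. Neither changes the support $\{I:c_I\neq0\}$, so $P(F)$ is well defined.

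\emph{Admissibility.} The inadmissible $v$ are exactly those for which $\langle v,\cdot\rangle$ is constant on an edge of some $P(F^j)$. Such a $v$ is perpendicular to a difference of vertices. There are finitely many polytopes and finitely many vertex differences, so these $v$ lie in finitely many proper hyperplanes.

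\emph{Limit of each subspace.} Fix admissible $v$. Let $\sigma^*$ be the unique maximizing vertex of $P(F^j)$; it is a vertex, so some support element $I$ has $\sigma_I=\sigma^*$. The key point, which I expect to be the main obstacle, is that this $I$ is unique. Two distinct index sets $I\neq I'$ can have the same sum $\sigma_I=\sigma_{I'}$, so a vertex weight need not determine its index set. I would resolve this using that $F^j$ is a subspace. Put $F^j$ in reduced row echelon form with respect to a total order on $S$ refining the $v$-order, breaking ties lexicographically. Then the lex-greatest nonzero Pl\"ucker coordinate is the set of pivot columns $I_j$. The subtle issue is whether $v$-maximality coincides with this lex choice when weights tie. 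If it does not, I would enlarge the definition of admissible to require distinct $v$-values $\langle v,m\rangle$ on $S$. This is itself a finite hyperplane condition, and I expect it is implicit in the statement's hyperplane description via the differences $\sigma_I-\sigma_{I'}$. With distinct values on $S$, the maximal $\langle v,\sigma_I\rangle$ over $r$-subsets of a Pl\"ucker support is attained by the greedy pivot set, uniquely by the Pl\"ucker/matroid exchange property.

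\emph{Convergence.} Granting uniqueness, dividing by $e^{t\langle v,\sigma^*\rangle}$ shows $[\bigwedge D_t\xi]\to[e_{I_j}]$. Since the Pl\"ucker embedding is a homeomorphism onto its image, $D_tF^j\to\operatorname{span}\{e_m:m\in I_j\}$ in the Grassmannian. Orthogonal projections depend continuously on the subspace in operator norm, which gives $P_t^j\to$ the coordinate projection.

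\emph{Nesting.} Each inclusion $D_tF^{j+1}\subseteq D_tF^j$ is a closed condition and passes to the limit. Thus the coordinate subspaces are nested, i.e.\ $I_{j+1}\subseteq I_j$.

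\emph{The operator $Q_t$.} The sum $Q_t=\sum_jP_t^j$ converges to $\sum_j\operatorname{proj}_{I_j}$, which is diagonal with entries $q_m=\#\{j:m\in I_j\}$. Taking traces,
\[
\sum_mq_m=\sum_j\#I_j=\sum_j\dim F^j .
\]

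\emph{The final claim.} Write $D_tf(z)=f(a^{-1}\cdot z)$ up to the normalizing constants $c_m$, with $a=(e^{tv_i})$. Then $D_t(c_mz^m)=a^mc_mz^m$, which matches precomposition with the torus translation $z\mapsto a\cdot z$ in the convention $U_a e_m=a^me_m$. This translation sends the point $p_t=a^{-1}$ to $1$. Vanishing order at a point is invariant under this biholomorphism, so a function vanishes to order at least $j$ at $1$ if and only if its transform vanishes to order at least $j$ at $p_t$. Hence $F^j_{p_t}=D_tF^j_1$, and the orthogonal projection onto $F^j_{p_t}$ is $P_t^j$.
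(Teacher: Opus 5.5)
There is a genuine gap, and it sits at the step you yourself flag as the main obstacle. Under the admissibility actually stated (a unique maximizing vertex $\sigma^*$ of $P(F^j)$), you never show that only one $I$ with $c_I\neq0$ has $\sigma_I=\sigma^*$. The row-echelon/lex argument is left unresolved. The fallback, requiring $\langle v,\cdot\rangle$ to be injective on $S$, changes the hypothesis.

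That condition is not implicit in the statement, whose exceptional hyperplanes are perpendicular only to differences of vertices of the weight polytopes. For example, if every $F^j$ is a coordinate subspace, each $P(F^j)$ is a single point, so every $v$ is admissible, including $v=0$ and any $v$ with ties on $S$. So you prove a weaker lemma. That weaker version would in fact suffice downstream: adding the hyperplanes $(m-m')^\perp$, $m\neq m'\in S_k$, keeps the set $E$ of Proposition~\ref{prop:weights} a countable union of proper hyperplanes. But it is not the statement. Also, in your admissibility paragraph, ``exactly'' should be ``only if''.

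The missing idea is that uniqueness is a consequence, not an input. The paper argues as follows.
\begin{itemize}
\item The normalized limit $\xi_{j,\infty}=\sum_{\sigma_I=\sigma^*}c_Ie_I$ is decomposable, because the Pl\"ucker image is closed.
\item It is an eigenvector of every $\bigwedge^rU_a$ with the single character $a^{\sigma^*}$, so the limit subspace is invariant under every $U_a$.
\item Choose $b$ with the numbers $b^m$, $m\in S$, pairwise distinct; this is possible because the elements of $S$ are distinct lattice points. The spectral projections of $U_b$ onto the lines $\mathbb Ce_m$ then preserve the limit subspace.
\item Hence the limit is a coordinate subspace, and its Pl\"ucker vector is a multiple of a single $e_{I_j}$. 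This forces exactly one $I$ in the support to have $\sigma_I=\sigma^*$.
\end{itemize}

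The same conclusion follows in your matroid language. Suppose two bases $I\neq J$ of the support both satisfy $\sigma_I=\sigma_J=\sigma^*$. Take $a\in I\setminus J$. By symmetric basis exchange there is $b\in J\setminus I$ with $I'=(I\setminus\{a\})\cup\{b\}$ and $J'=(J\setminus\{b\})\cup\{a\}$ both bases. Since $\sigma_{I'}+\sigma_{J'}=2\sigma^*$, both attain the maximal $v$-weight. Yet $\sigma_{I'}=\sigma^*-a+b\neq\sigma^*$, which contradicts the uniqueness of the maximizer on $P(F^j)$.

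A minor slip in the last paragraph: $D_tf(z)=f(a\cdot z)$, not $f(a^{-1}\cdot z)$. Your next sentence uses the correct convention, and the conclusion $F^j_{p_t}=D_tF^j_1$ is right.
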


\begin{proof}
The polytope $P(F)$ is well defined since $\xi_F$ is unique up to a nonzero scalar. For the hyperplane description of inadmissibility: if $\langle v,\cdot\rangle$ has a non-unique maximizer on a polytope, then the maximizing face has positive dimension and contains two distinct vertices of the polytope, on which $\langle v,\cdot\rangle$ takes equal values; this confines $v$ to the union of the perpendicular hyperplanes of the nonzero vertex differences, and over the finitely many $j$ that union is a finite union of proper hyperplanes.

Fix $j$ with $r=\dim F^j>0$ and $\xi_j=\sum_Ic_Ie_I$ as above. Since $D_te_m=e^{t\langle v,m\rangle}e_m$,
\[
{\textstyle\bigwedge^r}D_t(\xi_j)=\sum_Ic_Ie^{t\langle v,\sum_{m\in I}m\rangle}e_I.
\]
Let $a_j$ be the unique maximizer of $\langle v,\cdot\rangle$ on $P(F^j)$; it is a vertex of the weight polytope, hence one of the lattice points $\sum_{m\in I}m\in\mathbb Z^n$. Dividing by $e^{t\langle v,a_j\rangle}$ and letting $t\to\infty$, every term whose weight sum differs from $a_j$ tends to zero, so the projective class of $\bigwedge^rD_t(\xi_j)$ converges to that of the nonzero vector $\xi_{j,\infty}=\sum_{\sum_{m\in I}m=a_j}c_Ie_I$. The Pl\"ucker embedding has closed image, so $\xi_{j,\infty}$ is decomposable and represents the Grassmannian limit $F_{j,\infty}$ of $D_tF^j$. Each $e_I$ occurring in $\xi_{j,\infty}$ satisfies $U_ae_I=a^{a_j}e_I$, so the Pl\"ucker line of $F_{j,\infty}$ is fixed by every $\bigwedge^rU_a$, and $F_{j,\infty}$ is invariant under every $U_a$ by the equivariance and injectivity of the Pl\"ucker embedding. Choose $b\in(\mathbb C^*)^n$ such that the numbers $b^m$, $m\in S$, are pairwise distinct; the operator $U_b$ is then diagonal with simple spectrum, its spectral projections are polynomials in $U_b$, and they preserve the invariant subspace $F_{j,\infty}$, which is therefore spanned by a subset $\{e_m:m\in I_j\}$ of the basis, with $\#I_j=r$. Operator-norm convergence of the orthogonal projections follows from continuity of the projection on the finite-dimensional Grassmannian. The incidence relation $A\subseteq B$ is closed in the product of Grassmannians, so nestedness passes to the limit, and coordinate subspaces are nested exactly when their index sets are. Summing the finitely many operator-norm limits gives the diagonal limit of $Q_t$, and taking traces gives $\sum_mq_m=\sum_j\dim F^j$.

For the point-jet application, define $(U_af)(z)=f(az)$; a function vanishes to order at least $j$ at $p$ if and only if $U_af$ vanishes to order at least $j$ at $a^{-1}p$, so $F_{a^{-1}p}^j=U_aF_p^j$. Taking $p=1$ and $a=(e^{tv_1},\dots,e^{tv_n})$ gives $F_{p_t}^j=D_tF_1^j$.
\end{proof}

For $p\in X$ and $1\leq j\leq N_k$, we write $P_{k,p}^j$ for the orthogonal projection of $H_k$ onto $F_{k,p}^j$, and we put $Q_{k,p}=\sum_{j=1}^{N_k}P_{k,p}^j$.

\begin{prop}\label{prop:weights}
There is a countable union $E$ of proper linear hyperplanes of $\mathbb R^n$, depending only on $K$ and $\varphi$, with the following property. For every $v\in\mathbb R^n\setminus E$ and every positive integer $k$ simultaneously, the direction $v$ is admissible for the chain $(F_{k,1}^j)_{1\leq j\leq N_k}$ in the sense of Lemma~\ref{lem:grassmann}, and hence, with $p_R=(e^{-Rv_1},\dots,e^{-Rv_n})$:
\begin{enumerate}
\item $Q_{k,p_R}=\sum_{j=1}^{N_k}P_{k,p_R}^j$ converges, as $R\to\infty$, in operator norm to a diagonal operator with $Q_{k,\infty}s_{k,m}=q_{k,m}s_{k,m}$, where the weights $q_{k,m}$ are integers in $[0,N_k]$ satisfying
\[
\sum_{m\in S_k}q_{k,m}=\sum_{j=1}^{N_k}\dim F_{k,1}^j,\qquad\text{and hence}\qquad\frac{1}{kd_k}\sum_{m\in S_k}q_{k,m}\longrightarrow n\quad(k\to\infty);
\]
moreover, for $1\leq j\leq N_k$, the limit index set of $D_RF_{k,1}^j$ supplied by Lemma~\ref{lem:grassmann}, where $D_R=U_{(e^{Rv_1},\dots,e^{Rv_n})}$ in the notation of that lemma, is $I_{k,j}=\{m\in S_k:q_{k,m}\geq j\}$, these sets are nested in $j$, and $\#\{m\in S_k:q_{k,m}<j\}=\operatorname{codim}F_{k,1}^j$;
\item for every fixed $k$, the finite rays at $p_R$ defined by \eqref{eq:finiteray} converge, as $R\to\infty$, locally uniformly in $(z,t)$ to the toric monomial ray
\[
u_{k,t}(z)=\frac1k\log\sum_{m\in S_k}|s_{k,m}(z)|^2e^{tq_{k,m}}.
\]
\end{enumerate}
\end{prop}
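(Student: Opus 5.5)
The plan is to apply Lemma~\ref{lem:grassmann} separately for each $k$ and then take the union of the exceptional sets. For fixed $k$, the Hilbert space $H_k$ has orthonormal basis $(s_{k,m})_{m\in S_k}$ by Lemma~\ref{lem:monomial}. The torus acts by $(U_af)(z)=f(az)$, so $U_as_{k,m}=a^ms_{k,m}$. This action is unitary only for $|a_i|=1$, but Lemma~\ref{lem:grassmann} uses only the diagonal form $U_ae_m=a^me_m$, so this is harmless.

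The chain $(F_{k,1}^j)_{1\le j\le N_k}$ is finite. Lemma~\ref{lem:grassmann} therefore provides a finite union $E_k$ of proper linear hyperplanes outside which $v$ is admissible. We set $E=\bigcup_kE_k$, a countable union that depends only on $K$ and $\varphi$ (through $H_k$ and the filtrations at $1$). For $v\notin E$, Lemma~\ref{lem:grassmann} applies with $e_m=s_{k,m}=c_mz^m$, $c_m=I_k(m/k)^{-1/2}\neq0$, and with $t=R$. Its last clause identifies $F^j_{k,p_R}=D_RF^j_{k,1}$. Hence $Q_{k,p_R}\to Q_{k,\infty}$ in operator norm, the limit is diagonal with integer weights $q_{k,m}\in[0,N_k]$, and $\sum_mq_{k,m}=\sum_j\dim F^j_{k,1}$. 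Dividing by $kd_k$ and using Theorem~\ref{thm:gateway}(1) together with Lemma~\ref{lem:layercake} at $p=1$ (where $A_k$ is independent of $p$) gives the convergence to $n$.

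Since $q_{k,m}=\#\{j:m\in I_{k,j}\}$ and the $I_{k,j}$ are nested decreasing, we get $m\in I_{k,j}$ exactly when $q_{k,m}\ge j$, for $j\le N_k$. The identity $\#I_{k,j}=\dim F^j_{k,1}$ then yields the codimension count $\#\{m:q_{k,m}<j\}=d_k-\dim F^j_{k,1}=\operatorname{codim}F^j_{k,1}$. This settles part (1).

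For part (2), I would express the finite ray basis-independently through the kernel identity recorded after \eqref{eq:finiteray}:
\[
e^{ku_{k,p,t}(z)}=B_k(z)+\sum_{j=1}^{N_k}(e^{tj}-e^{t(j-1)})K^j_{k,p}(z),
\]
where $K^j_{k,p}(z)=\sum_{m,m'}\langle P^j_{k,p}s_{k,m'},s_{k,m}\rangle s_{k,m}(z)\overline{s_{k,m'}(z)}$ is the diagonal kernel of $F^j_{k,p}$, written via the matrix entries of the projection. Operator-norm convergence $P^j_{k,p_R}\to P^j_{k,\infty}$ (the coordinate projection onto $I_{k,j}$) gives $K^j_{k,p_R}(z)\to\sum_{m\in I_{k,j}}|s_{k,m}(z)|^2$. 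This convergence is locally uniform in $z$, because the finitely many $s_{k,m}$ are bounded on compacts. Reassembling the telescoping sum with $q_{k,m}=\#\{j\le N_k:m\in I_{k,j}\}$ gives $\sum_m|s_{k,m}|^2e^{tq_{k,m}}$, and the convergence is uniform for $t$ in compacts since the coefficients are continuous in $t$.

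The main obstacle is passing to $\frac1k\log$. The argument of the logarithm is bounded below by $B_k(z)\ge|s_{k,0}(z)|^2>0$, since $0\in S_k$ and $s_{k,0}$ is a nonzero constant. Hence $\log$ is uniformly continuous on the relevant range over compact sets, and locally uniform convergence of the sums transfers to locally uniform convergence of $u_{k,p_R,t}\to u_{k,t}$ in $(z,t)$.
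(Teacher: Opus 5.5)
Your proposal is correct and follows the paper's proof essentially step for step. You apply Lemma~\ref{lem:grassmann} at each $k$, take the union of the exceptional hyperplanes, and use the gateway trace limit and nestedness for part (1). For part (2), you pass to the limit projection by projection in the telescoping kernel identity, whereas the paper writes the same quantity as $\langle e^{tQ_{k,p}}\kappa_z,\kappa_z\rangle$ and uses operator-norm convergence of $Q_{k,p_R}$; these agree, since for nested projections $e^{tQ}=I+\sum_j(e^{tj}-e^{t(j-1)})P^j$. Your global lower bound $B_k\geq|s_{k,0}|^2>0$ is a slightly cleaner substitute for the paper's positivity of $B_k$ on compact sets.
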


\begin{proof}
For each $k$, Lemma~\ref{lem:grassmann} confines the inadmissible directions for the chain $(F_{k,1}^j)_j$ to a finite union of proper hyperplanes; let $E$ be the union over all $k$, a countable union of proper hyperplanes depending only on the chains, hence only on $K$ and $\varphi$. Fix $v\notin E$. Assertion (1) is Lemma~\ref{lem:grassmann} applied at each $k$ (with $H=H_k$, $S=S_k$, $e_m=s_{k,m}$, $N=N_k$, and $F^j=F_{k,1}^j$), followed by \eqref{eq:jetcount} and Theorem~\ref{thm:gateway}(1) for the normalized trace limit; the identification $I_{k,j}=\{m:q_{k,m}\geq j\}$ and the codimension count follow from the nestedness of the index sets together with $q_{k,m}=\#\{j:m\in I_{k,j}\}$ and $\#I_{k,j}=\dim F_{k,1}^j$. For (2), the projections $P_{k,p}^j$ onto the nested subspaces commute, and an orthonormal basis adapted to the filtration diagonalizes them all; on a basis vector of exact vanishing order $d$ at $p$, the operator $Q_{k,p}$ acts by $\min(d,N_k)$. If $\kappa_z\in H_k$ denotes the evaluation vector at $z$, characterized by $f(z)=\langle f,\kappa_z\rangle$, then
\[
\sum_a|s_{k,a}(z)|^2e^{t\min(\ord_p(s_{k,a}),N_k)}=\bigl\langle e^{tQ_{k,p}}\kappa_z,\kappa_z\bigr\rangle.
\]
Operator-norm convergence $Q_{k,p_R}\to Q_{k,\infty}$ implies operator-norm convergence of $e^{tQ_{k,p_R}}$, uniformly for $t$ in compact intervals; since $Q_{k,\infty}$ is diagonal in the monomial basis, the right side converges to $\sum_m|s_{k,m}(z)|^2e^{tq_{k,m}}$. The evaluation vector depends continuously on $z$ with locally bounded norm, so the displayed inner products converge to $\sum_m|s_{k,m}(z)|^2e^{tq_{k,m}}$ locally uniformly in $(z,t)$. Since $Q_{k,p}$ is positive semidefinite, all the quantities involved dominate $\langle\kappa_z,\kappa_z\rangle=B_k(z)$, and $B_k$ is continuous and positive, hence bounded below by a positive constant on every compact subset of $X$; as the logarithm is Lipschitz on $[\varepsilon,\infty)$ for every $\varepsilon>0$, taking $\frac1k\log$ preserves local uniform convergence.
\end{proof}

\subsection{Toric limit rays}\label{subsec:limitrays}

\begin{lem}\label{lem:finitelevel}
Let $v\in\mathbb R^n\setminus E$ with $E$ as in Proposition~\ref{prop:weights}, and let $k$ be a positive integer such that $P_k=\frac1k\conv(S_k)$ is full-dimensional with $0\in\intr(P_k)$. Then, for the associated toric monomial ray $u_{k,t}$ of Proposition~\ref{prop:weights} and $t\geq0$, the integrals $\int_Xe^{-u_{k,t}}\,d\nu$ are finite and positive, and
\[
L_k(t)=-\log\frac{\int_Xe^{-u_{k,t}}\,d\nu}{\int_Xe^{-u_{k,0}}\,d\nu}
\]
is convex and satisfies $L_k(t)\leq nt$.
\end{lem}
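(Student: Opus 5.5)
Because the weights $q_{k,m}$ are integers between $0$ and $N_k$, the monomial ray lies within a bounded distance of $h_{P_k}$, so Lemma~\ref{lem:basis} gives finiteness and a one-dimensional weighted space. The three claims then come from three sources: Berndtsson convexity (Lemma~\ref{lem:berndtsson}) for convexity, and the point-ray Schwarz bound of Lemma~\ref{lem:ballbound} transported to the monomial ray for $L_k(t)\leq nt$.

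\emph{Finiteness and positivity.} Since $|s_{k,m}(z)|^2=e^{\langle m,x\rangle}/I_k(m/k)$ and $0\leq q_{k,m}\leq N_k$, for each fixed $t$ we have
\[
\Bigl|u_{k,t}(x)-\max_{m\in S_k}\Bigl\langle \tfrac mk,x\Bigr\rangle\Bigr|\leq C_{k,t}.
\]
The maximum equals $h_{P_k}(x)$. Since $0\in\intr(P_k)$, Lemma~\ref{lem:basis} applied to $P=P_k$ and $w=u_{k,t}$ makes $\int_Xe^{-u_{k,t}}\,d\nu$ finite and positive.

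\emph{Convexity.} The ray is $\Psi_k(z,\tau)=\frac1k\log\sum_m|s_{k,m}(z)\tau^{q_{k,m}}|^2$. This is plurisubharmonic on $X\times\{|\tau|>1\}$, rotation invariant in $\tau$, with slices $u_{k,0}\leq u_{k,t}\leq u_{k,0}+(n+1)t$ and $u_{k,t}\to u_{k,0}$ as $t\downarrow 0$. Take $w_0=u_{k,0}$. Its weighted holomorphic space is $H(w_0)$ with $|w_0-h_{P_k}|$ bounded. Here is the one subtlety: $\intr(P_k)$ may contain lattice points other than $0$, so I would pass from the weight $w_0$ to the weight $kw_0$, whose growth is $kh_{P_k}=h_{kP_k}$. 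Then $\intr(kP_k)\subseteq\intr(kK)$ meets $\mathbb Z^n$ in a subset of $S_k$, so this is not directly $\mathbb C$ either.

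I therefore expect the one-dimensionality hypothesis to be the main obstacle. The plan is to read $u_{k,t}$ itself as the weight. Since $kP_k\subseteq kK$, we get $P_k\subseteq K$, and $\intr(P_k)\cap\mathbb Z^n\subseteq\intr(K)\cap\mathbb Z^n=\{0\}$. Thus Lemma~\ref{lem:basis} with $P=P_k$ and $w=u_{k,t}$ gives that every slice space is exactly $\mathbb C$. Lemma~\ref{lem:berndtsson}, with $V_0=\int e^{-u_{k,0}}\,d\nu$ and $c=n+1$, then yields that $L_k$ is convex with $L_k(0)=0$.

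\emph{Upper bound.} For $L_k(t)\leq nt$ I would mimic Lemma~\ref{lem:schwarz}, using a fixed-$k$ version of the Schwarz bound. For the finite rays $u_{k,p_R,t}$ at the point $p_R$, Lemma~\ref{lem:ballbound} gives $u_{k,p_R,t}\leq M+\frac1k\log(C^2d_k)$ on a coordinate ball of radius $\sim re^{-t/2}$ around $p_R$. The difficulty is that $p_R$ escapes to infinity as $R\to\infty$, so a uniform ball bound is not available directly. Instead I would use torus invariance: $u_{k,t}$ is torus invariant, and the convergence in Proposition~\ref{prop:weights}(2) is locally uniform.

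The workaround is to translate back. Pulling the ray at $p_R$ back by $D_R$ gives the ray for the filtration at $1$ computed in the pulled-back basis. Then $\int e^{-u_{k,p_R,t}}\,d\nu$ over the $d\nu$-translate of the ball at $p_R$ has $d\nu$-volume $\asymp e^{-nt}$, because $d\nu$ is Haar and translation invariant. Combined with the lower bound $\int e^{-u_{k,p_R,t}}\geq c\,e^{-nt}e^{-M_R}$, this yields, after normalising by the $t=0$ integral, $L_{k,R}(t)\leq nt+D$ with $D$ independent of $t$.

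To get this uniformly in $R$, I would note that the normalised partition function $\int e^{-u_{k,p,t}}/\int e^{-u_{k,p,0}}$ at $p_R$ equals the one at $1$ computed with translated weights. Since $u_{k,p_R,t}\to u_{k,t}$ locally uniformly and the integrands are dominated by $e^{-u_{k,0}}$, this normalised function tends to the one defining $L_k$. Hence $L_k(t)\leq nt+D$. Finally, convexity with $L_k(0)=0$ removes $D$ exactly as at the end of Lemma~\ref{lem:schwarz}. The delicate point is obtaining the constant $D$ independent of $R$; I would attempt it via the $\varphi$-bound $M_r$ taken near $p_R$, controlled by the explicit monomial formula for $B_k$.
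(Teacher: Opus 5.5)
Your finiteness and convexity steps are correct and match the paper. The detour through the weight $kw_0$ is unnecessary: $P_k\subseteq K$ already gives $\intr(P_k)\cap\mathbb Z^n=\{0\}$, so Lemma~\ref{lem:basis} with the weight $u_{k,t}$ itself makes every slice space $\mathbb C$.

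\textbf{The gap is in the upper bound.} You obtain $L_{k,R}(t)\leq nt+D_R$ for the finite ray at $p_R$. You then try to make $D_R$ independent of $R$, pass to the limit, and only afterwards remove the constant using convexity of $L_k$. That uniformity is not available from the ball bound. In logarithmic coordinates $p_R$ sits at $x=-2Rv$, and $v\neq0$ because $0\in E$. So the quantity $M_R=\sup\varphi$ near $p_R$ is roughly $2Rh_K(-v)$, which tends to $+\infty$ since $0\in\intr(K)$. Put differently, the coordinate ball about $p_R$ carries $e^{-u_{k,0}}\,d\nu$-mass that is exponentially small in $R$, while the normalizing integral $\int_Xe^{-u_{k,0}}\,d\nu$ is fixed. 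Hence $D_R\to\infty$. Your ``translate back'' idea does not rescue this: the dilations $D_R$ are not unitary on $H_k$, and the weight $e^{-k\varphi}$ is not invariant under them. Controlling $M_R$ through the formula for $B_k$ cannot help either.

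\textbf{The missing step is to remove the constant before letting $R\to\infty$.} The finite ray $w_t=u_{k,p_R,t}$ has the form $\frac1k\log\sum_a|s_{k,a}|^2e^{tq_a}$, with an orthonormal basis and integers $0\leq q_a\leq N_k$. Therefore $w_0=\frac1k\log B_k=u_{k,0}$ and $0\leq w_t-u_{k,0}\leq(n+1)t$. Its total-space function $\frac1k\log\sum_a|s_{k,a}(z)\tau^{q_a}|^2$ is plurisubharmonic and rotation-invariant. Your own convexity argument (Lemma~\ref{lem:basis} and Lemma~\ref{lem:berndtsson} with $w_0=u_{k,0}$) thus shows that $L_{k,R}$ is convex with $L_{k,R}(0)=0$. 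Consequently $L_{k,R}(t)/t$ is nondecreasing, and $L_{k,R}(t)\leq nt+D_R$ upgrades to $L_{k,R}(t)\leq nt$ at each fixed $R$, whatever the size of $D_R$.

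Only now let $R\to\infty$. By Proposition~\ref{prop:weights}(2) the finite rays converge locally uniformly to $u_{k,t}$, and $e^{-w_t}\leq e^{-u_{k,0}}$ is an integrable majorant. Dominated convergence then gives $\int_Xe^{-u_{k,p_R,t}}\,d\nu\to\int_Xe^{-u_{k,t}}\,d\nu$, so the exact bound $L_k(t)\leq nt$ survives in the limit. This is how the paper argues.
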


\begin{proof}
The initial weight $u_{k,0}=\frac1k\log B_k$ differs by a bounded function from $h_{P_k}$: indeed $B_k=\sum_me^{\langle m,x\rangle}/I_k(m/k)$ is a finite positive combination of the $e^{\langle m,x\rangle}$ with $m/k\in P_k$, and $\max_{m\in S_k}\langle m/k,x\rangle=h_{P_k}(x)$, so $\frac1k\log B_k-h_{P_k}$ is bounded by $\frac1k$ times the logarithms of $d_k$ and of the finitely many normalizing constants. Moreover $P_k\subseteq K$, so $\intr(P_k)\cap\mathbb Z^n\subseteq\intr(K)\cap\mathbb Z^n=\{0\}$, and Lemma~\ref{lem:basis}, applied with the body $P_k$ and the weight $u_{k,0}$, shows that the weighted holomorphic space of the weight $u_{k,0}$ consists exactly of the constants and that $\int_Xe^{-u_{k,0}}\,d\nu\in(0,\infty)$; the same then holds for every ray of the form
\[
w_t=\frac1k\log\sum_a|s_{k,a}|^2e^{tq_a},
\]
where $(s_{k,a})_a$ is any orthonormal basis of $H_k$ and $0\leq q_a\leq N_k$ are integers, since $w_0=u_{k,0}$ (every orthonormal basis satisfies $\sum_a|s_{k,a}|^2=B_k$) and $0\leq w_t-u_{k,0}\leq(n+1)t$ is bounded for fixed $t$; this covers both the toric ray $u_{k,t}$, obtained from the monomial basis with the weights $q_{k,m}$, and the finite rays \eqref{eq:finiteray} based at the points $p_R$. In particular all the integrals $\int_Xe^{-w_t}\,d\nu$ are finite and positive. Each total-space function $\frac1k\log\sum_a|s_{k,a}(z)\tau^{q_a}|^2$ is plurisubharmonic on $X\times\mathbb C^*$ and rotation-invariant in $\tau$, with radial slices $w_t$; Lemma~\ref{lem:berndtsson}, applied with $w_0=u_{k,0}$, $V_0=\int_Xe^{-u_{k,0}}\,d\nu$, and $c=n+1$, shows that every relative partition function
\[
L^w(t)=-\log\frac{\int_Xe^{-w_t}\,d\nu}{\int_Xe^{-u_{k,0}}\,d\nu}
\]
is finite and convex on $[0,\infty)$ with $L^w(0)=0$; in particular so is $L_k$.

For the upper bound, we use the finite rays at the radially escaping points. Fix $R$ and let $w_t$ be the ray \eqref{eq:finiteray} based at $p_R$. Lemma~\ref{lem:ballbound}, applied at the base point $p_R$ with a coordinate radius $r$, bounds $w_t$ by $M+\tfrac1k\log(C^2d_k)$ on the coordinate ball of radius $re^{-t/2}$ about $p_R$, whose $d\nu$-volume is a positive constant times $e^{-nt}$; here $M$ and $C$ depend on $p_R$ and $r$ but not on $t$. Hence $\int_Xe^{-w_t}\,d\nu\geq c'e^{-nt}$ with $c'$ independent of $t$, so $L^w(t)\leq nt+D$ for a constant $D$ independent of $t$. Since $L^w$ is convex with $L^w(0)=0$, the quotient $L^w(t)/t$ is nondecreasing, so for $0<t<T$ we get $L^w(t)/t\leq L^w(T)/T\leq n+D/T$, and $T\to\infty$ gives $L^w(t)\leq nt$ for every finite ray. By Proposition~\ref{prop:weights}(2), $u_{k,t}$ is the locally uniform limit, as $R\to\infty$, of these rays; since all rays dominate $u_{k,0}$ and $e^{-u_{k,0}}$ is integrable, dominated convergence passes the exponential integrals to the limit, and the bound $L_k(t)\leq nt$ survives.
\end{proof}

\begin{thm}\label{thm:limitray}
Let $v\in\mathbb R^n\setminus E$ with $E$ as in Proposition~\ref{prop:weights}. Every sequence of positive integers tending to infinity has a subsequence along which the toric monomial rays $u_{k,t}$ converge locally uniformly on $X\times[0,\infty)$ to a torus-invariant ray $\psi_t$ with the following properties: $\psi_0=\varphi$; $0\leq\psi_t-\varphi\leq(n+1)t$; the function $(x,t)\mapsto\psi_t(x)$ is convex on $\mathbb R^n\times[0,\infty)$; and the normalized partition function
\[
L(t)=-\log\Bigl(\frac{1}{\vol(K)}\int_Xe^{-\psi_t}\,d\nu\Bigr)
\]
satisfies $L(t)=nt$ for every $t\geq0$.
\end{thm}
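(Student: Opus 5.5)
The plan is to obtain the limit ray by compactness of convex functions and then squeeze its partition function between the bound of Lemma~\ref{lem:finitelevel} and a lower bound coming from the trace limit in Proposition~\ref{prop:weights}(1).

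\textbf{Step 1: the functions $u_{k,t}$.} Each $u_{k,t}$ is torus-invariant, so it is a function of $(x,t)$. Write $x=\log|z|^2$ and $t=\log|\tau|^2$. Then
\[
u_{k,t}(x)=\tfrac1k\log\sum_m c_{k,m}\,e^{\langle m,x\rangle+tq_{k,m}}
\]
with positive constants $c_{k,m}$. This is a log-sum-exp of affine functions of $(x,t)$, hence jointly convex on $\mathbb R^n\times\mathbb R$. Since $0\leq q_{k,m}\leq N_k$, we have $u_{k,0}\leq u_{k,t}\leq u_{k,0}+(n+1)t$. Moreover $u_{k,0}=\frac1k\log B_k\to\varphi$ locally uniformly by Lemma~\ref{lem:bergmanconv}. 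So the family is locally uniformly bounded. Locally bounded convex functions are locally uniformly Lipschitz, so Arzel\`a--Ascoli together with a diagonal argument gives a subsequence converging locally uniformly on $\mathbb R^n\times[0,\infty)$ to a convex limit $\psi_t$. The limit inherits $\psi_0=\varphi$ and $0\leq\psi_t-\varphi\leq(n+1)t$.

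\textbf{Step 2: the upper bound $L(t)\leq nt$.} Lemma~\ref{lem:finitelevel} gives $L_k(t)\leq nt$ for large $k$; its hypothesis holds because $\frac1k\conv(S_k)\to K$ in Hausdorff distance and $0\in\intr(K)$. Next, $\int e^{-u_{k,0}}\,d\nu\to\vol(K)$. To see this, use $u_{k,0}\geq h_{P_k}-o(1)$ with $P_k\supseteq\delta B$ uniformly, which gives a uniform integrable majorant $e^{-\delta|x|+C}$; then apply dominated convergence together with Lemma~\ref{lem:potential}. The same majorant bounds $e^{-u_{k,t}}$, so $\int e^{-u_{k,t}}\,d\nu\to\int e^{-\psi_t}\,d\nu$. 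Hence $L(t)=\lim_k L_k(t)\leq nt$. Convexity of $L$ follows in the same way from convexity of $L_k$. Alternatively, Pr\'ekopa's theorem applied to the jointly convex $\psi$ shows that $t\mapsto-\log\int e^{-\psi_t}$ is convex.

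\textbf{Step 3: the lower bound on the slope.} Set $g_k=\partial_t u_{k,t}|_{0^+}=\sum_m (q_{k,m}/k)\,|s_{k,m}|^2/B_k$. Integrating against $\mu_k$ gives
\[
\int g_k\,d\mu_k=\frac{1}{kd_k}\sum_m q_{k,m}\;\longrightarrow\; n
\]
by Proposition~\ref{prop:weights}(1). Let $g=\lim_{t\downarrow0}(\psi_t-\varphi)/t$, which exists by convexity in $t$ and takes values in $[0,n+1]$. Lemma~\ref{lem:secant}, applied pointwise with $\psi$ as the dominating function (along the subsequence, with equality of limits), gives $\limsup_k g_k\leq g$. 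Lemma~\ref{lem:fatou} then gives $n=\lim_k\int g_k\,d\mu_k\leq\int g\,d\mu$. By Lemma~\ref{lem:derivative} with $c=n+1$, the right side equals $L'_+(0)$.

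\textbf{Step 4: conclusion.} Lemma~\ref{lem:pinch} with $\alpha=n$ now yields $L(t)=nt$ for all $t\geq0$.

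The main obstacle is Step 2, the uniform integrability needed to pass $\int e^{-u_{k,t}}$ to the limit, because $\varphi$ is only known to be within a bounded distance of $h_K$. I would first try the uniform bound $u_{k,0}\geq h_{P_k}-C/k\cdot\log(\dots)$. The normalizing constants $I_k(m/k)^{1/k}$ need control: Proposition~\ref{prop:laplace} handles interior points, and the crude estimate $I_k(u)\leq e^{k\varphi^*(u)}\cdot\mathrm{poly}(k)$ should handle the rest. If this proves delicate, a fallback is to use only points $m$ with $m/k$ in a fixed interior polytope containing $\delta B$, since dropping terms from the sum only lowers $u_{k,0}$.
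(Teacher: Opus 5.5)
Your proposal is correct and follows the paper's proof essentially step for step. The shared steps are: Arzel\`a--Ascoli compactness of the jointly convex log-sum-exp rays; the bound $L_k(t)\leq nt$ from Lemma~\ref{lem:finitelevel}, passed to the limit via a uniform exponential majorant; the slope lower bound from Proposition~\ref{prop:weights}(1) together with Lemmas~\ref{lem:secant}, \ref{lem:fatou} and~\ref{lem:derivative}; and the pinch of Lemma~\ref{lem:pinch}.

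The only divergence is how you get the majorant. The paper obtains $u_{k,0}\geq\delta|x|-C_1$ directly from the convexity of $u_{k,0}$ and its locally uniform convergence to the coercive $\varphi$, so it never touches the normalizing constants $I_k(m/k)$. Your fallback also works: keep only the monomials with $m/k$ in a fixed ball inside $\intr(K)$, and control $\frac1k\log I_k(m/k)$ uniformly there by Proposition~\ref{prop:laplace}. The resulting lower bound is $\delta|x|-C$ with an $O(1)$ constant, not $h_{P_k}-o(1)$ as you first wrote, because $\frac1k\log I_k(m/k)\approx\varphi^*(m/k)$ is bounded but not small.
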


\begin{proof}
In the logarithmic coordinates $x$, each $u_{k,t}$ is torus-invariant and convex in $(x,t)$: it is $\frac1k$ times a log-sum-exp of the affine functions $\langle m,x\rangle+tq_{k,m}$ plus constants. Its $x$-gradients lie in $\conv(S_k/k)\subseteq K$ and its $t$-derivative lies in $[0,n+1]$, so the family is equi-Lipschitz on compact subsets of $\mathbb R^n\times[0,\infty)$, and $u_{k,0}(0)\to\varphi(0)$ by Lemma~\ref{lem:bergmanconv}. By the Arzel\`a--Ascoli theorem and a diagonal extraction, every sequence of $k$'s has a subsequence along which $u_{k,t}$ converges locally uniformly to a finite convex function $\psi_t(x)$, jointly in $(x,t)$; local uniform limits of torus-invariant plurisubharmonic functions are torus-invariant plurisubharmonic, and passing to the limit in $u_{k,0}\leq u_{k,t}\leq u_{k,0}+(n+1)t$ together with $u_{k,0}\to\varphi$ gives $\psi_0=\varphi$ and the two-sided bounds.

We next check convergence of the exponential integrals along the subsequence. Since $0\in\intr(K)$ and $\varphi-h_K$ is bounded, there are $\rho>0$ and $C$ with $\varphi(x)\geq\rho|x|-C$. Choose $R_0$ with $\varphi(R_0\theta)-\varphi(0)\geq\rho R_0/2$ for every unit vector $\theta$. Local uniform convergence $u_{k,0}\to\varphi$ gives, for all large $k$ in the subsequence, $u_{k,0}(R_0\theta)-u_{k,0}(0)\geq\rho R_0/3$ uniformly in $\theta$; for $r\geq R_0$, convexity makes the secant slopes of $r\mapsto u_{k,0}(r\theta)$ nondecreasing, so $u_{k,0}(r\theta)\geq u_{k,0}(0)+\frac{\rho}{3}(r-R_0)$. After enlarging a constant to cover $|x|\leq R_0$, there are $\delta>0$, $C_1$, and $k_0$ with
\[
u_{k,t}(x)\geq u_{k,0}(x)\geq\delta|x|-C_1\qquad(k\geq k_0\text{ in the subsequence}),
\]
an integrable exponential bound independent of $k$ and of $t$ in compact intervals. Fix a compact interval of $t$'s. For $\sigma>0$, split the integrals over $\{|x|\leq\sigma\}$ and $\{|x|>\sigma\}$: the tail contributions are at most $\int_{|x|>\sigma}e^{-\delta|x|+C_1}\,dx$, which tends to $0$ as $\sigma\to\infty$ uniformly in $k$ and $t$, while on the compact set $\{|x|\leq\sigma\}$ the uniform convergence $u_{k,t}\to\psi_t$ makes the integrals converge uniformly in $t$. Letting $\sigma\to\infty$ gives $\int_Xe^{-u_{k,t}}\,d\nu\to\int_Xe^{-\psi_t}\,d\nu$, uniformly for $t$ in the interval; the limits are finite and positive.

For all large $k$, the scaled polytope $P_k=\frac1k\conv(S_k)$ is full-dimensional with $0\in\intr(P_k)$: for each $i$, both $\pm e_i\in\intr(kK)$ once $k$ is large (as $0\in\intr(K)$), so $\pm e_i/k\in P_k$ and $P_k$ contains a cross-polytope neighborhood of $0$. Lemma~\ref{lem:finitelevel} therefore applies and gives, for every such $k$,
\[
-\log\frac{\int_Xe^{-u_{k,t}}\,d\nu}{\int_Xe^{-u_{k,0}}\,d\nu}\leq nt,
\]
with the left side convex in $t$. Passing to the limit along the subsequence using the convergence of integrals proved above and $\int_Xe^{-\varphi}\,d\nu=\vol(K)$ (Lemma~\ref{lem:potential}) yields that $L$ is convex with $L(t)\leq nt$.

For the lower bound on the initial slope, define the finite initial slopes
\[
g_k(z)=\frac{\sum_{m\in S_k}(q_{k,m}/k)|s_{k,m}(z)|^2}{B_k(z)}\in[0,n+1].
\]
Orthonormality gives
\[
\int_Xg_k\,d\mu_k=\frac{1}{kd_k}\sum_{m\in S_k}q_{k,m}\longrightarrow n
\]
by Proposition~\ref{prop:weights}(1). For fixed $z$, the functions $t\mapsto u_{k,t}(z)$ are convex with initial slopes $g_k(z)$, $u_{k,0}(z)\to\varphi(z)$, and $u_{k,t}(z)\to\psi_t(z)$; Lemma~\ref{lem:secant} gives $\limsup_kg_k(z)\leq g(z):=\lim_{t\downarrow0}(\psi_t(z)-\varphi(z))/t$, which exists by convexity and lies in $[0,n+1]$. Lemma~\ref{lem:fatou} and Lemma~\ref{lem:derivative} (applied with the measure $\mu$ and $c=n+1$) give
\[
n=\lim_k\int_Xg_k\,d\mu_k\leq\int_Xg\,d\mu=L'_+(0).
\]
Convexity, $L(0)=0$, $L'_+(0)\geq n$, and $L(t)\leq nt$ now pinch $L(t)=nt$ for all $t\geq0$ by Lemma~\ref{lem:pinch}.
\end{proof}

\subsection{Pr\'ekopa equality: affine rays are translation rays}\label{subsec:prekopa}

\begin{lem}\label{lem:affineconsistency}
Let $V\colon\mathbb R^n\times[0,\infty)\to\mathbb R$ be a finite convex function such that $Z(t)=\int_{\mathbb R^n}e^{-V(x,t)}\,dx$ is finite and positive for every $t\geq0$ and $-\log Z(t)=-\log Z(0)+bt$ for a constant $b$. Assume that for every $t\geq0$ there exists $c_t\in\mathbb R^n$ with
\[
\frac{e^{-V(x+c_t,t)}}{Z(t)}=\frac{e^{-V(x,0)}}{Z(0)}\qquad\text{for almost every }x.
\]
Then there is a single vector $c\in\mathbb R^n$ with $c_t=tc$ and $V(x,t)=V(x-tc,0)+bt$ for every $x$ and $t$.
\end{lem}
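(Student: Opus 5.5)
The plan is to rewrite the hypothesis as an exact identity between the convex slices, and then use joint convexity of $V$ to show that $t\mapsto c_t$ is affine with $c_0=0$.

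\emph{Step 1: the slice identity.} Put $f_0=V(\cdot,0)$ and $W(x,t)=V(x,t)-bt$. Then $W$ is finite and convex on $\mathbb R^n\times[0,\infty)$. Taking $-\log$ of the hypothesis and using $\log Z(t)-\log Z(0)=-bt$ gives $W(x+c_t,t)=f_0(x)$ for almost every $x$. Both sides are finite convex functions of $x$, hence continuous, so the identity holds for every $x$:
\[
W(x,t)=f_0(x-c_t)\qquad(x\in\mathbb R^n,\ t\geq0).
\]
This is already the claimed form $V(x,t)=V(x-c_t,0)+bt$, except that $c_t$ is not yet known to be linear in $t$.

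\emph{Step 2: a rigidity observation.} I would use the following fact: if $d\in\mathbb R^n$ satisfies $f_0(a+d)\leq f_0(a)$ for every $a$, then $d=0$. Since $e^{-f_0}$ is integrable, translation invariance of Lebesgue measure gives
\[
\int e^{-f_0(a+d)}\,da=\int e^{-f_0(a)}\,da,
\]
while pointwise $e^{-f_0(a+d)}\geq e^{-f_0(a)}$. Hence the two functions agree almost everywhere, and by continuity everywhere. So $f_0$ is $d$-periodic. A $d$-periodic function with $d\neq0$ has $e^{-f_0}$ non-integrable, since $e^{-f_0}>0$ and it repeats over infinitely many disjoint translates of a fixed ball. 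This contradicts $Z(0)<\infty$, so $d=0$. Applied at $t=0$, Step 1 gives $f_0(x)=f_0(x-c_0)$ for all $x$, so this fact yields $c_0=0$. It also shows that $c_t$ is uniquely determined by the slice at time $t$.

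\emph{Step 3: $c$ is affine.} Fix $s,t\geq0$ and $\lambda\in[0,1]$, and put $r=\lambda s+(1-\lambda)t$. Evaluate joint convexity of $W$ at the points $(a+c_s,s)$ and $(a+c_t,t)$, and use Step 1 on each side. This gives
\[
f_0\bigl(a+\lambda c_s+(1-\lambda)c_t-c_r\bigr)\leq\lambda f_0(a)+(1-\lambda)f_0(a)=f_0(a)\quad\text{for every }a.
\]
By Step 2, $c_r=\lambda c_s+(1-\lambda)c_t$. Taking $s=0$ and $t=1$ (using $c_0=0$) gives $c_r=rc_1$ for $r\in[0,1]$. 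Taking $s=0$ and $t$ arbitrary gives $c_{\lambda t}=\lambda c_t$ for every $t$, which extends $c_t=tc_1$ to all $t\geq0$. Setting $c=c_1$ completes the proof.

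The only delicate point is Step 2, namely turning the one-sided inequality into $d=0$. This rests on equality of the integrals of $e^{-f_0}$ and its translate, the passage from almost-everywhere to everywhere via continuity of finite convex functions, and the observation that a nonzero period is incompatible with integrability. Everything else is bookkeeping.
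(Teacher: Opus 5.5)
Your proof is correct. Its skeleton matches the paper's: the slice identity, then $c_0=0$ and uniqueness of $c_t$, then joint convexity yielding $f_0(a+d)\leq f_0(a)$ with $d=\lambda c_s+(1-\lambda)c_t-c_r$. The difference is in how you force $d=0$.

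\emph{The paper's route.} It treats the two occurrences separately.
\begin{itemize}
\item For uniqueness, it argues that a finite convex function invariant under a nonzero translation is constant on lines parallel to that translation. Fubini then shows $e^{-W}$ is not integrable.
\item For the affine step, it iterates to $W(x+kd)\leq W(x)$. It then contradicts coercivity of $W$, which it proves by a half-cylinder argument using convexity and integrability.
\end{itemize}

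\emph{Your route.} You prove a single rigidity fact that covers both cases. Translation invariance of Lebesgue measure makes the finite integrals of $e^{-f_0(\cdot+d)}$ and $e^{-f_0}$ equal. Together with the pointwise one-sided inequality, this forces equality almost everywhere, hence everywhere by continuity. So $f_0$ is $d$-periodic, and a positive continuous $d$-periodic function with $d\neq0$ is not integrable, because of the disjoint translates $B+kd$ of a small ball.

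\emph{Comparison.} Your argument is shorter. It uses only continuity of $f_0$ and integrability of $e^{-f_0}$, not convexity of the slice or coercivity. The paper's route establishes the stronger intermediate fact that $W$ is coercive, which the lemma does not need.
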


\begin{proof}
Taking logarithms and using the affine formula for $-\log Z$ gives $V(x+c_t,t)=V(x,0)+bt$ almost everywhere; both sides are continuous in $x$, so the identity holds everywhere, and with $W=V(\cdot,0)$ we obtain $V(x,t)=W(x-c_t)+bt$. The vector $c_t$ is unique: if $c_t$ and $d_t$ both work, then $W$ is invariant under translation by $h=c_t-d_t$; a finite convex function invariant under a nonzero translation is constant on lines parallel to $h$, making $e^{-W}$ non-integrable by Fubini, a contradiction; hence $h=0$, and in particular $c_0=0$.

Fix $0\leq s<t$ and $\lambda\in[0,1]$ and put $r=(1-\lambda)s+\lambda t$. Convexity of $V$ applied to the points $(x+c_s,s)$ and $(x+c_t,t)$ gives
\[
V\bigl(x+(1-\lambda)c_s+\lambda c_t,\,r\bigr)\leq(1-\lambda)V(x+c_s,s)+\lambda V(x+c_t,t)=W(x)+br,
\]
while the slice representation makes the left side $W\bigl(x+(1-\lambda)c_s+\lambda c_t-c_r\bigr)+br$. Hence $W(x+d)\leq W(x)$ for every $x$, where $d=(1-\lambda)c_s+\lambda c_t-c_r$; iterating, $W(x+kd)\leq W(x)$ for every positive integer $k$. If $d\neq0$, coercivity of $W$ would be contradicted. Coercivity holds because $e^{-W}$ is integrable and $W$ is finite convex: if $W\leq M$ on a sequence $x_j$ with $|x_j|\to\infty$ and, after passing to a subsequence, $x_j/|x_j|\to\theta$, then $W\leq\max(W(0),M)$ on the segments $[0,x_j]$, hence on the ray $\mathbb R_{\geq0}\theta$ by continuity of the finite convex function $W$; taking a ball $B(0,\rho)$ on which $W\leq M'$, convexity bounds $W$ by $\max(M',W(0),M)$ on $\conv\bigl(B(0,\rho)\cup\mathbb R_{\geq0}\theta\bigr)$, which contains a half-cylinder of radius $\rho/2$ and infinite volume, making $e^{-W}$ non-integrable. Thus $c_r=(1-\lambda)c_s+\lambda c_t$: the map $t\mapsto c_t$ is affine with $c_0=0$, so $c_t=tc_1$ for all $t\geq0$.
\end{proof}

\begin{prop}\label{prop:prekopa}
Let $V\colon\mathbb R^n\times[0,\infty)\to\mathbb R$ be a finite convex function such that $Z(t)=\int_{\mathbb R^n}e^{-V(x,t)}\,dx$ is finite and positive for every $t\geq0$ and $-\log Z(t)=-\log Z(0)+bt$ for a constant $b$. Then there is a vector $c\in\mathbb R^n$ such that
\[
V(x,t)=V(x-tc,0)+bt\qquad\text{for every }x\in\mathbb R^n,\ t\geq0.
\]
\end{prop}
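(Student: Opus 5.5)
The plan is to reduce the statement to Lemma~\ref{lem:affineconsistency}. All that remains is to produce, for every $t\geq0$, a vector $c_t$ with $e^{-V(\cdot+c_t,t)}/Z(t)=e^{-V(\cdot,0)}/Z(0)$ almost everywhere. This should come from the equality case of the Pr\'ekopa--Leindler inequality, as characterized by Dubuc \cite[Th\'eor\`eme~12]{Dub77}.

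Fix $0<t$ and set $f_0=e^{-V(\cdot,0)}$ and $f_t=e^{-V(\cdot,t)}$. For $\lambda\in(0,1)$ put $h=e^{-V(\cdot,\lambda t)}$. Joint convexity of $V$ gives, for all $x,y$,
\[
h\bigl((1-\lambda)x+\lambda y\bigr)\geq f_0(x)^{1-\lambda}f_t(y)^{\lambda}.
\]
This is exactly the Pr\'ekopa--Leindler hypothesis, so $Z(\lambda t)\geq Z(0)^{1-\lambda}Z(t)^\lambda$. The assumption that $-\log Z$ is affine turns this into an equality. We also need the equality to be attained by the smallest admissible $h$, namely the sup-convolution $h^*(z)=\sup_{(1-\lambda)x+\lambda y=z}f_0(x)^{1-\lambda}f_t(y)^\lambda$. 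This holds because $h\geq h^*$ pointwise, $\int h^*\geq Z(0)^{1-\lambda}Z(t)^\lambda$ by Pr\'ekopa--Leindler, and $\int h$ equals that bound. So equality holds in Pr\'ekopa--Leindler for the triple $(f_0,f_t,h^*)$, with $h^*$ the optimal middle function.

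Dubuc's characterization then applies. When the functions are log-concave, continuous, and positive with finite positive integrals, equality forces $f_t$ and $f_0$ to agree, after normalization by their integrals, up to a translation almost everywhere. This gives
\[
\frac{f_t(x+c_t)}{Z(t)}=\frac{f_0(x)}{Z(0)}\quad\text{a.e.}
\]
for some $c_t\in\mathbb R^n$, with $c_0=0$. Lemma~\ref{lem:affineconsistency} then upgrades the family $(c_t)$ to $c_t=tc$ and yields $V(x,t)=V(x-tc,0)+bt$.

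The main obstacle is the correct invocation of the equality case. Dubuc's theorem is stated for the optimal (sup-convolution) middle function, or for measurable functions up to null sets. We must check that its hypotheses are met here: log-concavity holds, the functions are continuous and positive everywhere since $V$ is finite, and the integrals are finite and positive. We must also check that in the log-concave case the conclusion is a pure translation with no dilation. Dilations are excluded because the dilation factor in Dubuc's statement is governed by the ratio of the Pr\'ekopa weights, which is trivial in the multiplicative (geometric-mean) form.

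If citing Dubuc turns out to be delicate, I would use the fallback noted in the introduction, \cite{KW22}. That reference is an alternative route: equality in Pr\'ekopa's theorem for the marginal of the jointly convex $V$, using the convexity of $t\mapsto-\log Z(t)$ together with the affinity assumption, forces $V(x,t)-bt$ to be a translate of $V(\cdot,0)$. Either way, the final step is Lemma~\ref{lem:affineconsistency}.
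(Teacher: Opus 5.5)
Your proposal is correct and follows the paper's proof: joint convexity of $V$ gives the Pr\'ekopa--Leindler hypothesis, and the affinity of $-\log Z$ turns it into an equality. Dubuc's characterization \cite[Th\'eor\`eme~12]{Dub77} then yields the translations $c_t$, and Lemma~\ref{lem:affineconsistency} makes them linear in $t$. The paper takes $\lambda=\tfrac12$ with $h=e^{-V(\cdot,t/2)}$ and cites Dubuc directly, so your sup-convolution step is harmless extra care; your reason for excluding dilations is loose, but pure translations is indeed what the multiplicative equality case gives.
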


\begin{proof}
Fix $t>0$ and put $f_0=e^{-V(\cdot,0)}$, $f_t=e^{-V(\cdot,t)}$, and $h=e^{-V(\cdot,t/2)}$. Convexity of $V$ gives
\[
V\Bigl(\frac{x+y}{2},\frac t2\Bigr)\leq\frac12V(x,0)+\frac12V(y,t),\qquad\text{i.e.}\qquad h\Bigl(\frac{x+y}{2}\Bigr)\geq f_0(x)^{1/2}f_t(y)^{1/2},
\]
so the triple $(f_0,f_t,h)$ satisfies the hypothesis of the Pr\'ekopa--Leindler inequality at the fixed weights $(\tfrac12,\tfrac12)$, and the affine formula for $-\log Z$ gives $\int h=Z(t/2)=Z(0)^{1/2}Z(t)^{1/2}=(\int f_0)^{1/2}(\int f_t)^{1/2}$: equality holds. We invoke the equality characterization of the Pr\'ekopa--Leindler inequality for two functions at fixed weights, due to Dubuc \cite[Th\'eor\`eme~12]{Dub77} (see also \cite[Theorem~4.1]{KW22} for the many-function version, whose proof reduces to Dubuc's theorem): if equality holds, there are translation vectors and a log-concave probability density $e^{-\psi}$ such that, after modification on null sets, the normalized densities $f_0/\!\int f_0$ and $f_t/\!\int f_t$ are translates of $e^{-\psi}$; in particular there is $c_t\in\mathbb R^n$ with $e^{-V(x+c_t,t)}/Z(t)=e^{-V(x,0)}/Z(0)$ for almost every $x$. At $t=0$ this holds trivially with $c_0=0$. Lemma~\ref{lem:affineconsistency} now produces a single $c$ with $c_t=tc$ and the asserted identity.
\end{proof}

\subsection{The pyramid theorem}\label{subsec:pyramid}

\begin{thm}[Pyramid structure]\label{thm:pyramid}
Let $v\in\mathbb R^n\setminus E$, and let $\psi_t$ be a subsequential toric limit ray as in Theorem~\ref{thm:limitray}, taken along a subsequence of $k$'s that we fix. Assume that $\psi_t(x)=\varphi(x-tc)+nt$ for a vector $c\in\mathbb R^n$ and all $x,t$. Then $c\neq0$. Put
\[
\ell(y)=n-\langle c,y\rangle\qquad(y\in K).
\]
Then $0\leq\ell\leq n+1$ on $K$, and for every $s\in[0,n+1]$ the cap $C_s=\{y\in K:\ell(y)\leq s\}$ satisfies
\[
\vol(C_s)=\frac{s^n}{n!}.
\]
Moreover $C_0$ consists of a single point $p$, one has
\[
C_s=p+\frac{s}{n+1}(K-p)\qquad(0\leq s\leq n+1),
\]
and, with $B=\{y\in K:\ell(y)=n+1\}$, the body $K$ is the pyramid $K=\conv(\{p\}\cup B)$ with apex $p$ and base $B$; finally $h_K(c)=n$ and $h_K(-c)=1$.
\end{thm}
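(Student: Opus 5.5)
The plan is to pass everything through Legendre transforms. I will compare the monomial weights $q_{k,m}$ with the affine function $\ell$, use the integer codimension count of Proposition~\ref{prop:weights}(1) to bound the cap volumes from above, and let the identity $\int_Xg\,d\mu=n$ close the inequalities.

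First, the pointwise roof bound. By hypothesis $\psi_t^*(y)=\varphi^*(y)+t\langle c,y\rangle-nt=\varphi^*(y)-t\ell(y)$ for $y\in\intr(K)$. The relation $\varphi\leq\psi_t\leq\varphi+(n+1)t$ of Theorem~\ref{thm:limitray} dualizes to $\varphi^*-(n+1)t\leq\psi_t^*\leq\varphi^*$. Hence $0\leq\ell\leq n+1$ on $\intr(K)$, and by continuity on $K$.

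Next, each single term of the log-sum-exp gives
\[
u_{k,t}(x)\geq\tfrac1k\bigl(\langle m,x\rangle+tq_{k,m}-\log I_k(m/k)\bigr).
\]
Take $m_k/k\to y\in\intr(K)$ and $q_{k,m_k}/k\to\bar q$ along the fixed subsequence. Proposition~\ref{prop:laplace} then yields $\psi_t(x)\geq\langle y,x\rangle-\varphi^*(y)+t\bar q$, so $\psi_t^*(y)\leq\varphi^*(y)-t\bar q$, i.e.\ $\bar q\leq\ell(y)$. Uniformity of the Laplace asymptotic on compacts of $\intr(K)$ upgrades this to the following statement. For every compact $U\subset\intr(K)$ and every $\eta>0$, one has $q_{k,m}\leq k(\ell(m/k)+\eta)$ for all $m\in kU$ once $k$ is large.

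Now the concentration. The barycenter is $0$, so $\frac1{\vol K}\int_K\ell=n$. On the other side, $\frac1{kd_k}\sum_mq_{k,m}\to n$ by Proposition~\ref{prop:weights}(1). Points with $m/k$ near $\partial K$ carry vanishing mass, since $q\leq(n+1)k$. The empirical measure $\frac1{d_k}\sum_m\delta_{(m/k,\,q_{k,m}/k)}$ therefore converges to the graph measure of $\ell$ over $\mathbf 1_K\,dy/\vol K$. The reason is that the second coordinate is asymptotically at most $\ell$ while the averages agree, so the defect $\ell(m/k)-q_{k,m}/k$ tends to $0$ in $L^1$ of the empirical measure.

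Consequently, for every $s$, the quantity $\#\{m:q_{k,m}<sk\}/k^n$ has $\liminf\geq\vol\{y\in K:\ell(y)<s\}$. By Proposition~\ref{prop:weights}(1) this count equals $\operatorname{codim}F^{\lceil sk\rceil}_{k,1}\leq\binom{n+\lceil sk\rceil-1}{n}\sim (sk)^n/n!$. Hence $\vol\{\ell<s\}\leq s^n/n!$ for every $s$.

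Integrating the layer-cake formula,
\[
nV=\int_K\ell=\int_0^{n+1}\bigl(V-\vol\{\ell<s\}\bigr)ds\geq(n+1)V-\frac{(n+1)^{n+1}}{(n+1)!}=nV,
\]
where $V=\vol K$. Equality forces $\vol\{\ell<s\}=s^n/n!$ for almost every $s$, and monotonicity gives $\vol(C_s)=s^n/n!$ for all $s$. If $c=0$, then $\ell\equiv n$ and $\vol(C_s)=0$ for $s<n$, a contradiction; so $c\neq0$.

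Geometry. Since $\vol(C_s)>0$ for $s>0$ and $\ell\geq0$, we get $\min_K\ell=0$, so $C_0\neq\emptyset$ and $h_K(c)=n$. Since $\vol(C_{n+1})=V$ while $\vol(C_s)<V$ for $s<n+1$, we get $\max_K\ell=n+1$, so $h_K(-c)=1$.

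Convexity gives $(1-\lambda)C_{s_0}+\lambda C_{s_1}\subseteq C_{(1-\lambda)s_0+\lambda s_1}$. Since $\vol(C_s)^{1/n}=s/(n!)^{1/n}$ is linear in $s$, this is the equality case of Brunn--Minkowski for full-dimensional convex bodies, so all $C_s$ with $s>0$ are homothetic to $C_{n+1}=K$ with ratio $s/(n+1)$. Letting $s\to0$ shows $C_0$ has diameter $0$, so $C_0=\{p\}$. Convexity gives $p+\frac{s}{n+1}(K-p)\subseteq C_s$, and equal volumes upgrade the inclusion to equality.

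Pyramid. Take $y\in K$ with $\ell(y)=s$. Then $y$ lies in $C_s=p+\frac{s}{n+1}(K-p)$, so $y=p+\frac{s}{n+1}(b-p)$ for some $b\in K$. Since $\ell$ is affine, $\ell(b)=n+1$, so $b\in B$. Thus $K=\conv(\{p\}\cup B)$.

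The main obstacle I expect is the concentration step: justifying $\liminf\#\{q_{k,m}<sk\}/k^n\geq\vol\{\ell<s\}$ from an $L^1$ bound together with only a one-sided uniform bound valid on compacts of $\intr(K)$. I would handle boundary lattice points by the crude bound $q\leq(n+1)k$ and the fact that their proportion vanishes as $U$ exhausts $\intr(K)$. The level sets $\{\ell=s\}$ have measure zero because $c\neq0$, or by working with almost every $s$ before that is known.
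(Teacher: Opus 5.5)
Your argument is correct, and its central step differs from the paper's.

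\textbf{The paper's route.} It passes to a weak limit $\Gamma$ of the empirical measures $k^{-n}\sum_m\delta_{(m/k,\,q_{k,m}/k)}$. It proves $r\le\ell(y)$ for $\Gamma$-a.e.\ $(y,r)$, exactly as you do. It upgrades this to $r=\ell(y)$ $\Gamma$-a.e.\ using the trace limit of Proposition~\ref{prop:weights}(1) together with the barycenter. It then reads off $\vol\{\ell<s\}=s^n/n!$ from the \emph{exact} count $\operatorname{codim}F^{j_k}_{k,1}=\binom{n+j_k-1}{n}$, which requires the hypersurface exclusion of Theorem~\ref{thm:hypersurface}. Only afterwards does it deduce $0\le\ell\le n+1$, from $r=\ell$ a.e.

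\textbf{Your route.} You obtain $0\le\ell\le n+1$ at once by dualizing the slice bounds. You use only the trivial inequality $\operatorname{codim}F^j_{k,1}\le\binom{n+j-1}{n}$ to get $\vol\{\ell<s\}\le s^n/n!$. You then close with the layer-cake identity, where the barycenter and $\vol(K)=(n+1)^n/n!$ force equality.

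\textbf{What each buys.} Your argument bypasses Theorem~\ref{thm:hypersurface} and the limit measure $\Gamma$ entirely. As a by-product it recovers the count $\#\{m:q_{k,m}<sk\}/k^n\to s^n/n!$ from the two one-sided bounds. The paper's route, in exchange, establishes the identification $\Gamma=(\mathrm{id}\times\ell)_*\mathrm{Leb}_K$, which it reuses in Theorem~\ref{thm:apex}. The hypersurface exclusion is needed there anyway, for the exact full-rank comparison with principal lattice sets.

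\textbf{Two simplifications to your write-up.}
\begin{enumerate}
\item Your concentration paragraph, and with it the trace limit and the obstacle you anticipated, is unnecessary. The uniform one-sided bound $q_{k,m}\le k(\ell(m/k)+\eta)$ on $kU$ already gives $\{m:q_{k,m}<sk\}\supseteq\{m\in kU:\ell(m/k)<s-\eta\}$. Exhausting $\intr(K)$ by such $U$ and letting $\eta\downarrow0$ then yields the $\liminf$ bound directly.
\item The Brunn--Minkowski step is redundant. Take any $p\in C_0$, which is nonempty since $\min_K\ell=0$. The inclusion $p+\frac{s}{n+1}(K-p)\subseteq C_s$ with equal volumes gives the homothety directly, and then $C_0=\bigcap_{s>0}C_s=\{p\}$, as in the paper.
\end{enumerate}
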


\begin{proof}
Along the fixed subsequence, define finite measures on $K\times[0,n+1]$ by
\[
\Gamma_k=\frac{1}{k^n}\sum_{m\in S_k}\delta_{(m/k,\,q_{k,m}/k)},
\]
where $\delta_{(y,r)}$ denotes the Dirac mass at the point of $K\times[0,n+1]$ with coordinates $(y,r)$.
Their masses $d_k/k^n$ tend to $\vol(K)$, and their first marginals converge weakly to Lebesgue measure on $K$, both by Lemma~\ref{lem:monomial} and lattice Riemann sums; here the boundary of the full-dimensional compact convex body $K$ is Lebesgue-negligible, since comparing $K$ with its homotheties $x_0+\lambda(K-x_0)$ from an interior point $x_0$ and letting $\lambda\uparrow1$ shows $\vol(\partial K)=0$. Proposition~\ref{prop:weights}(1) gives
\begin{equation}\label{eq:gammamass}
\int r\,d\Gamma_k=\frac{1}{k^n}\sum_{m\in S_k}\frac{q_{k,m}}{k}\longrightarrow n\vol(K).
\end{equation}
Pass to a further subsequence along which $\Gamma_k$ converges weakly to a finite measure $\Gamma$; its first marginal is Lebesgue measure on $K$ and $\int r\,d\Gamma=n\vol(K)$.

\smallskip\noindent\textbf{Step 1.} In this step, we show $r\leq\ell(y)$ for $\Gamma$-almost every $(y,r)$. The first marginal gives no mass to $\partial K\times[0,n+1]$, so it suffices to consider $(y,r)\in\operatorname{supp}\Gamma$ with $y\in\intr(K)$. Choose atoms $(m_k/k,q_{k,m_k}/k)\to(y,r)$: such a choice is possible because every ball around a point of $\operatorname{supp}\Gamma$ has positive $\Gamma$-mass, hence, by weak convergence and the Portmanteau theorem, contains atoms of $\Gamma_k$ for all large $k$ in the subsequence, and a diagonal choice over shrinking balls produces the sequence. By Proposition~\ref{prop:laplace}, for every $x\in\mathbb R^n$,
\[
\frac1k\log|s_{k,m_k}(x)|^2\longrightarrow\langle y,x\rangle-\varphi^*(y).
\]
Since the term indexed by $m_k$ is one summand in the definition of $u_{k,t}$, local uniform convergence $u_{k,t}\to\psi_t$ gives
\[
\psi_t(x)\geq\langle y,x\rangle-\varphi^*(y)+tr\qquad(x\in\mathbb R^n,\ t\geq0).
\]
Taking Legendre transforms in $x$ yields $\psi_t^*(y)\leq\varphi^*(y)-tr$. On the other hand, the translation formula gives directly
\[
\psi_t^*(y)=\sup_x\bigl(\langle y,x\rangle-\varphi(x-tc)-nt\bigr)=\varphi^*(y)+t\langle c,y\rangle-nt=\varphi^*(y)-t\,\ell(y).
\]
Comparing at any $t>0$ proves $r\leq\ell(y)$.

\smallskip\noindent\textbf{Step 2.} In this step, we upgrade the inequality to the identity $r=\ell(y)$ $\Gamma$-almost everywhere. Since the first marginal of $\Gamma$ is Lebesgue measure on $K$ and the barycenter of $K$ is $0$,
\[
\int_K\ell(y)\,dy=n\vol(K)-\Bigl\langle c,\int_Ky\,dy\Bigr\rangle=n\vol(K)=\int r\,d\Gamma,
\]
and $r\leq\ell(y)$ almost everywhere forces $r=\ell(y)$ $\Gamma$-almost everywhere.

\smallskip\noindent\textbf{Step 3.} In this step, we compute the sublevel counts and prove $c\neq0$ and the cap-volume formula. Proposition~\ref{prop:weights}(1) identifies the nested limit index sets and gives
\begin{equation}\label{eq:levelcount}
\#\{m\in S_k:q_{k,m}<j\}=\operatorname{codim}F_{k,1}^j\qquad(1\leq j\leq N_k).
\end{equation}
Fix $0<s<n+1$ and integers $j_k$ with $j_k/k\to s$, and choose $\varepsilon>0$ with $s<n+1-\varepsilon$, so that $j_k-1\leq(n+1-\varepsilon)k$ for all large $k$. By Theorem~\ref{thm:hypersurface}, for all large $k$ no nonzero polynomial of degree less than $j_k$ vanishes on $S_k$, so Lemma~\ref{lem:jet} gives $\operatorname{codim}F_{k,1}^{j_k}=\binom{n+j_k-1}{n}$, and therefore
\begin{equation}\label{eq:capcount}
\frac{1}{k^n}\#\{m\in S_k:q_{k,m}<j_k\}\longrightarrow\frac{s^n}{n!}.
\end{equation}
We record how the moving threshold interacts with weak convergence: the count in \eqref{eq:capcount} is $k^n\,\Gamma_k\bigl(K\times[0,j_k/k)\bigr)$, and for every $\eta>0$ we eventually have $s-\eta<j_k/k<s+\eta$, so the Portmanteau theorem sandwiches
\[
\Gamma\bigl(K\times[0,s-\eta)\bigr)\leq\liminf_k\Gamma_k\bigl(K\times[0,j_k/k)\bigr)\leq\limsup_k\Gamma_k\bigl(K\times[0,j_k/k)\bigr)\leq\Gamma\bigl(K\times[0,s+\eta]\bigr).
\]
If $c$ were zero, then $\ell\equiv n$ and Step~2 would put $\Gamma$ on the level $\{r=n\}$; taking $s\in(0,n)$ and $\eta$ with $s+\eta<n$, the sandwich would force the positive limit $s^n/n!$ of \eqref{eq:capcount} to be at most $\Gamma(K\times[0,s+\eta])=0$, a contradiction. Hence $c\neq0$, and every level set of the affine function $\ell$ is Lebesgue-negligible. By Step~2 and the identification of the first marginal, $\Gamma(K\times[0,u))=\vol(\{y\in K:\ell(y)<u\})$ and $\Gamma(K\times[0,u])=\vol(\{y\in K:\ell(y)\leq u\})$ for every $u$; the sandwich and \eqref{eq:capcount} therefore give $\vol(\{\ell<s-\eta\})\leq s^n/n!\leq\vol(\{\ell\leq s+\eta\})$ for every $\eta>0$, and letting $\eta\downarrow0$, using the negligibility of the level sets of $\ell$, yields, for $0<s<n+1$,
\[
\vol(\{y\in K:\ell(y)<s\})=\frac{s^n}{n!}.
\]
The bounds $0\leq q_{k,m}/k\leq n+1$ and $r=\ell(y)$ give $0\leq\ell\leq n+1$ almost everywhere on $K$, hence everywhere: if $\ell$ violated one of the bounds at a point $y_0\in K$, then, by continuity, it would violate it on the intersection of a ball around $y_0$ with $\intr(K)$, a nonempty open set of positive Lebesgue measure. The strict and non-strict sublevel sets have equal volume, and continuity in $s$ extends the formula to $s\in[0,n+1]$. The formula forces $\min_K\ell=0$ and $\max_K\ell=n+1$: a positive minimum would contradict the formula for small $s$, and a maximum $M<n+1$ would give $\vol(K)=\vol(C_M)=M^n/n!<(n+1)^n/n!$, contradicting the volume hypothesis. Precisely, $\min_K\ell=n-h_K(c)=0$ and $\max_K\ell=n+h_K(-c)=n+1$ give $h_K(c)=n$ and $h_K(-c)=1$.

\smallskip\noindent\textbf{Step 4.} We conclude the proof in this step. Put $T=n+1$ and choose $p\in C_0$, which is nonempty and compact. Convexity gives, for $0<s<T$,
\[
p+\frac sT(K-p)\subseteq C_s,
\]
since $\ell$ is affine with $\ell(p)=0$ and $\ell\leq T$ on $K$. The left side has volume $(s/T)^n\vol(K)=s^n/n!$, which equals $\vol(C_s)$; two nested compact convex bodies of equal volume coincide, so
\begin{equation}\label{eq:homothety}
C_s=p+\frac sT(K-p)\qquad(0<s<T),
\end{equation}
and the identity extends to the endpoints: at $s=T$ both sides equal $K$, while at $s=0$ we have $C_0=\bigcap_{0<s<T}C_s=\bigcap_{0<s<T}\bigl(p+\tfrac sT(K-p)\bigr)=\{p\}$ by the boundedness of $K$, so $C_0=\{p\}$ and \eqref{eq:homothety} holds for all $s\in[0,T]$. For $y\in K$ with $a=\ell(y)>0$, \eqref{eq:homothety} at $s=a$ writes $y=p+\frac aT(z-p)$ with $z\in K$; applying $\ell$ gives $\ell(z)=T$, so $z\in B$ and $y\in\conv(\{p\}\cup B)$. The reverse inclusion is convexity, so $K=\conv(\{p\}\cup B)$.
\end{proof}

\section{Equality bodies are simplices}\label{sec:simplex}

In this section, we complete the analytic half: the apex of the pyramid of Theorem~\ref{thm:pyramid} minimizes the chosen radial direction (Theorem~\ref{thm:apex}); since the good directions are dense, a convexity argument forces $K$ to be a simplex (Theorem~\ref{thm:simplex}); and the volume and barycenter hypotheses then reduce Theorem~\ref{thm:main} to a critical-lattice statement for $S_n$ (Proposition~\ref{prop:reduction}). On the Fano side, this step is carried out by characterizations of $\mathbb P^n$ (\cite[Theorem~1.1]{Fuj18}; \cite[Theorems~13(2) and~36]{Liu18}; \cite[Theorems~2 and~10]{LZ18}; \cite[Theorem~3.3]{Zha25}; \cite{LM25}), none of which has a convex-geometric counterpart; the present section and the next replace them (Remark~\ref{rem:dictionary}).

\subsection{Apex selection}\label{subsec:apex}

\begin{thm}[Apex selection]\label{thm:apex}
In the situation of Theorem~\ref{thm:pyramid} (so $v\in\mathbb R^n\setminus E$, the toric rays converge along a fixed subsequence to $\psi_t(x)=\varphi(x-tc)+nt$, and $K=\conv(\{p\}\cup B)$ is the resulting pyramid with apex $p$), one has
\[
\langle v,p\rangle=\min_{y\in K}\langle v,y\rangle.
\]
\end{thm}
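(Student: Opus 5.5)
The plan is to read off from Lemma~\ref{lem:grassmann} which monomials land in the low-weight sets $\{m:q_{k,m}<j\}$. The claim is that these sets are the $v$-\emph{minimal} unisolvent subsets of $S_k$. Then, for small $s$, they must concentrate near the face of $K$ minimizing $\langle v,\cdot\rangle$. Comparing with the cap description $C_s=p+\frac{s}{n+1}(K-p)$ of Theorem~\ref{thm:pyramid} then pins the apex $p$ to that face.

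\emph{Step 1 (combinatorial description of the limit index sets).} Fix $k$ and $j\le N_k$, and put $r=\dim F^j_{k,1}$. By Lemma~\ref{lem:jet}, $F^j_{k,1}$ is the kernel of the jet map, whose matrix in the normalized monomial basis is $(m^\alpha)_{|\alpha|<j,\,m\in S_k}$ times an invertible diagonal matrix.

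For a subspace given as a kernel, the Pl\"ucker coordinate $c_I$ is nonzero if and only if the complementary maximal minor of the defining matrix is nonzero. This is standard duality of Pl\"ucker coordinates, up to the nonzero scalars coming from the normalizations $s_{k,m}=c_mz^m$. Hence $c_I\neq0$ if and only if $J=S_k\setminus I$ is a basis of the evaluation matroid $\mathcal M_{k,j}$ on $S_k$. In this matroid, independence of $J$ means the evaluation vectors $(m^\alpha)_{|\alpha|<j}$, $m\in J$, are linearly independent.

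Under the radial degeneration, $I_{k,j}$ is the unique $I$ with $c_I\ne0$ maximizing $\sum_{m\in I}\langle v,m\rangle$. Equivalently, $\{m\in S_k:q_{k,m}<j\}=S_k\setminus I_{k,j}$ is the unique \emph{minimum-$v$-weight basis} of $\mathcal M_{k,j}$. The identification of these two sets is Proposition~\ref{prop:weights}(1), and uniqueness is exactly the admissibility of $v$. By the matroid greedy algorithm, this basis is obtained by scanning $S_k$ in increasing order of $\langle v,m\rangle$. In particular, for any threshold $\beta$, put $T=\{m\in S_k:\langle v,m\rangle\le\beta k\}$. If $T$ has full rank in $\mathcal M_{k,j}$, then the whole minimal basis lies in $T$.

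\emph{Step 2 (full rank near the minimizing face).} Let $\beta_0=\min_K\langle v,\cdot\rangle$ and fix $\eta>0$. The set $K\cap\{\langle v,\cdot\rangle<\beta_0+\eta\}$ is convex with nonempty interior, so it contains a Euclidean ball of some radius $\rho>0$. Dilating, $\intr(kK)\cap\{\langle v,\cdot\rangle<(\beta_0+\eta)k\}$ contains a ball of radius $\rho k$. That ball contains a lattice principal set $G=\{b+a:a\in\mathbb Z^n_{\ge0},\ \sum a_i<j\}$ as soon as $j\le\sigma k$, for a constant $\sigma=\sigma(\rho,n)>0$. By Lemma~\ref{lem:principal}, $G$ is unisolvent for degree $<j$, so $T\supseteq G$ has full rank in $\mathcal M_{k,j}$. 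Step~1 then gives
\[
\{m\in S_k:q_{k,m}<j\}\subseteq\{m:\langle v,m\rangle\le(\beta_0+\eta)k\}\qquad(j\le\sigma k).
\]

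\emph{Step 3 (passing to the limit).} Take $0<s<\sigma$ and $j_k=\lfloor sk\rfloor$. In terms of the measures $\Gamma_k$ of the proof of Theorem~\ref{thm:pyramid}, Step~2 says $\Gamma_k$ gives no mass to $\{(y,r):r<j_k/k,\ \langle v,y\rangle>\beta_0+\eta\}$. Pass to the weak limit $\Gamma$, using the Portmanteau sandwich exactly as in Step~3 of that proof. Combined with the identity $r=\ell(y)$ holding $\Gamma$-a.e., this yields
\[
\vol\bigl(\{y\in K:\ell(y)<s',\ \langle v,y\rangle>\beta_0+\eta\}\bigr)=0
\]
for every $s'<s$. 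Since $C_{s'}=p+\frac{s'}{n+1}(K-p)$ is a full-dimensional convex body, it follows that $C_{s'}\subseteq\{\langle v,\cdot\rangle\le\beta_0+\eta\}$. As $p\in C_{s'}$, we get $\langle v,p\rangle\le\beta_0+\eta$. Letting $\eta\downarrow0$ gives $\langle v,p\rangle=\min_K\langle v,\cdot\rangle$.

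The main obstacle is Step~1. It requires carefully matching the Pl\"ucker coordinates of the kernel $F^j_{k,1}$ with the complementary minors of the jet matrix, including the sign and normalization bookkeeping, and confirming that maximizing $\langle v,\cdot\rangle$ on $P(F^j)$ is the same as minimizing over the complement. A secondary point needing care is that $\sigma$ in Step~2 depends only on $\eta$ and not on $k$; this holds because the ball radius scales linearly in $k$.
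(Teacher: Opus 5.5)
Your proposal is correct. Its Step~1 is the same Pl\"ucker/minor duality the paper uses to identify $J_{k,j}=\{m:q_{k,m}<j\}$ as a minimum-$\langle v,\cdot\rangle$-weight basis of the evaluation matroid.

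One imprecision there. Admissibility of $v$ only makes the maximizing \emph{point} $a_j$ of the weight polytope unique, and several index sets may share that point. The facts you actually need are $c_{I_{k,j}}\neq0$ and that this coordinate has maximal weight. These come from the Grassmann limit being the coordinate subspace on $I_{k,j}$, so that the maximal-weight part of the Pl\"ucker vector is a nonzero multiple of $e_{I_{k,j}}$; the paper spells this out. Uniqueness itself is not needed, because your exchange argument shows that \emph{every} minimum-weight basis lies in a full-rank sublevel set $T$.

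From there the two routes diverge.
\begin{itemize}
\item The paper uses only the scalar minimality of the total weight. It compares the mean of $\langle v,\cdot\rangle$ over $J_{k,j_k}$ with its mean over a principal lattice set $G_k$ near a near-minimizer $y_0$. It then identifies the limit of the former as the mean over the cap $C_s$, which by the homothety $C_s=p+\frac{s}{n+1}(K-p)$ and the barycenter hypothesis equals $(1-\frac{s}{n+1})\langle v,p\rangle$. For this it establishes full weak convergence $\Gamma_k\to(\mathrm{id}\times\ell)_*\mathrm{Leb}_K$ along the subsequence, uses the cap count \eqref{eq:capcount}, and controls a moving window.
\item You instead use the matroid exchange property to localize $J_{k,j}$ pointwise inside the slab $\{\langle v,\cdot\rangle\le(\beta_0+\eta)k\}$. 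Full rank of the slab comes directly from a principal set, so no appeal to Theorem~\ref{thm:hypersurface} is needed at this step. After that, a one-sided Portmanteau bound on an open set, the a.e.\ identity $r=\ell(y)$, and $\ell(p)=0$ suffice. Even the homothety is dispensable, since $K$ has positive volume near each of its points.
\end{itemize}

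Your route is shorter and gives the stronger statement that low-weight monomials actually sit near the $v$-minimizing face. The paper's averaging argument needs less combinatorial structure (only the minimum-weight property of one set against one competitor), at the cost of more measure-theoretic bookkeeping.
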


\begin{proof}
Put $T=n+1$ and $\ell(y)=n-\langle c,y\rangle$ as in Theorem~\ref{thm:pyramid}.

\smallskip\noindent\textbf{Step 1.} In this step, we identify the low-weight index sets as weight-minimizing interpolation sets. For a positive integer $j\leq N_k$, let $A_{k,j}$ be the matrix of the order-less-than-$j$ Taylor map of $H_k$ at the point $1$, written in the monomial basis, so that its kernel is $F_{k,1}^j$. Fix $0<s<T$ and integers $j_k$ with $j_k/k\to s$. For $1\leq j\leq N_k$ put $B_{k,j}=\binom{n+j-1}{n}$. By Theorem~\ref{thm:hypersurface} and Lemma~\ref{lem:jet}, for all large $k$ the matrix $A_{k,j_k}$ has full row rank $B_{k,j_k}$. Let $I_{k,j}\subseteq S_k$ be the coordinate index set of the radial limit of $F_{k,1}^j$ supplied by Proposition~\ref{prop:weights}, and put $J_{k,j}=S_k\setminus I_{k,j}$. Proposition~\ref{prop:weights}(1) gives $J_{k,j}=\{m\in S_k:q_{k,m}<j\}$ and $\#J_{k,j_k}=\operatorname{codim}F_{k,1}^{j_k}=B_{k,j_k}$.

Here and below we use the notation of Lemma~\ref{lem:grassmann} applied with $H=H_k$, $S=S_k$, $e_m=s_{k,m}$, and $F^j=F_{k,1}^j$, so that $e_I=\bigwedge_{m\in I}s_{k,m}$ and $D_R=U_{(e^{Rv_1},\dots,e^{Rv_n})}$ acts by $D_Rs_{k,m}=e^{R\langle v,m\rangle}s_{k,m}$. We claim that $J_{k,j_k}$ minimizes $\sum_{m\in J}\langle v,m\rangle$ among all $B_{k,j_k}$-element subsets $J\subseteq S_k$ on which evaluation of polynomials of degree less than $j_k$ is nonsingular. Indeed, a maximal minor of $A_{k,j_k}$ on a column set $J$ is nonzero exactly when $\ker(A_{k,j_k})\cap\operatorname{span}\{e_m:m\in J\}=0$, which happens exactly when the Pl\"ucker coordinate of $\ker(A_{k,j_k})=F_{k,1}^{j_k}$ on the complementary index set $I=S_k\setminus J$ is nonzero. In the radial degeneration of Lemma~\ref{lem:grassmann}, the limit of $D_RF_{k,1}^{j_k}$ is the coordinate subspace spanned by $\{e_m:m\in I_{k,j_k}\}$. This uses more than the unique-maximizer property of the admissible direction $v$, since a priori several index sets could share the maximal weight sum. Applying $D_R$ to a Pl\"ucker vector $\sum_Ic_Ie_I$ of $F_{k,1}^{j_k}$ scales each coordinate $c_I$ by $e^{R\sum_{m\in I}\langle v,m\rangle}$, so the projectivized Pl\"ucker vector of $D_RF_{k,1}^{j_k}$ converges to the class of the maximal-weight part of $\sum_Ic_Ie_I$; by the continuity of the Pl\"ucker embedding, that class is the Pl\"ucker class of the limit subspace, namely the class of the single coordinate $e_{I_{k,j_k}}$. Consequently the Pl\"ucker coordinate of $F_{k,1}^{j_k}$ on $I_{k,j_k}$ is nonzero, it is the unique nonzero Pl\"ucker coordinate of maximal weight, and its weight $\sum_{m\in I_{k,j_k}}\langle v,m\rangle$ dominates the weight of every other nonzero Pl\"ucker coordinate. Since $\sum_{m\in S_k}\langle v,m\rangle$ does not depend on $J$, the complementary set $J_{k,j_k}$ has minimal $\langle v,\cdot\rangle$-weight, as claimed.

\smallskip\noindent\textbf{Step 2.} In this step, we compute the limit of the normalized weight of $J_{k,j_k}$. Along the fixed subsequence, define again the empirical measures $\Gamma_k=k^{-n}\sum_{m\in S_k}\delta_{(m/k,\,q_{k,m}/k)}$ on $K\times[0,n+1]$ and let $\Gamma$ be any weak subsequential limit. Rerunning the measure-theoretic argument for the fixed data $(v,c)$: the first marginal of $\Gamma$ is Lebesgue measure on $K$, $\int r\,d\Gamma=n\vol(K)$ by Proposition~\ref{prop:weights}(1), and for every $(y,r)\in\operatorname{supp}\Gamma$ with $y\in\intr(K)$, atoms $(m_k/k,q_{k,m_k}/k)\to(y,r)$ together with Proposition~\ref{prop:laplace} and the locally uniform convergence $u_{k,t}\to\psi_t$ give $\psi_t(x)\geq\langle y,x\rangle-\varphi^*(y)+tr$ for all $x,t$; taking Legendre transforms and using the translation formula $\psi_t^*(y)=\varphi^*(y)-t\ell(y)$ yields $r\leq\ell(y)$, and the barycenter identity $\int_K\ell(y)\,dy=n\vol(K)=\int r\,d\Gamma$ upgrades this to $r=\ell(y)$ $\Gamma$-almost everywhere. Since $c\neq0$ by Theorem~\ref{thm:pyramid}, the level sets of $\ell$ are Lebesgue-negligible, so $\Gamma=(\mathrm{id}\times\ell)_*(\mathrm{Leb}_K)$ independently of the subsequential choice, where $\mathrm{Leb}_K$ denotes the unnormalized Lebesgue measure restricted to $K$; being independent of the choice, the weak convergence $\Gamma_k\to\Gamma$ holds along the full fixed subsequence. Since $J_{k,j_k}=\{m:q_{k,m}<j_k\}$, the sum $k^{-n}\sum_{m\in J_{k,j_k}}\langle v,m/k\rangle$ equals $\int_{K\times[0,j_k/k)}\langle v,y\rangle\,d\Gamma_k(y,r)$. Fix $\eta\in(0,s)$. The function $\langle v,y\rangle$ is bounded on $K$ and eventually $s-\eta<j_k/k<s+\eta$, so replacing the moving window $[0,j_k/k)$ by $[0,s-\eta)$ changes the integral by at most a constant multiple of $\Gamma_k\bigl(K\times[s-\eta,s+\eta]\bigr)$, whose upper limit is at most $\vol(\{s-\eta\leq\ell\leq s+\eta\})$ by the Portmanteau theorem; moreover the set $K\times[0,s-\eta)$ has $\Gamma$-negligible boundary, so $\int_{K\times[0,s-\eta)}\langle v,y\rangle\,d\Gamma_k\to\int_{\{\ell<s-\eta\}}\langle v,y\rangle\,dy$. Letting $\eta\downarrow0$ and using the negligibility of the level sets of $\ell$ gives
\[
\frac{1}{k^n}\sum_{m\in J_{k,j_k}}\Bigl\langle v,\frac mk\Bigr\rangle\longrightarrow\int_{C_s}\langle v,y\rangle\,dy.
\]
Since $\#J_{k,j_k}/k^n\to s^n/n!=\vol(C_s)$ by \eqref{eq:capcount}, which holds for the present fixed direction and subsequence, we conclude
\begin{equation}\label{eq:capaverage}
\frac{1}{\#J_{k,j_k}}\sum_{m\in J_{k,j_k}}\Bigl\langle v,\frac mk\Bigr\rangle\longrightarrow\frac{1}{\vol(C_s)}\int_{C_s}\langle v,y\rangle\,dy.
\end{equation}
By the homothety identity $C_s=p+\frac sT(K-p)$ of Theorem~\ref{thm:pyramid}, the barycenter of $C_s$ is $p+\frac sT(0-p)=(1-\frac sT)p$, because the barycenter of $K$ is $0$. Hence the right side of \eqref{eq:capaverage} equals $(1-\frac sT)\langle v,p\rangle$.

\smallskip\noindent\textbf{Step 3.} In this step, we compare with principal lattice interpolation sets near a minimizer and conclude. Fix $\delta>0$ and choose $y_0\in\intr(K)$ with $\langle v,y_0\rangle\leq\min_{z\in K}\langle v,z\rangle+\delta$, and $\rho>0$ with the Euclidean ball of radius $\rho$ about $y_0$ contained in $\intr(K)$. Fix $s$ with $0<s<T$ and $3s<\rho$, take $j_k$ with $j_k/k\to s$, and choose $b_k\in\mathbb Z^n$ with $b_k/k\to y_0$. For all large $k$, the principal lattice set
\[
G_k=\Bigl\{b_k+a:a\in\mathbb Z_{\geq0}^n,\ \sum_ia_i<j_k\Bigr\}
\]
is contained in $S_k$, has $B_{k,j_k}$ elements, and is unisolvent for polynomials of total degree less than $j_k$ by Lemma~\ref{lem:principal}. The minimizing property of $J_{k,j_k}$ from Step~1 therefore gives
\[
\frac{1}{\#J_{k,j_k}}\sum_{m\in J_{k,j_k}}\Bigl\langle v,\frac mk\Bigr\rangle\leq\frac{1}{\#G_k}\sum_{m\in G_k}\Bigl\langle v,\frac mk\Bigr\rangle\leq\langle v,y_0\rangle+C_v\,s+o(1),
\]
where $C_v$ depends only on $v$, because every point of $G_k/k$ lies within $2s+o(1)$ of $y_0$. Combining with Step~2 and letting $k\to\infty$,
\[
\Bigl(1-\frac sT\Bigr)\langle v,p\rangle\leq\min_{z\in K}\langle v,z\rangle+\delta+C_v\,s.
\]
Letting first $s\downarrow0$ at fixed $\delta$ (which keeps the constraint $3s<\rho$ admissible) and then $\delta\downarrow0$ gives $\langle v,p\rangle\leq\min_{z\in K}\langle v,z\rangle$; the reverse inequality holds since $p\in K$.
\end{proof}

\subsection{Dense pyramid apices force a simplex}\label{subsec:densapex}

\begin{thm}\label{thm:densapex}
Let $K\subset\mathbb R^n$ be a full-dimensional compact convex body and let $G\subseteq\mathbb R^n$ be a dense set. Suppose that for every $v\in G$ there exist a point $p_v\in K$, an affine hyperplane $H_v$ not containing $p_v$, and a compact convex set $B_v\subseteq H_v$ such that
\[
K=\conv(\{p_v\}\cup B_v)\qquad\text{and}\qquad\langle v,p_v\rangle=\min_{y\in K}\langle v,y\rangle.
\]
Then $K$ is an $n$-dimensional simplex.
\end{thm}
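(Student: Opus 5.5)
The plan is to show first that every apex $p_v$ is a vertex (an extreme point) of $K$ with a very special local structure. Then density of $G$ will force the set of extreme points to be finite with exactly $n+1$ elements.

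\textbf{Step 1: cone structure at each apex.} Since $K=\conv(\{p_v\}\cup B_v)$ with $p_v\notin H_v$ and $B_v\subseteq H_v$, every point of $K$ other than $p_v$ lies on a segment from $p_v$ to $B_v$. Hence near $p_v$, $K$ coincides with the cone $p_v+\mathbb R_{\geq0}(B_v-p_v)$. In particular $p_v$ is an extreme point of $K$, and the only extreme points of $K$ are $p_v$ together with the extreme points of $B_v$. The latter all lie in $H_v$.

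\textbf{Step 2: finitely many apices.} The normal cone $N_K(p_v)$ of $K$ at $p_v$ is the polar of the tangent cone $\mathbb R_{\geq0}(B_v-p_v)$. Because $B_v$ spans $H_v$ (as $K$ is full-dimensional), this tangent cone is pointed, so $N_K(p_v)$ is full-dimensional. The minimizing condition says $-v\in N_K(p_v)$. I would next show that distinct apices are finitely many. Suppose $p\neq p'$ are two apices. Then $p'$ is an extreme point of $K$ different from $p$, so $p'\in H_{v}$ where $p=p_v$. Likewise every extreme point except $p'$ lies in $H_{v'}$. Take any extreme point $q\notin\{p,p'\}$. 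The extreme points of $K$ other than $p$ all lie in $H_v$. Consequently all apices other than $p$ lie in one hyperplane. Iterating this over several apices, the extreme points of $K$ form a set $X$ such that, for each apex $a$, the set $X\setminus\{a\}$ is contained in a hyperplane avoiding $a$.

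\textbf{Step 3: density forces enough apices.} This is the main obstacle. The union of the open normal cones of the apices must contain $-G$ up to boundaries. If the apices were fewer than $n+1$, they would all lie in the hyperplane $H_{v_0}$ of one apex together with at most $n-1$ others. Take $v\in G$ close to a direction whose minimizing face on $K$ lies in $B_{v_0}$ and avoids the apex points, for instance the inner normal of a point of $H_{v_0}$ side. I would choose $v$ near the normal $-\nu$ of the facet $B_{v_0}$, perturbed so that the minimizer set of $\langle v,\cdot\rangle$ is a single extreme point $q$ of $B_{v_0}$; density of $G$ gives such $v$ in an open set. Then $p_v=q$, so $q$ is an apex. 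Running over open sets of directions in the normal fan of $B_{v_0}$ (inside $H_{v_0}$) shows that every exposed point of $B_{v_0}$ near the normal of that facet arises as an apex. From Step 2, the apices are affinely independent in the sense that each avoids the hyperplane through the others. Therefore there are at most $n+1$ of them.

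\textbf{Step 4: conclusion.} Combining these steps, the set $X$ of extreme points of $K$ is contained in the closure of the apex set, because exposed points are dense in the extreme points (Straszewicz) and each exposed point is a minimizer for an open set of nearby directions meeting $G$. A limiting argument, using the fact that apices are isolated since their normal cones are full-dimensional, then gives $X\subseteq\{\text{apices}\}$. Hence $X$ is finite with at most $n+1$ affinely independent points. Full-dimensionality forces exactly $n+1$ points, so $K=\conv X$ is a simplex. The delicate point to verify carefully is that an exposed point is indeed an apex. This requires that the apex $p_v$ is the unique minimizer for a generic $v$ near the exposing direction. Uniqueness holds because otherwise the minimizing face would contain $p_v$ and a point of $B_v$, and the cone structure of Step 1 then makes this face a segment from $p_v$; I would handle it by choosing $v$ outside the countably many directions that are normal to edges.
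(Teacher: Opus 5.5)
Your Steps~1 and~2 are sound and already contain the combinatorial core of the argument. For each apex $a=p_v$, every extreme point of $K$ other than $a$ lies in $H_v$, and $H_v$ does not contain $a$. Hence no apex lies in the affine hull of the others, the apex set $P=\{p_v:v\in G\}$ is affinely independent, and $\#P\leq n+1$. The paper says the same thing differently: it fixes affinely independent apices $p_0,\dots,p_n$, shows $H_i=\aff\{p_j:j\neq i\}$, and notes that any further apex would have to lie on every $H_i$.

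The gap is in Steps~3 and~4, where you try to show that every extreme point of $K$ is an apex. The claim that ``each exposed point is a minimizer for an open set of nearby directions'' fails for a general convex body. At a smooth boundary point the normal cone is a single ray, so the set of directions minimized there has empty interior. That claim is essentially what has to be proved, so it cannot be used as an input. Step~3 has the same flaw in ``density of $G$ gives such $v$ in an open set.'' Two further inferences also do not hold. First, ``apices are isolated because their normal cones are full-dimensional'' is false, since vertices with full-dimensional normal cones can accumulate. Second, choosing $v$ outside ``countably many edge normals'' is unjustified, and uniqueness of the minimizer at $v$ is never needed anyway.

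You can repair this with what you already have. Take any $w\in\mathbb R^n$ and choose $v_j\in G$ with $v_j\to-w$. Since $p_{v_j}$ minimizes $\langle v_j,\cdot\rangle$ on $K$, we get $\langle -v_j,p_{v_j}\rangle=h_K(-v_j)$. Because $P$ is finite, pass to a subsequence on which $p_{v_j}=p$ is constant. Letting $j\to\infty$ gives $\langle w,p\rangle=h_K(w)$.

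Thus $h_{\conv P}=h_K$, so $K=\conv P$. Full-dimensionality then forces $P$ to consist of exactly $n+1$ affinely independent points, and $K$ is a simplex. Step~3, the normal-cone discussion, and Straszewicz's theorem can all be dropped.

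The paper runs this support-function step before it knows $P$ is finite. It uses $\langle w,p_{v_j}\rangle=h_K(-v_j)+\langle w+v_j,p_{v_j}\rangle\to h_K(w)$, which gives $K=\overline{\conv}(P)$ directly, and only then counts apices.
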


\begin{proof}
Let $P=\{p_v:v\in G\}$. Every point of $P$ is an extreme point of $K$: if $K=\conv(\{p\}\cup B)$ with $B$ in an affine hyperplane $H$ not containing $p$, choose an affine functional $a$ vanishing on $H$ with $a(p)=1$; then $0\leq a\leq1$ on $K$ and $\{a=1\}\cap K=\{p\}$, so $p$ is exposed, hence extreme.

We claim $K=\overline{\conv}(P)$. Let $w\in\mathbb R^n$ and choose $v_j\in G$ with $v_j\to-w$. Since $p_{v_j}$ minimizes $\langle v_j,\cdot\rangle$ on $K$, we have $\langle v_j,p_{v_j}\rangle=-h_K(-v_j)$, hence
\[
\langle w,p_{v_j}\rangle=h_K(-v_j)+\langle w+v_j,p_{v_j}\rangle\longrightarrow h_K(w),
\]
using continuity of $h_K$ and boundedness of $K$. Thus $\sup_{p\in P}\langle w,p\rangle=h_K(w)$ for every $w$, so $\overline{\conv}(P)$ and $K$ have the same support function and coincide.

Since $K$ is full-dimensional, $P$ affinely spans $\mathbb R^n$; choose affinely independent $p_0,\dots,p_n\in P$. For each $i$, fix a pyramid representation $K=\conv(\{p_i\}\cup B_i)$ with $B_i\subseteq H_i$ and $p_i\notin H_i$. Every extreme point $q$ of $K$ with $q\neq p_i$ lies in $B_i$: any point of $K\setminus(\{p_i\}\cup B_i)$ is of the form $(1-t)p_i+tb$ with $b\in B_i$ and $0<t<1$, hence not extreme. In particular, $H_i\supseteq\{p_j:j\neq i\}$, and since those $n$ points are affinely independent and span an affine hyperplane,
\[
H_i=\aff\{p_j:j\neq i\}.
\]
Suppose some $q\in P$ differs from all of $p_0,\dots,p_n$. Then $q\in H_i$ for every $i$. But in the affine coordinates determined by the affinely independent points $p_0,\dots,p_n$, the hyperplane $H_i$ is the vanishing locus of the $i$-th affine coordinate, and a point on all $n+1$ of them would have all affine coordinates zero, contradicting that affine coordinates sum to one. Hence $P=\{p_0,\dots,p_n\}$ and $K=\conv\{p_0,\dots,p_n\}$, an $n$-dimensional simplex.
\end{proof}

\subsection{The simplex theorem and the reduction}\label{subsec:reduction}

\begin{thm}\label{thm:simplex}
Let $K\subset\mathbb R^n$ be a full-dimensional compact convex body with barycenter $0$, $\intr(K)\cap\mathbb Z^n=\{0\}$, and $\vol(K)=(n+1)^n/n!$. Then $K$ is an $n$-dimensional simplex.
\end{thm}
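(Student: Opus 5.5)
The plan is to verify the hypotheses of Theorem~\ref{thm:densapex} with $G=\mathbb R^n\setminus E$, where $E$ is the countable union of proper linear hyperplanes from Proposition~\ref{prop:weights}. First, $G$ is dense. A countable union of proper hyperplanes is a closed-nowhere-dense union, so it is meager (and Lebesgue-null); hence its complement is dense by the Baire category theorem, or simply by measure.

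Next, fix $v\in G$. We need a pyramid representation of $K$ whose apex minimizes $\langle v,\cdot\rangle$; we produce it as follows.
\begin{enumerate}
\item Apply Theorem~\ref{thm:limitray} to the full sequence of positive integers. This gives a subsequence along which the toric rays $u_{k,t}$ converge to a torus-invariant ray $\psi_t$. In the $x$-coordinates, $V(x,t)=\psi_t(x)$ is finite and jointly convex on $\mathbb R^n\times[0,\infty)$, with $Z(t)=\int e^{-V(x,t)}\,dx=\vol(K)e^{-nt}$. Here the $d\nu$-integral reduces to the $dx$-integral since $d\theta$ has total mass one. These integrals are finite and positive, and $-\log Z(t)=-\log Z(0)+nt$.
\item Apply Proposition~\ref{prop:prekopa} with $b=n$. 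It yields $c\in\mathbb R^n$ with $\psi_t(x)=\psi_0(x-tc)+nt=\varphi(x-tc)+nt$, since $\psi_0=\varphi$.
\item This is exactly the hypothesis of Theorem~\ref{thm:pyramid} along the fixed subsequence. That theorem gives $c\neq0$, an apex $p=p_v$ with $C_0=\{p\}$, and $K=\conv(\{p\}\cup B)$. Here $B=\{y\in K:\ell(y)=n+1\}$, which is compact and convex and lies in the affine hyperplane $H_v=\{\ell=n+1\}$; this is a genuine hyperplane because $c\neq0$. Moreover $p\notin H_v$ since $\ell(p)=0$.
\item Theorem~\ref{thm:apex}, applied in the same situation, gives $\langle v,p\rangle=\min_K\langle v,\cdot\rangle$.
\end{enumerate}

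With these hypotheses verified, Theorem~\ref{thm:densapex} applies, and $K$ is an $n$-dimensional simplex.

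The only point needing care is the bookkeeping in steps 1 and 2. Convergence of the integrals to $\int e^{-\psi_t}$ and the identity $L(t)=nt$ are already part of Theorem~\ref{thm:limitray}, so the hypotheses of Proposition~\ref{prop:prekopa} should be read off directly from it. The remaining work is just confirming that Theorems~\ref{thm:pyramid} and~\ref{thm:apex} are invoked along the same fixed subsequence and the same $c$. I expect no real obstacle beyond this consistency check; the substantive work was done in the earlier results.
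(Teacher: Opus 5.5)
Your proposal is correct and follows the paper's proof essentially step for step. The sequence is the same: density of $G=\mathbb R^n\setminus E$, then Theorem~\ref{thm:limitray}, Proposition~\ref{prop:prekopa} with $b=n$, Theorems~\ref{thm:pyramid} and~\ref{thm:apex} along the same fixed subsequence, and finally Theorem~\ref{thm:densapex}. Your observation that $c\neq0$ is what makes $H_v=\{\ell=n+1\}$ a genuine affine hyperplane is a small point that the paper leaves implicit.
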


\begin{proof}
Fix a BB13 potential $\varphi$ of $K$ (Lemma~\ref{lem:potential}), so that the constructions of Sections~\ref{sec:gateway}--\ref{sec:rays} apply. Let $E$ be the countable union of proper hyperplanes of Proposition~\ref{prop:weights} and put $G=\mathbb R^n\setminus E$, a dense subset of $\mathbb R^n$: each hyperplane is Lebesgue-null, so the countable union $E$ is Lebesgue-null and its complement meets every nonempty open set. Fix $v\in G$. Proposition~\ref{prop:weights} produces the integer weights $q_{k,m}$, and Theorem~\ref{thm:limitray} produces, along a subsequence, a toric limit ray $\psi_t$ with $\psi_0=\varphi$, joint convexity in $(x,t)$, and $L(t)=nt$ for all $t$. In the logarithmic coordinates, $V(x,t)=\psi_t(x)$ is a finite convex function on $\mathbb R^n\times[0,\infty)$ whose exponential integrals
\[
Z(t)=\int_{\mathbb R^n}e^{-V(x,t)}\,dx=\int_Xe^{-\psi_t}\,d\nu
\]
are finite and positive with $-\log Z(t)=-\log Z(0)+nt$, by torus invariance and $L(t)=nt$. Proposition~\ref{prop:prekopa} supplies $c_v\in\mathbb R^n$ with $\psi_t(x)=\varphi(x-tc_v)+nt$. Theorem~\ref{thm:pyramid} then makes $K$ a pyramid $K=\conv(\{p_v\}\cup B_v)$ with apex $p_v$ and base $B_v$ contained in the affine hyperplane $H_v=\{y\in\mathbb R^n:\ell_v(y)=n+1\}$ not containing $p_v$, where $\ell_v(y)=n-\langle c_v,y\rangle$ is the affine function of Theorem~\ref{thm:pyramid} associated with $c_v$, and Theorem~\ref{thm:apex} gives $\langle v,p_v\rangle=\min_{y\in K}\langle v,y\rangle$. Thus the hypotheses of Theorem~\ref{thm:densapex} hold for the dense set $G$, and $K$ is a simplex.
\end{proof}

\begin{prop}\label{prop:reduction}
Let $K$ be as in Theorem~\ref{thm:simplex}. Then there is an invertible linear map $A\colon\mathbb R^n\to\mathbb R^n$ with $|\det A|=1$ and $K=AS_n$. Consequently, if every full-rank lattice $\Lambda\subset\mathbb R^n$ of determinant one (that is, of covolume one; the standard lattice notions are recalled at the beginning of Section~\ref{sec:critical}) with $\intr(S_n)\cap\Lambda=\{0\}$ equals $\mathbb Z^n$, then Theorem~\ref{thm:main} holds.
\end{prop}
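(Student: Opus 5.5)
By Theorem~\ref{thm:simplex}, $K=\conv\{v_0,\dots,v_n\}$ is an $n$-dimensional simplex. The plan is to write it as an affine image of $S_n$ and then use the barycenter and volume hypotheses to make the map linear with unit determinant.

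First, recall the vertices $s_0,\dots,s_n$ of $S_n$ from the proof of Lemma~\ref{lem:model}. The vectors $s_1-s_0,\dots,s_n-s_0$ are $(n+1)e_1,\dots,(n+1)e_n$, so they are linearly independent. Hence there is a unique affine bijection $\Phi(x)=Ax+b$ with $\Phi(s_i)=v_i$ for every $i$, and $A$ is invertible. Affine maps send simplices to simplices and barycenters to barycenters. By Lemma~\ref{lem:model} the barycenter of $S_n$ is $0$, so the barycenter of $K$ is $\Phi(0)=b$, and the hypothesis forces $b=0$. Thus $K=AS_n$. Moreover $\vol(K)=|\det A|\vol(S_n)$, and both volumes equal $(n+1)^n/n!$ by hypothesis and Lemma~\ref{lem:model}, so $|\det A|=1$.

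For the consequence, put $\Lambda=A^{-1}\mathbb Z^n$. It is a full-rank lattice of determinant $|\det A|^{-1}=1$. Since $\intr(K)=A\,\intr(S_n)$, we get $\intr(S_n)\cap\Lambda=A^{-1}\bigl(\intr(K)\cap\mathbb Z^n\bigr)=\{0\}$. The assumed uniqueness then gives $A^{-1}\mathbb Z^n=\mathbb Z^n$. So $A$ and $A^{-1}$ both map $\mathbb Z^n$ into itself, which means both have integer matrices. Then $\det A$ and $\det A^{-1}$ are integers whose product is $1$, so $A\in\GL_n(\mathbb Z)$ and $K=A\bigl((n+1)\Delta_n-(1,\dots,1)\bigr)$, which is Theorem~\ref{thm:main}.

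No step here is a real obstacle; the substance lies in Theorem~\ref{thm:simplex} and in the critical-lattice hypothesis. The only points needing care are two routine facts: affine maps preserve barycenters, which follows from the change of variables in the barycenter integral, and they commute with taking interiors, which holds because $\Phi$ is a homeomorphism.
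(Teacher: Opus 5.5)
Your proof is correct and follows the paper's argument. You realize $K$ as the image of $S_n$ under the vertex-matching affine map, use the barycenter hypothesis to remove the translation and the volume hypothesis to get $|\det A|=1$, and then apply the critical-lattice hypothesis to $A^{-1}\mathbb Z^n$. The only cosmetic difference is that you invoke affine equivariance of the integral barycenter directly, whereas the paper passes through the vertex-mean identity $\sum_i p_i=0$ and checks $As_0=p_0$ by hand.
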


\begin{proof}
By Theorem~\ref{thm:simplex}, $K$ is a simplex; write its vertices as $p_0,\dots,p_n$. The barycenter of a simplex is the arithmetic mean of its vertices (both the centroid and the vertex mean are affine-equivariant, and for $\Delta_n$ the identity $\int_{\Delta_n}x_i\,dx=\frac{1}{(n+1)!}=\frac{\vol(\Delta_n)}{n+1}$ verifies it directly), so $\sum_ip_i=0$. The vertices $s_0,\dots,s_n$ of $S_n$ (Lemma~\ref{lem:model}) likewise satisfy $\sum_is_i=0$, and $s_i-s_0=(n+1)e_i$ for $1\leq i\leq n$ is a basis of $\mathbb R^n$, as is $p_i-p_0$ by full-dimensionality. Define the linear map $A$ by $A(s_i-s_0)=p_i-p_0$; then
\[
As_0=-\frac{1}{n+1}\sum_{i=1}^nA(s_i-s_0)=-\frac{1}{n+1}\sum_{i=1}^n(p_i-p_0)=p_0,
\]
using $\sum_{i\geq1}p_i=-p_0$, and hence $As_i=p_i$ for every $i$ and $K=AS_n$ as convex hulls of corresponding vertices. Volumes give $\vol(K)=|\det A|\vol(S_n)$, and both volumes equal $(n+1)^n/n!$, so $|\det A|=1$.

For the consequence, put $\Lambda=A^{-1}\mathbb Z^n$, a full-rank lattice with $\det\Lambda=|\det A^{-1}|=1$. Since $A$ is a linear homeomorphism, $\intr(S_n)\cap\Lambda=A^{-1}\bigl(\intr(K)\cap\mathbb Z^n\bigr)=\{0\}$. The assumed critical-lattice statement gives $\Lambda=\mathbb Z^n$, that is, $A^{-1}\mathbb Z^n=\mathbb Z^n$; then the columns of $A^{-1}$ and of $A$ are integral, so $\det A$ and $\det A^{-1}$ are integers with product $1$, whence $A\in\GL_n(\mathbb Z)$ and $K=AS_n=A\bigl((n+1)\Delta_n-(1,\dots,1)\bigr)$, which is the conclusion of Theorem~\ref{thm:main}.
\end{proof}

\section{The critical lattice of the centered simplex}\label{sec:critical}

In this section, we prove the arithmetic half of Theorem~\ref{thm:main}: the standard lattice is the only determinant-one lattice avoiding the interior of the centered standard simplex. The theorem of this section has no established Fano-side counterpart; see the closing paragraph of Remark~\ref{rem:dictionary}. We recall the standard notions from the geometry of numbers (see, e.g., \cite[Sections~3 and~17]{GL87}): a full-rank lattice $\Lambda=M\mathbb Z^n\subset\mathbb R^n$, with $M$ an invertible matrix, has determinant (or covolume) $\det\Lambda=|\det M|$, the volume of a fundamental domain; a lattice \emph{avoids} an open set containing the origin if its only point in that set is the origin; and a lattice of minimal determinant among those avoiding $\intr(S_n)$ is a \emph{critical lattice} of $S_n$.

\begin{lem}\label{lem:cubeavoid}
Let $\Lambda\subset\mathbb R^n$ be a full-rank lattice with $\intr(S_n)\cap\Lambda=\{0\}$. Then $\Lambda$ contains no nonzero point of the open cube $(-1,1)^n$.
\end{lem}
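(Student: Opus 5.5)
The plan is the sign-flip argument announced in the sketch of the proof. Suppose $x\in\Lambda\setminus\{0\}$ lies in $(-1,1)^n$. Because $\Lambda$ is a lattice, $-x\in\Lambda$ as well. By Lemma~\ref{lem:model}, $\intr(S_n)=\{y:y_i>-1\text{ for all }i,\ \sum_iy_i<1\}$. Every coordinate of $x$ and of $-x$ already exceeds $-1$. So for at least one of $\pm x$ to lie in $\intr(S_n)$, we need $\sum_ix_i<1$ or $-\sum_ix_i<1$. That holds unless $|\sum_ix_i|\ge1$.

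The first step therefore disposes of the case $|\sum_ix_i|<1$: one of $\pm x$ is a nonzero lattice point in $\intr(S_n)$, which is a contradiction.

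For the remaining case, replacing $x$ by $-x$ we may assume $\sum_ix_i\ge1$. The natural move is to pass to other lattice vectors built from $x$ that stay in the cube but have smaller coordinate sum. I would try differences $x-y$ with $y\in\Lambda$, and multiples $2x$ are no good, so the better idea is a minimality argument. Among all nonzero points of $\Lambda\cap(-1,1)^n$, which form a finite set, choose $x$ minimizing $|\sum_ix_i|$. If $|\sum_ix_i|<1$ we are done as above.

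Otherwise, say $\sum_ix_i\ge1$, so $-x$ has coordinate sum at most $-1<1$, but $-x$ can fail to be in $\intr(S_n)$ only through its coordinates. Since $-x\in(-1,1)^n$, its coordinates all exceed $-1$, hence $-x\in\intr(S_n)$. So this case is actually immediate: one never needs the cube constraint on the sum, only on the coordinates, and the symmetric partner $-x$ always works.

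Rechecking the whole argument: for $x\in(-1,1)^n\setminus\{0\}$, both $x$ and $-x$ have all coordinates $>-1$. The sums $\sum_ix_i$ and $-\sum_ix_i$ cannot both be $\ge1$, since they add to $0$. Hence at least one of $\pm x$ satisfies every strict inequality defining $\intr(S_n)$ and is a nonzero point of $\Lambda\cap\intr(S_n)$, contradicting the hypothesis. This is the entire proof.

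The only step needing care is invoking the facet description of $\intr(S_n)$ from Lemma~\ref{lem:model}. There is no real obstacle.
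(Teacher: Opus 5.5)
Your final argument is correct and is the paper's sign-flip proof: $\sum_i x_i$ and $-\sum_i x_i$ cannot both be $\geq 1$, so one of $\pm x$ satisfies every strict inequality of Lemma~\ref{lem:model} (the paper picks the sign with $\sum_j z_j\leq 0$). The case split and the minimality detour in the middle are unnecessary, and your opening claim that the condition fails when $|\sum_i x_i|\geq 1$ is wrong, so the write-up should consist of your last paragraph alone.
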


\begin{proof}
Suppose $z\in\Lambda$ is nonzero with $|z_j|<1$ for every $j$. Replacing $z$ by $-z$ if necessary (which changes neither membership in $\Lambda$ nor membership in the open cube), we may assume $\sum_jz_j\leq0$. Then $z_j>-1$ for every $j$ and $\sum_jz_j\leq0<1$, so $z\in\intr(S_n)$ by the halfspace description of Lemma~\ref{lem:model}, contradicting $\intr(S_n)\cap\Lambda=\{0\}$.
\end{proof}

\begin{thm}[Critical-lattice uniqueness]\label{thm:critical}
Let $n$ be a positive integer and let $\Lambda\subset\mathbb R^n$ be a full-rank lattice of determinant one with $\intr(S_n)\cap\Lambda=\{0\}$. Then $\Lambda=\mathbb Z^n$.
\end{thm}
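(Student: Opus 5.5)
The plan is to reduce to Haj\'os's theorem on lattice cube tilings and then rule out every nontrivial shear by exhibiting an explicit lattice point in $\intr(S_n)$. The expected obstacle is the exact form in which Haj\'os's theorem is invoked.

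\textbf{Step 1: a lattice cube tiling.} By Lemma~\ref{lem:cubeavoid}, $\Lambda$ has no nonzero point in the open cube $C=(-1,1)^n$. This is equivalent to the translates $\lambda+\tfrac12 C$, $\lambda\in\Lambda$, being pairwise disjoint. Indeed, a common point of two translates yields a difference $\lambda-\lambda'\in C\setminus\{0\}$. Each translate has volume $1=\det\Lambda$, so the packing density is $\vol(\tfrac12C)/\det\Lambda=1$. Since the open half-cubes are disjoint with density one, the closed unit cubes $\lambda+[-\tfrac12,\tfrac12]^n$ cover $\mathbb R^n$. Their complement is open and has density zero, hence it is empty. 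So $\Lambda$ defines a lattice tiling of $\mathbb R^n$ by unit cubes.

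\textbf{Step 2: triangular form via Haj\'os.} Haj\'os's theorem \cite{Haj41}, in its lattice-basis form, says the following. After a permutation of coordinates, $\Lambda$ has a basis $b_1,\dots,b_n$ with $b_i=e_i+\sum_{j<i}a_{ji}e_j$, i.e. an upper unitriangular basis matrix. Since $S_n$ and its halfspace description in Lemma~\ref{lem:model} are invariant under permuting coordinates, we may assume this permutation is trivial. I expect this to be the main obstacle. The classical statement is that some cube in the tiling has a face-to-face column, and the unitriangular basis comes from a standard induction: pass to the quotient tiling of $\mathbb R^{n-1}$ obtained by collapsing along that column's direction. The reduction must be done carefully so that the permutation is compatible with $S_n$.

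\textbf{Step 3: induction showing $\Lambda=\mathbb Z^n$.} We prove by induction on $i$ that $\Lambda$ contains $e_1,\dots,e_i$. For $i=1$ this holds because $b_1=e_1$. Assume $e_1,\dots,e_{i-1}\in\Lambda$. Starting from $b_i$, subtract the integer combination $\sum_{j<i}\lceil a_{ji}\rceil e_j\in\Lambda$. This gives the lattice point
\[
b_i'=e_i+\sum_{j<i}w_je_j,\qquad w_j=a_{ji}-\lceil a_{ji}\rceil\in(-1,0].
\]
This is the shear point. Its coordinates are all greater than $-1$: they equal $w_j$, $1$, or $0$. Its coordinate sum is $1+\sum_jw_j$. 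If some $w_j\neq0$, this sum is $<1$, so $b_i'$ is a nonzero point of $\intr(S_n)$ by Lemma~\ref{lem:model}, contradicting the hypothesis. Hence all $w_j=0$, so $a_{ji}\in\mathbb Z$ and $e_i=b_i'\in\Lambda$.

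Thus $\mathbb Z^n\subseteq\Lambda$. Both lattices have determinant one, so the index $[\Lambda:\mathbb Z^n]$ equals $\det\mathbb Z^n/\det\Lambda=1$, and therefore $\Lambda=\mathbb Z^n$.
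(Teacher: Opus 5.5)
Your proof is correct and follows the paper's route: Lemma~\ref{lem:cubeavoid}, then Haj\'os's theorem in the upper-unitriangular-basis form, then the shear-point induction, then the index count. The paper quotes that form of Haj\'os's theorem directly for covolume-one lattices avoiding $(-1,1)^n$, via \cite[Theorem~4.2]{BGM+22}, so your Step~1 passage to a cube tiling is harmless but not needed.

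The only difference is cosmetic. Rounding up with $\lceil a_{ji}\rceil$ amounts to subtracting every $e_j$ whose coefficient has nonzero fractional part. The paper instead reduces to fractional parts in $[0,1)$ and subtracts only $\lfloor\sigma\rfloor+1$ of those $e_j$, where $\sigma$ is the sum of the fractional parts. Both work because $\sigma$ is strictly less than the number of nonzero fractional parts.
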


\begin{proof}
By Lemma~\ref{lem:cubeavoid}, $\Lambda$ contains no nonzero point of $(-1,1)^n$. We now invoke Haj\'os's theorem \cite[Satz~32]{Haj41}, which resolved Minkowski's conjecture on lattice cube tilings, in the following form stated in \cite[Theorem~4.2]{BGM+22}: if $\Gamma\subset\mathbb R^n$ is a lattice of covolume one avoiding the open unit cube $(-1,1)^n$, then, after a permutation of the coordinate axes, $\Gamma=U\mathbb Z^n$ for an upper triangular real matrix $U$ all of whose diagonal entries equal one. (Here, as usual in the geometry of numbers, ``avoiding'' refers to the nonzero lattice points; the origin lies in every lattice and in the open cube.) A permutation of coordinates preserves $S_n$, $\intr(S_n)$, $\mathbb Z^n$, and the determinant, so both the hypotheses and the desired conclusion are invariant under it, and we may assume $\Lambda=U\mathbb Z^n$ with $U$ upper unitriangular.

Let $b_k$ denote the $k$-th column of $U$. We prove by induction on $k$ that $e_1,\dots,e_k\in\Lambda$. For $k=1$, upper unitriangularity gives $b_1=e_1$. Assume $e_1,\dots,e_{k-1}\in\Lambda$ and write $b_k=e_k+\sum_{j<k}c_je_j$. Subtracting integer multiples of the available $e_j$ produces a lattice vector
\[
b_k'=e_k+\sum_{j<k}\alpha_je_j\in\Lambda,\qquad0\leq\alpha_j<1.
\]
Suppose some $\alpha_j$ is positive. Put $\sigma=\sum_{j<k}\alpha_j$, let $r$ be the number of positive $\alpha_j$, and put $m=\lfloor\sigma\rfloor+1$; since each positive $\alpha_j$ is strictly less than one, $\sigma<r$, so $1\leq m\leq r$. Choose a set $J$ of $m$ indices $j<k$ with $\alpha_j>0$ and define
\[
x=b_k'-\sum_{j\in J}e_j\in\Lambda.
\]
The $k$-th coordinate of $x$ is $1$; for $j\in J$, the $j$-th coordinate is $\alpha_j-1\in(-1,0)$; for $j<k$ outside $J$, it is $\alpha_j\geq0$; and all later coordinates vanish. Thus every coordinate of $x$ exceeds $-1$ strictly, while
\[
\sum_jx_j=1+\sigma-m<1
\]
because $m>\sigma$. By Lemma~\ref{lem:model}, $x\in\intr(S_n)$, and $x\neq0$ since its $k$-th coordinate is $1$, contradicting $\intr(S_n)\cap\Lambda=\{0\}$. Hence every $\alpha_j$ vanishes and $e_k=b_k'\in\Lambda$, completing the induction.

Therefore $\mathbb Z^n\subseteq\Lambda$; the index of this sublattice pair equals the ratio of the determinants, $[\Lambda:\mathbb Z^n]=\det(\mathbb Z^n)/\det(\Lambda)=1$, so $\Lambda=\mathbb Z^n$. Undoing the coordinate permutation leaves $\mathbb Z^n$ unchanged.
\end{proof}

\begin{rem}\label{rem:criticalcontrol}
By Lemma~\ref{lem:model}, the control lattice $\mathbb Z^n$ does satisfy $\intr(S_n)\cap\mathbb Z^n=\{0\}$, so Theorem~\ref{thm:critical} states precisely that $\mathbb Z^n$ is the unique determinant-one critical lattice of $S_n$. The value $1$ of the critical determinant itself is classical: it is the simplex case of Ehrhart's conjecture, proved in \cite{Ehr79}.
\end{rem}

\section{Proof of the main theorem}\label{sec:proof}

The goal of this section is to prove Theorem~\ref{thm:main}.

\begin{proof}[Proof of Theorem~\ref{thm:main}]
Let $K$ be as in the statement: a full-dimensional compact convex body with barycenter $0$, $\intr(K)\cap\mathbb Z^n=\{0\}$, and $\vol(K)=(n+1)^n/n!$. By Theorem~\ref{thm:simplex}, $K$ is a simplex, and by Proposition~\ref{prop:reduction} there is an invertible linear map $A$ with $|\det A|=1$ and $K=AS_n$; moreover, by the same proposition, it suffices to verify the critical-lattice statement for $S_n$, which is Theorem~\ref{thm:critical}. This gives $A\in\GL_n(\mathbb Z)$ and
\[
K=A\bigl((n+1)\Delta_n-(1,\dots,1)\bigr),
\]
completing the proof with $U=A$. Finally, the classification is sharp: $S_n$ satisfies all three hypotheses by Lemma~\ref{lem:model}, and for every $U\in\GL_n(\mathbb Z)$ so does $US_n$, since a linear map of determinant $\pm1$ preserves volume and fixes the barycenter at the origin, and since $\intr(US_n)\cap\mathbb Z^n=U\bigl(\intr(S_n)\cap\mathbb Z^n\bigr)=\{0\}$ because $U$ is a homeomorphism with $U\mathbb Z^n=\mathbb Z^n$. Hence every member of the classified family is an equality body.
\end{proof}

\end{document}